\newcounter{point}
\renewcommand{\leq}{\leqslant}
\renewcommand{\geq}{\geqslant}
\numberwithin{equation}{section}
\newcommand{\uple}[1]{\text{\boldmath${#1}$}}
\def\stacksum#1#2{{\stackrel{{\scriptstyle #1}}
{{\scriptstyle #2}}}}
\newcommand{\Cc}{\mathbf{C}}
\newcommand{\Aa}{\mathbf{A}}
\newcommand{\Zz}{\mathbf{Z}}
\newcommand{\Pp}{\mathbf{P}}
\newcommand{\Rr}{\mathbf{R}}
\newcommand{\Gg}{\mathbf{G}}
\newcommand{\Qq}{\mathbf{Q}}
\newcommand{\Fp}{{\mathbf{F}_p}}
\newcommand{\bFp}{\bar{\mathbf{F}}_p}
\newcommand{\Fpt}{{\mathbf{F}^\times_p}}
\newcommand{\bQl}{\bar{\Qq}_{\ell}}
\newcommand{\Tt}{\mathbf{T}}
\newcommand{\mods}[1]{\,(\mathrm{mod}\,{#1})}
\newcommand{\frfn}[1]{t_{{#1}}}
\DeclareMathOperator{\hypk}{Kl}
\newcommand{\HYPK}{\mathcal{K}\ell}
\DeclareMathOperator{\hypg}{Hyp}
\newcommand{\HYPG}{\sheaf{H}yp}
\newcommand{\ra}{\rightarrow}
\newcommand{\lra}{\longrightarrow}
\newcommand{\injecte}{\hookrightarrow}
\newcommand{\fleche}[1]{\stackrel{#1}{\lra}}
\DeclareMathOperator{\rank}{rank}
\DeclareMathOperator{\symk}{Sym}
\DeclareMathOperator{\frob}{\mathrm{Fr}}
\DeclareMathOperator{\Gal}{Gal}
\DeclareMathOperator{\Tr}{tr}
\DeclareMathOperator{\End}{End}
\DeclareMathOperator{\Aut}{Aut}
\DeclareMathOperator{\Autz}{Aut_0}
\DeclareMathOperator{\Autt}{Aut^{d}_0}
\DeclareMathOperator{\ft}{FT}
\DeclareMathOperator{\cond}{\mathbf{c}}
\DeclareMathOperator{\dual}{D}
\DeclareMathOperator{\std}{Std}
\newcommand{\eps}{\varepsilon}
\renewcommand{\rho}{\varrho}
\DeclareMathOperator{\cent}{Z}
\DeclareMathOperator{\SL}{SL}
\DeclareMathOperator{\GL}{GL}
\DeclareMathOperator{\PGL}{PGL}
\DeclareMathOperator{\Sp}{Sp}
\DeclareMathOperator{\SO}{SO}
\DeclareMathOperator{\Ort}{O}
\newcommand{\sheaf}[1]{\mathcal{{#1}}}
\newcommand{\tanna}[1]{\mathcal{T}({#1})}
\DeclareMathSymbol{\gena}{\mathord}{letters}{"3C}
\DeclareMathSymbol{\genb}{\mathord}{letters}{"3E}
\def\sums{\mathop{\sum \Bigl.^{*}}\limits}
\theoremstyle{plain}
\newtheorem{theorem}{Theorem}[section]
\newtheorem{lemma}[theorem]{Lemma}
\newtheorem{corollary}[theorem]{Corollary}
\newtheorem{proposition}[theorem]{Proposition}
\newtheorem*{proposition*}{Proposition}
\theoremstyle{remark}
\theoremstyle{definition}
\newtheorem{definition}[theorem]{Definition}
\newtheorem{example}[theorem]{Example}
\newtheorem{remark}[theorem]{Remark}
\renewcommand{\geq}{\geqslant}
\renewcommand{\leq}{\leqslant}
\begin{document}

\title{A study in sums of products} 
 
\author{\'Etienne Fouvry}
\address{Universit\'e Paris Sud, Laboratoire de Math\'ematique\\
  Campus d'Orsay\\ 91405 Orsay Cedex\\France}
\email{etienne.fouvry@math.u-psud.fr} \author{Emmanuel Kowalski}
\address{ETH Z\"urich -- D-MATH\\
  R\"amistrasse 101\\
  CH-8092 Z\"urich\\
  Switzerland} \email{kowalski@math.ethz.ch} \author{Philippe Michel}
\address{EPFL/SB/IMB/TAN, Station 8, CH-1015 Lausanne, Switzerland }
\email{philippe.michel@epfl.ch}

\date{\today,\ \thistime} 

\thanks{Ph. M. was partially supported by
  the SNF (grant 200021-137488) and the ERC (Advanced Research Grant
  228304). \'E. F. thanks ETH Z\"urich, EPF Lausanne and the Institut
  Universitaire de France for financial support.  }

\subjclass[2010]{11T23,14F20,11G20} 

\keywords{Trace functions, \'etale cohomology, conductor, monodromy,
  $\ell$-adic sheaves, Riemann Hypothesis over finite fields,
  exponential sums}

\begin{abstract}
  We give a general version of cancellation in exponential sums that
  arise as sums of products of trace functions satisfying a suitable
  independence condition related to the Goursat-Kolchin-Ribet
  criterion, in a form that is easily applicable in analytic number
  theory.
\end{abstract}

\maketitle

{\small{
\setcounter{tocdepth}{1}
\tableofcontents}
}

\section{Introduction}\label{sec-intro}

In many (perhaps surprisingly many) applications to number theory,
exponential sums over finite fields of the type
\begin{equation}\label{eq-sp}
  \sums_{x\in\Fp}K(\gamma_1\cdot x)\cdots K(\gamma_k\cdot
  x)  e\Bigl(\frac{hx}{p}\Bigr) 
\end{equation}
arise naturally, for some positive integer $k\geq 1$, where
\begin{itemize}
\item The function $K$ is a ``trace function'' over $\Fp$, of weight
  $0$, for instance
$$
K(x)=e\Bigl(\frac{f(x)}{p}\Bigr)
$$
for some fixed polynomial $f\in\Zz[X]$, a Kloosterman sum
$$
K(x)=\frac{1}{\sqrt{p}}\sum_{y\in\Fpt}e\Bigl(\frac{y^{-1}+xy}{p}\Bigr),
$$
or its generalization to hyper-Kloosterman sums
$$
K(x)=\hypk_r(x;p)=\frac{(-1)^{r-1}}{p^{(r-1)/2}} \sum_{t_1\cdots
  t_r=x} e\Bigl(\frac{t_1+\cdots+t_r}{p}\Bigr)
$$
for some $r\geq 2$;
\item For $1\leq i\leq k$, $\gamma_i\in \PGL_2(\Fp)$ acts on $\Fp$ by
  fractional linear transformation
$$
\begin{pmatrix}a&b\\c&d
\end{pmatrix}
\cdot x=\frac{ax+b}{cx+d},
$$
for instance $\gamma_i\cdot x=a_ix+b_i$ for some $a_i\in\Fpt$ and
$b_i\in\Fp$, and the sum is restricted to those $x\in\Fp$ which are
not poles of any of the $\gamma_i$;
\item Finally, $h\in\Fp$.
\end{itemize}

The goal is usually to prove, except in special ``diagonal'' cases, an
estimate of the type
$$
\sums_{x\in\Fp}K(\gamma_1\cdot x)\cdots K(\gamma_k\cdot x)
e\Bigl(\frac{hx}{p}\Bigr) \ll\sqrt{p},
$$
where the implied constant is independent of $p$ and $h$, when $K$ has
suitably bounded ``complexity''.
\par
Note that if $K(x)$ is a Kloosterman sum, or another similar
normalized exponential sum in one variable, then opening the sums
expresses~(\ref{eq-sp}) as a $(k+1)$-variable character sum, and
(because of the normalization) the goal becomes to have square-root
cancellation with respect to all variables.
\par
We emphasize that we do not assume that the $\gamma_i$ are
distinct. Furthermore, such sums also arise with some factors
$K(\gamma_i\cdot x)$ replaced with their conjugate
$\overline{K(\gamma_i\cdot x)}$, or indeed with factors $K_i(x)$ which
are not directly related.  Such cases will be also handled in this
paper.
\par
As a sample of situations where such sums have arisen, we note:
\begin{itemize}
\item In all known proofs of the Burgess estimate for short character
  sums, one has to deal with cases where $h=0$ and
  $K_i(x)=\chi(x+a_i)$ or $\overline{\chi(x+a_i)}$ for some
  multiplicative character $\chi$ (see, e.g.,~\cite[Cor. 11.24,
  Lem. 12.8]{ant});
\item Cases where $k=2$ and $\gamma_1$, $\gamma_2$ are diagonal are
  found in the thesis of Ph. Michel and his subsequent papers,
  e.g.~\cite{michel}; 
\item For $k=2$, $\gamma_1=1$, $h=0$, we obtain the general
  ``correlation sums'' (for the Fourier transform of $K$) defined
  in~\cite{FKM1}; these are crucial to our
  works~\cite{FKM1,FKM2,FKM3};
\item Special cases of this situation of correlation sums can be found
  (sometimes implicitly) in earlier works of Iwaniec~\cite{IwActa}, of
  Pitt~\cite{pitt} and of Munshi~\cite{munshi};
\item The case $k=2$, $\gamma_1$ and $\gamma_2$ diagonal, $h$
  arbitrary and $K$ a Kloosterman sum in two variables (or a variant
  with $K$ a Kloosterman sum in one variable and $\gamma_1$,
  $\gamma_2$ not upper-triangular) occurs in the work of Friedlander
  and Iwaniec~\cite{FrIw}, and it is also used in the work of
  Zhang~\cite{zhang} on gaps between primes;
\item Cases where $k$ is arbitrary, the $\gamma_i$ are
  upper-triangular and distinct, and $h$ may be non-zero appear in the
  work of Fouvry, Michel, Rivat and S\'ark\"ozy~\cite[Lemma
  2.1]{FMRS}, indeed in a form involving different trace functions
  $K_i(\gamma_i\cdot x)$ related to symmetric powers of Kloosterman
  sums;
\item The sums for $k$ arbitrary and $h=0$, with $K$ a
  hyper-Kloosterman sum appear in the works of Fouvry, Ganguly,
  Kowalski and Michel~\cite{FGKM} and Kowalski and Ricotta~\cite{kr}
  (with $\gamma_i$ diagonal);
\item This last case, but with arbitrary $h$ and the $\gamma_i$ being
  translations also appears in the work of Irving~\cite{irving}, and
  (for very different reasons) in work of Kowalski and
  Sawin~\cite{k-sawin};.
\item Another instance, with $k=4$, $h$ arbitrary and $\gamma_i$
  upper-triangular, occurs in the work of Blomer and
  Mili\'cevi\'c~\cite[\S 11]{bm}.
\end{itemize}

The principles arising from algebraic geometry and algebraic group
theory (in particular the so-called Goursat-Kolchin-Ribet criterion,
as developed by Katz), together with the general form of the Riemann
Hypothesis over finite fields of Deligne allow for \emph{square root
  cancellation} in such sums in (also possibly surprisingly) many
circumstances. However, this principle is not fully stated in a
self-contained manner in any reference. Thus, this paper is devoted to
a review (and expansion) of these principles. We have aimed to give
statements that can be quoted easily in applications, possibly with
some additional algebraic leg-work.
\par
As already mentioned, the sums~(\ref{eq-sp}) are not the only ``sums
of products'' that appear in applications: some sums which are not of
this type are found in~\cite[Lemma 2.1]{FMRS}, in the work of Fouvry
and Iwaniec~\cite{fouvry-iwaniec-div} (estimated by Katz in the
Appendix to that paper), and in work of Bombieri and
Bourgain~\cite{bombieri-bourgain} (estimated by Katz
in~\cite{katz-bb}). In this introduction, however, we state results
only in (a slightly more general form) of~(\ref{eq-sp}), referring to
Sections~\ref{sec-general} and~\ref{sec-applications} for the general
theory and some applications, both old and new.
\par
All estimates will be derived using, ultimately, the following
application of the Riemann Hypothesis over finite fields (see
Section~\ref{sec-proofs} for a detailed explanation):

\begin{proposition}\label{pr-rh}
  Let $k\geq 1$ and let $\uple{\sheaf{F}}=(\sheaf{F}_i)$ be any
  $k$-tuple of $\ell$-adic middle-extension sheaves on $\Aa^1_{\Fp}$
  such that the $\sheaf{F}_i$ are of weight $0$, and let $\sheaf{G}$
  be an $\ell$-adic middle-extension sheaf of weight $0$. Let $K_i$ be
  the trace function of $\sheaf{F}_i$ and $M$ that of
  $\sheaf{G}$. If\footnote{\ We denote $\dual(\sheaf{G})$ the
    middle-extension dual of $\sheaf{G}$, see the notation for
    details.}
\begin{equation}\label{eq-vanish}
  H^2_c(\Aa^1\times\bFp,\bigotimes_{i}\sheaf{F}_i\otimes\dual(\sheaf{G}))=0 
\end{equation}
then we have
$$
\Bigl|\sum_{x\in\Fp}
K_1(x)\cdots K_k(x)\overline{M(x)}
\Bigr|\leq C\sqrt{p},
$$
where $C\geq 0$ depends only on $k$ and on the conductors of
$\sheaf{F}_i$ and of $\sheaf{G}$.
\end{proposition}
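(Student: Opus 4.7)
The plan is to form the tensor product sheaf $\sheaf{H} = \bigotimes_{i=1}^{k} \sheaf{F}_i \otimes \dual(\sheaf{G})$ on $\Aa^1_{\Fp}$, which is mixed of weight $\leq 0$ since each factor is a middle extension of weight $0$, and to apply the Grothendieck--Lefschetz trace formula together with Deligne's Riemann Hypothesis. The trace formula reads
$$
\sum_{x \in \Fp} t_{\sheaf{H}}(x) = \sum_{i=0}^{2} (-1)^i \Tr\bigl(\frob_p \mid H^i_c(\Aa^1_{\bFp}, \sheaf{H})\bigr).
$$
The vanishing assumption \eqref{eq-vanish} kills the $H^2_c$ contribution. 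Up to a bounded error coming from any punctual constituents of $\sheaf{H}$ created at the singularities of the $\sheaf{F}_i$ and $\sheaf{G}$ (these contribute to $H^0_c$ but only to an extent controlled by the conductors), only the $H^1_c$ term remains, and Deligne's theorem bounds its Frobenius eigenvalues by $\sqrt{p}$.

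Next I would bound $\dim H^1_c(\Aa^1_{\bFp}, \sheaf{H})$ purely in terms of $k$ and the conductors of the $\sheaf{F}_i$ and of $\sheaf{G}$. The Euler--Poincar\'e formula expresses this dimension in terms of the generic rank of $\sheaf{H}$, the size of its singular support, and the sum of Swan conductors at each singularity (including $\infty$). Each of these invariants is controlled by the analogous quantities for the constituents: rank is sub-multiplicative, Swan is sub-additive under tensor product, and duality leaves the conductor unchanged. Putting this together yields $\cond(\sheaf{H}) \ll_k \prod_i \cond(\sheaf{F}_i) \cdot \cond(\sheaf{G})$, from which the desired dimension bound follows.

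The final step is to compare $\sum_{x \in \Fp} t_{\sheaf{H}}(x)$ with the target sum $\sum_{x} K_1(x) \cdots K_k(x) \overline{M(x)}$. At every point $x$ where all the $\sheaf{F}_i$ and $\sheaf{G}$ are lisse, the stalk of $\sheaf{H}$ at $x$ is the tensor product of the individual stalks, so the two sums agree termwise; one uses here that, for $\sheaf{G}$ a pure middle extension of weight $0$, one has $t_{\dual(\sheaf{G})}(x) = \overline{M(x)}$ on the lisse locus, because Frobenius acts on the dual by the inverse transpose and the eigenvalues have modulus $1$. At the finitely many remaining points, whose number is controlled by the singular loci of the $\sheaf{F}_i$ and $\sheaf{G}$ (hence by their conductors), both $t_{\sheaf{H}}(x)$ and the product $K_1(x)\cdots K_k(x)\overline{M(x)}$ are individually bounded by quantities depending only on the conductors, giving a total discrepancy of $O(1)$ that is absorbed into $C$. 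The main obstacle is precisely this bookkeeping at singular points: the tensor product $\sheaf{H}$ is not itself a middle extension, so both the identification of $t_{\sheaf{H}}$ with the pointwise product on the lisse locus and the control of punctual constituents and of the dual at ramified points must be handled with care, whereas the cohomological heart of the argument -- the trace formula plus Deligne plus Euler--Poincar\'e -- is standard.
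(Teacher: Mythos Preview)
Your proposal is correct and follows essentially the same strategy as the paper: Grothendieck--Lefschetz trace formula, the vanishing hypothesis for $H^2_c$, Deligne's Riemann Hypothesis for the $H^1_c$ eigenvalues, and the Euler--Poincar\'e formula to bound $\dim H^1_c$ in terms of conductors.

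The only organizational difference is that the paper first restricts to the maximal common lisse locus $U\subset\Aa^1$ of the $\sheaf{F}_i$ and $\sheaf{G}$, bounds the discrepancy between the full sum and the sum over $U(\Fp)$ by the product of the ranks times $|(\Aa^1-U)(\Fp)|$, and then applies the trace formula on $U$ rather than on $\Aa^1$. This sidesteps precisely the bookkeeping you flag as the main obstacle: on $U$ the tensor product is lisse, so $H^0_c$ vanishes automatically, there are no punctual constituents, and the trace function of the tensor product is exactly the pointwise product $K_1(x)\cdots K_k(x)\overline{M(x)}$. The birational invariance of $H^2_c$ ensures the vanishing hypothesis transfers from $\Aa^1$ to $U$. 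Your approach of working on $\Aa^1$ throughout and absorbing the punctual contributions into the constant is equally valid, just slightly heavier on details.
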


Thus, we will concentrate below on finding and explaining criteria
that ensure that the vanishing property~(\ref{eq-vanish}) holds,
deriving bounds for the corresponding sums from this
proposition. However, for convenience, we will state formally a number
of special cases of the resulting estimates.


We begin by defining a class of trace function $K$ for which we can
give a general estimate for~(\ref{eq-sp}).

\begin{definition}[Bountiful sheaves]\label{def-gen-sheaf}
  We say that an $\ell$-adic sheaf $\sheaf{F}$ on $\Aa^1_{\Fp}$ is
  \emph{bountiful} provided the following conditions hold:
\begin{itemize}
\item The sheaf $\sheaf{F}$ is a middle extension, pointwise pure of
  weight $0$, of rank $r\geq 2$;
\item The geometric monodromy group of $\sheaf{F}$ is equal to either
  $\SL_r$ or $\Sp_{r}$
  (we will say that $\sheaf{F}$ is of $\SL_r$-type, or
  $\Sp_r$-type, respectively);
\item The projective
  automorphism group
\begin{equation}\label{eq-autz}
\Autz(\sheaf{F})=\{\gamma\in\PGL_2(\bFp)\,\mid\,
\gamma^*\sheaf{F}\simeq \sheaf{F}\otimes \sheaf{L}\text{ for some rank
  $1$ sheaf }\sheaf{L}\}
\end{equation}
of $\sheaf{F}$ is trivial.
\end{itemize}
\end{definition}

If $\sheaf{F}$ is of $\SL_r$-type, we will also need to understand the
set
$$
\Autt(\sheaf{F})=\{\gamma\in\PGL_2(\bFp)\,\mid\,
\gamma^*\sheaf{F}\simeq \dual(\sheaf{F})\otimes \sheaf{L}\text{ for
  some rank $1$ sheaf }\sheaf{L}\},
$$
which we define for any middle-extension $\ell$-adic sheaf
$\sheaf{F}$.
\par
This definition implies that $\Autz(\sheaf{F})$ acts on
$\Autt(\sheaf{F})$ by left-multiplication: for elements
$\gamma\in\Autz(\sheaf{F})$ and $\gamma_1\in\Autt(\sheaf{F}$, we have
$\gamma_1\gamma\in\Autt(\sheaf{F})$. This action is simply transitive
(if $\gamma_1$, $\gamma_2\in \Autt(\sheaf{F})$, we get
$\gamma=\gamma_2\gamma_1^{-1}\in\Autz(\sheaf{F})$ with
$\gamma_2=\gamma\gamma_1$). This means that $\Autt(\sheaf{F})$ is
either empty or is a right coset $\xi\Autz(\sheaf{F})$ of
$\Autz(\sheaf{F})$.
\par
There is another extra property: if $\gamma\in\Autt(\sheaf{F})$, the
fact that $\dual(\dual(\sheaf{F}))\simeq\sheaf{F}$ implies that
$\gamma^2\in\Autz(\sheaf{F})$.
\par
In particular,\footnote{\ See Lemma~\ref{lm-special-coset} for a more
  general statement, based on these properties, that limits the
  possible structure of $\Autt(\sheaf{F})$.} for a sheaf with
$\Autz(\sheaf{F})=1$ (e.g., a bountiful sheaf), there are only two
possibilities: either $\Autt(\sheaf{F})$ is empty, or it contains a
single element $\xi_{\sheaf{F}}$, and the latter is an involution:
$\xi_{\sheaf{F}}^2=1$. If this second case holds, we say that
$\xi_{\sheaf{F}}$ is the \emph{special involution} of $\sheaf{F}$.
(For instance, we will see that for hyper-Kloosterman sums $\HYPK_r$
with $r$ odd, there is a special involution which is $x\mapsto -x$). 
\par
The diagonal cases, where there is no cancellation in~(\ref{eq-sp}),
will be classified by means of the following combinatorial
definitions: 

\begin{definition}[Normal tuples]\label{def-normal}
  Let $p$ be a prime, $k\geq 1$ an integer, $\uple{\gamma}$ a
  $k$-tuple of $\PGL_2(\bFp)$ and $\uple{\sigma}$ a $k$-tuple of
  $\Gal(\Cc/\Rr)=\{1,c\}$, where $c$ is complex conjugation.
\par
(1) We say that $\uple{\gamma}$ is \emph{normal} if there exists some
$\gamma\in\PGL_2(\bFp)$ such that
$$
|\{1\leq i\leq k\,\mid\, \gamma_i=\gamma\}|
$$
is odd. 
\par
(2) If $r\geq 3$ is an integer, we say that
$(\uple{\gamma},\uple{\sigma})$ is \emph{$r$-normal} if there exists
some $\gamma\in\PGL_2(\bFp)$ such that
$$
|\{1\leq i\leq k\,\mid\, \gamma_i=\gamma\}|\geq 1
$$
and
$$
|\{1\leq i\leq k\,\mid\, \gamma_i=\gamma\text{ and } \sigma_i=1 \}|-
|\{1\leq i\leq k\,\mid\, \gamma_i=\gamma\text{ and } \sigma_i\not=1
\}|\not\equiv 0\mods{r}.
$$
\par
(3) If $r\geq 3$ is an integer, and $\xi\in\PGL_2(\bFp)$ is a given
involution, we say that $(\uple{\gamma},\uple{\sigma})$ is
\emph{$r$-normal with respect to $\xi$} if there exists some
$\gamma\in\PGL_2(\bFp)$ such that
$$
|\{1\leq i\leq k\,\mid\, \gamma_i=\gamma\}|\geq 1
$$
and
\begin{equation}\label{eq-r-normal}
\Bigl(\sum_{\stacksum{1\leq i\leq k}{(\gamma_i,\sigma_i)=(\gamma,1)}}1
+
\sum_{\stacksum{1\leq i\leq k}{(\gamma_i,\sigma_i)=(\xi\gamma,c)}}1\Bigr)
-
\Bigl(\sum_{\stacksum{1\leq i\leq k}{(\gamma_i,\sigma_i)=(\gamma,c)}}1
+\sum_{\stacksum{1\leq i\leq k}{(\gamma_i,\sigma_i)=(\xi\gamma,1)}}1\Bigr)
\not\equiv 0\mods{r}.
\end{equation}
\end{definition}

\begin{example}
  (1) The basic example of a pair $(\uple{\gamma},\uple{\sigma})$
  which is not $r$-normal arises when $k$ is even and it is of the
  form
$$
((\gamma_1,\gamma_1,\ldots,\gamma_{k/2},\gamma_{k/2}),(1,c,\ldots,
1,c))
$$
since we then have
$$
|\{1\leq i\leq k\,\mid\, \gamma_i=\gamma\text{ and } \sigma_i=1 \}|
=|\{1\leq i\leq k\,\mid\, \gamma_i=\gamma\text{ and } \sigma_i=c\}|
$$
for any $\gamma\in \{\gamma_1,\ldots,\gamma_{k/2}\}$.
\par
(2) Let $\xi\in\PGL_2(\bFp)$ be an involution. Some basic examples of
pairs $(\uple{\gamma},\uple{\sigma})$ which are not $r$-normal with
respect to $\xi$ are the following:
\begin{itemize}
\item If $k$ is even, pairs
$$
((\gamma_1,\xi\gamma_1,\ldots,\gamma_{k/2},\xi\gamma_{k/2}),(1,1,\ldots,
1,1))
$$
(for instance, if the $\gamma_i$ are distinct, the left-hand side
of~(\ref{eq-r-normal}) is then
$$
(1+0)-(0+1)=0
$$
for each $\gamma\in\{\gamma_1,\ldots,\gamma_{k/2}\}$),
\item For $r=3$, $k=7$, pairs
$$
((\gamma,\xi\gamma,\xi\gamma,\gamma,\gamma,\xi\gamma,\gamma),
 (1,     c,        c,        c,     1,     1,        1))
$$
where the left-hand side of~(\ref{eq-r-normal}) for $\gamma$
(resp. $\xi\gamma$) is
$$
(3+2)-(1+1)=3\equiv 0\mods{3}\quad\quad
\text{(resp. $(1+1)-(2+3)=-3$).}
$$
\end{itemize}
\end{example}

After these definitions, we have first an abstract statement, from
which estimates follow immediately from Proposition~\ref{pr-rh}. In
this statement, for a sheaf $\sheaf{F}$ and $\sigma\in\Aut(\Cc/\Rr)$,
we denote $\sheaf{F}^{\sigma}=\sheaf{F}$ if $\sigma$ is the identity,
and $\sheaf{F}^{\sigma}=\dual(\sheaf{F})$ if $\sigma=c$ is complex
conjugation.

\begin{theorem}[Abstract sums of products]\label{th-main1}
  Let $p$ be a prime and let $\sheaf{F}$ be a bountiful $\ell$-adic
  sheaf on $\Aa^1_{\Fp}$.
\par
\emph{(1)} Assume that $\sheaf{F}$ is of $\Sp_r$-type. For every
$k\geq 1$, every $k$-tuple $\uple{\gamma}$ of elements in
$\PGL_2(\bFp)$, and every $h\in\Fp$, we have
$$
H^2_c(\Aa^1\times\bFp, \bigotimes_{1\leq i\leq k}
\gamma_i^*\sheaf{F}\otimes \sheaf{L}_{\psi(hX)})=0
$$
provided that either $\uple{\gamma}$ is normal or that $h\not=0$.
\par
\emph{(2)} Assume that $\sheaf{F}$ is of $\SL_r$-type. For every
$k\geq 1$, for all $k$-tuples $\uple{\gamma}$ of elements of
$\PGL_2(\bFp)$ and $\uple{\sigma}$ of elements of $\Aut(\Cc/\Rr)$, and
for all $h\in\Fp$, we have
$$
H^2_c(\Aa^1\times\bFp, \bigotimes_{1\leq i\leq k}
\gamma_i^*(\sheaf{F}^{\sigma})\otimes \sheaf{L}_{\psi(hX)})=0
$$
provided that either $h\not=0$, or that $h=0$ and either
\begin{itemize}
\item $\sheaf{F}$ has no special involution, and
  $(\uple{\gamma},\uple{\sigma})$ is $r$-normal;
\item $\sheaf{F}$ has a special involution $\xi$, $p>r$, and
  $(\uple{\gamma},\uple{\sigma})$ is $r$-normal with respect to $\xi$.
\end{itemize}
\end{theorem}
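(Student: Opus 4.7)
\emph{Proof proposal.} The plan is to reduce the vanishing of $H^2_c$ to the non-existence of a specific rank-$1$ subsheaf of
$$
\sheaf{H}_0 = \bigotimes_{1 \leq i \leq k} \gamma_i^* \sheaf{F}^{\sigma_i},
$$
and to control such subsheaves via the Goursat--Kolchin--Ribet (GKR) criterion applied to the bountiful sheaf $\sheaf{F}$.  Picking an open dense $U \subset \Aa^1_{\Fp}$ on which all factors are lisse, the standard identification $H^2_c(\bar U, \sheaf{H}) \simeq \sheaf{H}_{\pi_1^{\mathrm{geom}}}(-1)$ reformulates the stated vanishing as the absence, in $\sheaf{H}_0$, of a geometric subsheaf isomorphic to $\sheaf{L}_{\psi(-hX)}$.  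This target is the constant sheaf when $h = 0$ and is wildly ramified at $\infty$ when $h \neq 0$.

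I would next partition the pairs $(\gamma_i,\sigma_i)$ into equivalence classes, where $(\gamma_i,\sigma_i)\sim (\gamma_j,\sigma_j)$ iff $\gamma_i^*\sheaf{F}^{\sigma_i}$ is geometrically isomorphic, up to a rank-$1$ twist, to either $\gamma_j^*\sheaf{F}^{\sigma_j}$ or its dual.  The bountiful condition $\Autz(\sheaf{F})=1$, combined with the classification of $\Autt(\sheaf{F})$, makes these classes explicit: for $\Sp_r$-type, self-duality of $\sheaf{F}$ up to twist makes $\sigma$ irrelevant, and the classes are indexed by the distinct values of the $\gamma_i$; for $\SL_r$-type without special involution, the class of $(\gamma,1)$ is $\{(\gamma,1),(\gamma,c)\}$; for $\SL_r$-type with special involution $\xi$, the class of $(\gamma,1)$ is $\{(\gamma,1),(\xi\gamma,c),(\gamma,c),(\xi\gamma,1)\}$.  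Choosing a representative $\sheaf{G}_C$ in each class, one obtains a geometric isomorphism
$$
\sheaf{H}_0 \simeq \sheaf{L} \otimes \bigotimes_C \sheaf{G}_C^{\otimes a_C} \otimes \dual(\sheaf{G}_C)^{\otimes b_C}
$$
for some rank-$1$ sheaf $\sheaf{L}$, where $a_C$ and $b_C$ count the factors in class $C$ whose pullback is isomorphic (up to a rank-$1$ twist) to $\sheaf{G}_C$ and to $\dual(\sheaf{G}_C)$ respectively.  The GKR criterion, whose pairwise non-isomorphism hypothesis is precisely encoded by the equivalence classes, identifies the geometric monodromy of the direct sum $\bigoplus_C \sheaf{G}_C$ with the full product $\prod_C G_C$, each $G_C$ equal to $\SL_r$ or $\Sp_r$.

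The rest is representation-theoretic.  The geometric monodromy on $\sheaf{H}_0$ is a quotient of $\prod_C G_C$ via the tensor-product action, hence is connected and semisimple and has trivial abelianization; consequently every $1$-dimensional $\pi_1^{\mathrm{geom}}$-subrepresentation of $\sheaf{H}_0$ is geometrically trivial as a sheaf.  No such subsheaf can equal $\sheaf{L}_{\psi(-hX)}$ when $h \neq 0$, which settles the $h \neq 0$ case of both parts.  When $h = 0$, a geometrically trivial subsheaf of $\sheaf{H}_0$ corresponds to an invariant vector in $\bigotimes_C \sheaf{G}_C^{\otimes a_C}\otimes\dual(\sheaf{G}_C)^{\otimes b_C}$, which requires the central character of every $C$-factor to be trivial.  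For $\Sp_r$ this central character is $(-1)^{a_C+b_C}$ on $\{\pm 1\}$, and triviality on every $C$ is exactly the failure of $\uple{\gamma}$ to be normal; for $\SL_r$ it is $\zeta\mapsto\zeta^{a_C-b_C}$ on $\mmu_r$, and triviality on every $C$ is exactly the failure of $(\uple{\gamma},\uple{\sigma})$ to be $r$-normal (or $r$-normal with respect to $\xi$, when a special involution exists).

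The main obstacle, in my view, will be the clean invocation of the GKR criterion: one must verify that the bountiful axioms together with the analysis of $\Autt(\sheaf{F})$ yield precisely the pairwise non-isomorphism (up to twist and duality) required by Katz's formulation of the criterion, and, in the $\SL_r$-case with a special involution, check that the counts $(a_C,b_C)$ issuing from the equivalence classes correspond exactly to the signed count in~\eqref{eq-r-normal}.
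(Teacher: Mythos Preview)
Your approach is essentially the same as the paper's: partition the factors into equivalence classes governed by $\Autz(\sheaf{F})$ and $\Autt(\sheaf{F})$, apply the Goursat--Kolchin--Ribet criterion to the resulting generous tuple, and translate non-vanishing of $H^2_c$ into a representation-theoretic constraint on the central characters of $\SL_r$ or $\Sp_r$. The match with the definitions of normality is exactly as you describe.

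There is, however, a genuine gap in your handling of the rank-$1$ twist $\sheaf{L}$. You correctly write
\[
\sheaf{H}_0 \;\simeq\; \sheaf{L}\otimes \bigotimes_C \sheaf{G}_C^{\otimes a_C}\otimes \dual(\sheaf{G}_C)^{\otimes b_C},
\]
but then assert that the geometric monodromy of $\sheaf{H}_0$ is a quotient of $\prod_C G_C$, hence connected semisimple, so that every rank-$1$ subsheaf of $\sheaf{H}_0$ is geometrically trivial. This is only true of the second tensor factor; for $\sheaf{H}_0$ itself, every rank-$1$ geometric subsheaf is isomorphic to $\sheaf{L}$, not to $\bQl$. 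In the $\Sp_r$ case and in the $\SL_r$ case without special involution one has $\sheaf{L}\simeq\bQl$ and your argument goes through, but in the special-involution case the twist $\sheaf{L}$ is a tensor product of pullbacks of the rank-$1$ sheaf appearing in $\xi^*\sheaf{F}\simeq \dual(\sheaf{F})\otimes\sheaf{L}'$, and has order dividing $r$ but need not be trivial. For $h\neq 0$ you must then exclude $\sheaf{L}\simeq \sheaf{L}_{\psi(-hX)}$; since $\sheaf{L}_{\psi(-hX)}$ has order $p$ while $\sheaf{L}$ has order dividing $r$, this is exactly where the hypothesis $p>r$ enters, and you never invoke it. Similarly, for $h=0$ a trivial subsheaf of $\sheaf{H}_0$ forces $\sheaf{L}\simeq\bQl$ \emph{in addition to} the invariant-vector condition you state; you need to say why $\sheaf{L}$ is trivial (or why its possible non-triviality does not affect the conclusion). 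Fixing this is straightforward once noticed, but as written the $h\neq 0$ case with special involution is not covered.
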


To be concrete, we get:

\begin{corollary}[Bountiful sums of products]\label{cor-concrete}
  Let $p$ be a prime and let $K$ be the trace function modulo $p$ of a
  bountiful sheaf $\sheaf{F}$ with conductor $c$. Then, for any $k\geq
  1$, there exists a constant $C=C(k,c)$ depending only on $c$ and $k$
  such that:
\par
\emph{(1)} If $\sheaf{F}$ is self-dual, so that $K$ is real-valued,
then for any $k$-tuple $\uple{\gamma}$ of elements of $\PGL_2(\bFp)$
and for any $h\in\Fp$, provided that \emph{either} $\uple{\gamma}$ is
normal, \emph{or} $h\not=0$, we have
$$
\Bigl| \sums_{x\in\Fp}K(\gamma_1\cdot x)\cdots K(\gamma_k\cdot x)
e\Bigl(\frac{hx}{p}\Bigr) \Bigr| \leq C\sqrt{p}.
$$
\par
\emph{(2)} If $\sheaf{F}$ is of $\SL_r$-type with $r\geq 3$, and
$p>r$, then for $k$-tuples $\uple{\gamma}$ of elements of
$\PGL_2(\bFp)$ and $\uple{\sigma}$ of $\Aut(\Cc/\Rr)$, and for any
$h\in\Fp$, provided either that $(\uple{\gamma},\uple{\sigma})$ is
$r$-normal, or $r$-normal with respect to the special involution of
$\sheaf{F}$, if it exists, or that $h\not=0$, we have
$$
\Bigl| \sums_{x\in\Fp}K(\gamma_1\cdot x)^{\sigma_1}\cdots
K(\gamma_k\cdot x)^{\sigma_k} e\Bigl(\frac{hx}{p}\Bigr) \Bigr| \leq
C\sqrt{p}.
$$
\end{corollary}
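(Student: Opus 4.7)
The plan is to derive Corollary~\ref{cor-concrete} as a direct translation of Theorem~\ref{th-main1} into the language of trace functions, via Proposition~\ref{pr-rh}. First I would set, for each $i$, $\sheaf{F}_i = \gamma_i^*(\sheaf{F}^{\sigma_i})$ and $\sheaf{G} = \sheaf{L}_{\psi(-hX)}$; since $\sheaf{F}$ is a middle extension pointwise pure of weight $0$, the trace function of $\dual(\sheaf{F})$ on the lisse locus is the complex conjugate of $K$, and pulling back by $\gamma_i$ yields the trace function $K(\gamma_i \cdot x)^{\sigma_i}$ at all $x$ where $\gamma_i \cdot x$ is a point of lissity. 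The trace function $M$ of $\sheaf{G}$ is $e(-hx/p)$, so $\overline{M(x)} = e(hx/p)$, and $\dual(\sheaf{G}) = \sheaf{L}_{\psi(hX)}$.

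Next I would verify the cohomological vanishing hypothesis of Proposition~\ref{pr-rh}, namely
$$
H^2_c\Bigl(\Aa^1 \times \bFp, \bigotimes_{i} \gamma_i^*(\sheaf{F}^{\sigma_i}) \otimes \sheaf{L}_{\psi(hX)}\Bigr) = 0.
$$
For part (1) of the corollary, $\sheaf{F}$ is self-dual, which (since the standard representation of $\SL_r$ is not self-dual for $r \geq 3$) forces the $\Sp_r$-type case; then $K$ is real-valued and $K(\gamma_i \cdot x)^{\sigma_i} = K(\gamma_i \cdot x)$, so after replacing each $\sheaf{F}^{\sigma_i}$ by $\sheaf{F}$, the vanishing is furnished by Theorem~\ref{th-main1}(1). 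For part (2), the vanishing comes directly from Theorem~\ref{th-main1}(2), in whichever of the two regimes (no special involution, or special involution $\xi$) is in force, using the hypothesis $p > r$ when $\xi$ is present.

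Proposition~\ref{pr-rh} then yields a bound $\leq C\sqrt{p}$ with $C$ depending on $k$ and on the conductors of the sheaves involved. I would verify these conductors depend only on $k$ and $c$: pullback by $\gamma_i \in \PGL_2(\bFp)$ permutes the singular locus on $\Pp^1$ and preserves both the rank and the Swan conductors, as does middle-extension duality, so $\cond(\gamma_i^*(\sheaf{F}^{\sigma_i}))$ is bounded in terms of $c$, and $\cond(\sheaf{L}_{\psi(hX)}) \leq 3$. The discrepancy between the starred sum (excluding the at most $k$ poles of the $\gamma_i$) and the full sum over $\Fp$ in Proposition~\ref{pr-rh} contributes $O_{k,c}(1)$ at each excluded point and is absorbed into $C$. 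The main obstacle here is essentially bookkeeping: all the algebraic substance lies in Theorem~\ref{th-main1}, and what remains is a clean unpacking combined with Deligne's Riemann Hypothesis as packaged by Proposition~\ref{pr-rh}.
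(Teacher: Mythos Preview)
Your proposal is correct and follows exactly the paper's approach: the paper states that Corollary~\ref{cor-concrete} is an immediate consequence of Theorem~\ref{th-main1} (supplying the vanishing of $H^2_c$) combined with Proposition~\ref{pr-rh} (converting that vanishing into the square-root bound). You have simply unpacked the bookkeeping---the identification of $\sheaf{F}_i$ and $\sheaf{G}$, the conductor control under pullback by $\gamma_i$ and duality, and the handling of the finitely many poles---which the paper leaves implicit.
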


This is intuitively best possible, because if $\sheaf{F}$ is self-dual
and $\uple{\gamma}$ is not normal, so that the distinct elements
$\gamma_j$ in $\gamma$ appear each with even multiplicity $2n_j$, we
get for $h=0$ the sum
$$
 \sums_{x\in\Fp}\prod_{j}K(\gamma_j\cdot x)^{2n_j}
$$
in which there is no cancellation to be expected. The corresponding
optimality holds for sheaves of $\SL_r$-type, but this is less
obvious.
\par
It is sometimes important to determine even in this case what is the
main term that may arise (e.g., in~\cite{FGKM,kr}, this allows one to
identify the main term in a central limit theorem).  This is given by
the following statements.

\begin{corollary}\label{cor-concrete2}
  Let $p$ be a prime and let $K$ be the trace function modulo $p$ of a
  bountiful sheaf $\sheaf{F}$ with conductor $c$. Assume furthermore:
\begin{itemize}
\item  That the arithmetic monodromy group of $\sheaf{F}$ is equal to the
  geometric monodromy group,
\item If $\sheaf{F}$ is of $\SL_r$-type and has a special involution
  $\xi$, that
$$
\xi^*\sheaf{F}\simeq \dual(\sheaf{F}).
$$
\end{itemize}
\par
Then, for any $k\geq 1$, there exists a constant $C=C(k,c)$ depending
only on $c$ and $k$ such that:
\par
\emph{(1)} If $\sheaf{F}$ is of $\Sp_{2g}$-type, then for any
$k$-tuple $\uple{\gamma}$ of elements of $\PGL_2(\bFp)$ which is not
normal and for any $h\in\Fp$, there exists an integer
$m(\uple{\gamma})\geq 1$ such that
$$
\Bigl| \sums_{x\in\Fp}K(\gamma_1\cdot x)\cdots K(\gamma_k\cdot
x)-m(\uple{\gamma}) p \Bigr| \leq C\sqrt{p}.
$$
\par
If $k$ is even and $\uple{\gamma}$ consists of pairs of $k/2$ distinct
elements, then $m(\uple{\gamma})=1$. 
In general,
$$
m(\uple{\gamma})=\prod_{\gamma\in\uple{\gamma}} A(n_{\gamma})
$$
where $\gamma$ runs over all elements occuring in the tuple
$\uple{\gamma}$, $n_{\gamma}$ is the multiplicity of $\gamma$ in the
tuple and $A(n)$ is the multiplicity of the trivial representation of
$\Sp_{2g}$ in the $n$-th tensor power of the standard representation
of $\Sp_{2g}$.
\par
\emph{(2)} If $\sheaf{F}$ is of $\SL_r$-type with $r\geq 3$, then for
$k$-tuples $\uple{\gamma}$ of elements of $\PGL_2(\bFp)$ and
$\uple{\sigma})$ of $\Aut(\Cc/\Rr)$, such that
$(\uple{\gamma},\uple{\sigma})$ is not $r$-normal, or not $r$-normal
with respect to the special involution of $\sheaf{F}$ if it exists,
there exists an integer $m(\uple{\gamma},\uple{\sigma})\geq 1$ such
that
$$
\Bigl| \sums_{x\in\Fp}K(\gamma_1\cdot x)^{\sigma_1}\cdots
K(\gamma_k\cdot x)^{\sigma_k} -m(\uple{\gamma},\uple{\sigma})p\Bigr| \leq
C\sqrt{p}.
$$
\par
If $k$ is even, $\uple{\gamma}$ consists of $k/2$ pairs of elements
which are distinct or distinct modulo the special involution if it
exists, and for each such pair $(\gamma_i,\gamma_j)$, one of
$\sigma_i$ is the identity and the other is $c$, then
$m(\uple{\gamma},\uple{\sigma})=1$. Otherwise,
$m(\uple{\gamma},\uple{\sigma})$ is bounded in terms of $k$ and $r$
only.
\end{corollary}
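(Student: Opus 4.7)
The plan is to extract the main term in these non-normal cases from the trace of Frobenius on the top compactly-supported cohomology group. The Grothendieck--Lefschetz trace formula rewrites the sum as
$$
\sum_{i=0}^{2} (-1)^i \Tr\bigl(\Frob_p \mid H^i_c(\Aa^1 \times \bFp, \sheaf{G})\bigr),
$$
where $\sheaf{G} = \bigotimes_{i=1}^{k} \gamma_i^*(\sheaf{F}^{\sigma_i})$. By Deligne's Riemann Hypothesis together with the standard conductor estimate, the $H^0_c$ and $H^1_c$ contributions are $O(\sqrt{p})$ with implicit constant depending only on $k$ and the conductor of $\sheaf{F}$, exactly as in the proof of Proposition~\ref{pr-rh}. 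The task is thus to evaluate the top term.

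Since $H^2_c(\Aa^1 \times \bFp, \sheaf{G})$ is isomorphic to the geometric coinvariants $\sheaf{G}_{\pi_1^{\mathrm{geom}}}$ twisted by $-1$, and since the reductivity of $\SL_r$ and $\Sp_{2g}$ ensures invariants coincide with coinvariants, the hypothesis that the arithmetic and geometric monodromy groups of $\sheaf{F}$ are equal forces $\Frob_p$ to act trivially on this coinvariant space. The Tate twist then yields
$$
\Tr\bigl(\Frob_p \mid H^2_c\bigr) = p \cdot \dim \sheaf{G}^{\pi_1^{\mathrm{geom}}},
$$
so $m(\uple{\gamma}, \uple{\sigma})$ must equal $\dim \sheaf{G}^{\pi_1^{\mathrm{geom}}}$.

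To compute this invariant dimension, I group the indices $i$ by geometric isomorphism class of the pullback $\gamma_i^*(\sheaf{F}^{\sigma_i})$. In the $\Sp_{2g}$-type case, self-duality of $\sheaf{F}$ renders $\sigma_i$ irrelevant and $\Autz(\sheaf{F}) = 1$ reduces equivalence to equality of $\gamma_i$. In the $\SL_r$-type case, $\Autz(\sheaf{F}) = 1$ together with the extra hypothesis $\xi^*\sheaf{F} \simeq \dual(\sheaf{F})$ (when a special involution exists) gives exactly the identification $(\gamma, \sigma) \sim (\xi\gamma, c\sigma)$, with no other coincidences possible. The key input is then the Goursat--Kolchin--Ribet analysis underlying Theorem~\ref{th-main1}: the geometric monodromy of the direct sum of pairwise non-equivalent pullbacks is the full product of copies of $\SL_r$ or $\Sp_{2g}$. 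Invariants therefore factorize as a product over classes $\alpha$, yielding
$$
m(\uple{\gamma}, \uple{\sigma}) = \prod_{\alpha} \dim\bigl(\std^{\otimes a_\alpha} \otimes (\std^*)^{\otimes b_\alpha}\bigr)^{G},
$$
where $G$ is $\SL_r$ or $\Sp_{2g}$ and $a_\alpha$, $b_\alpha$ record the multiplicities of direct vs.\ dual occurrences in class $\alpha$ (with $b_\alpha = 0$ symplectically). Non-vanishing of every factor is equivalent to $r \mid a_\alpha - b_\alpha$ in the $\SL_r$ case, or to $a_\alpha$ being even in the $\Sp_{2g}$ case, which holds for every $\alpha$ exactly under the failure of (respectively) $r$-normality with respect to $\xi$ and normality.

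For the ``generic'' pairings in the two parts of the statement, each class contributes $\dim(\std^{\otimes 2})^{\Sp_{2g}} = 1$ via the symplectic form, or $\dim(\std \otimes \std^*)^{\SL_r} = 1$ via the trace, so $m = 1$. In general, since $a_\alpha + b_\alpha \leq k$, each factor dimension is bounded in terms of $k$ and $r$ (or $g$) only, giving the stated uniform bound. The main obstacle, as I see it, is the careful combinatorial bookkeeping in the $\SL_r$ case with special involution: one must verify that the equivalence $(\gamma, \sigma) \sim (\xi\gamma, c\sigma)$ produces class multiplicities $a_\alpha - b_\alpha$ matching exactly the signed count in~\eqref{eq-r-normal}, so that non-$r$-normality with respect to $\xi$ corresponds precisely to $r \mid (a_\alpha - b_\alpha)$ for all $\alpha$, and hence to non-vanishing of $m$.
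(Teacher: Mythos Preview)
Your argument is correct and follows essentially the same route as the paper: apply the Grothendieck--Lefschetz trace formula, bound the $H^0_c$ and $H^1_c$ contributions by the Riemann Hypothesis as in Proposition~\ref{pr-rh}, use the equality of arithmetic and geometric monodromy to deduce that the Frobenius eigenvalues on $H^2_c$ are all equal to $p$, and then compute $\dim H^2_c$ via the Goursat--Kolchin--Ribet factorization over equivalence classes of the $\gamma_i$ (using the extra hypothesis $\xi^*\sheaf{F}\simeq\dual(\sheaf{F})$ to trivialize the rank-one twist $\sheaf{L}_0$ in the $\SL_r$ case). Your bookkeeping of the equivalence $(\gamma,\sigma)\sim(\xi\gamma,c\sigma)$ and the identification of non-$r$-normality with $r\mid a_\alpha-b_\alpha$ for all classes is exactly what the paper leaves to the reader.
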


The proofs of Theorem~\ref{th-main1}, Corollaries~\ref{cor-concrete}
and~\ref{cor-concrete2} will be found in Section~\ref{sec-proofs},
after we develop a more general framework in
Section~\ref{sec-general}. Many examples of (trace functions of)
bountiful sheaves, and also of the more general situation of the next
section, together with more statements of the resulting estimates, are
found in Section~\ref{sec-examples}.  Readers may wish to first read
through this last section in order to see more examples of the
estimates we obtain.
\par
There is a certain inevitable tension in this paper between the fact
that, on the one hand, we deal with rather general phenomena, and on
the other hand most applications involve extremely concrete special
cases. In Section~\ref{sec-howto}, we try to explain how one can, in
practice, begin to investigate a given sum with the help of the tools
described in this paper.

\subsection*{Notation and conventions}

(1) An $\ell$-adic sheaf over an algebraic variety $X$ defined over
$\Fp$ will always mean a constructible $\bar{\Qq}_{\ell}$-sheaf for
some $\ell\not=p$; whenever the trace function of such sheaves are
mentioned, it is assumed that an isomorphism
$\iota\,:\,\bar{\Qq}_{\ell}\lra \Cc$ has been chosen once and for all,
and that the trace function is seen as complex-valued through this
isomorphism.
\par
(2) A tuple $\uple{a}=(a_1,\ldots, a_k)$ (with $a_i$ in any set $A$)
is said to be primitive if all components are distinct. The
multiplicity in $\uple{a}$ of any element $a\in A$ is the number of
$i$ such that $a_i=a$. We will sometimes write $a\in\uple{a}$ (or
$a\notin\uple{a}$) to indicate that an element $a$ is (or is not)
among these components.  A subtuple $\uple{b}$ will mean any $l$-tuple
with $l\leq k$ such that all components of $\uple{b}$ are taken among
the $a_i$, with multiplicity at most that of $a_i$ in $\uple{a}$. We
will sometimes implicitly allow the components to be rearranged, which
will not affect any argument since all components will play symmetric
roles, or explicitly denote $\uple{a}\sim \uple{a}'$ to say that
$\uple{a}$ and $\uple{a}'$ differ only up to order (this includes
equality of multiplicity). Similarly, a sum (resp. product, tensor
product) product over $a\in\uple{a}$ means a sum (resp. product,
tensor product) with multiplicity, e.g.
$$
\sum_{a\in (1,1,2)}a^2=1^2+1^2+2^2.
$$
\par
(3) For a lisse sheaf $\sheaf{F}$ (resp. a middle-extension sheaf
$\sheaf{F}$ on $\Aa^1$) we denote by $\dual(\sheaf{F})$ the dual lisse
sheaf (resp. the middle-extension dual $j_*(\dual(j^*\sheaf{F}))$
where $j\,:\, U\injecte \Aa^1$ is the open immersion of a dense open
set where $\sheaf{F}$ is lisse).  If $\rho$ is a finite-dimensional
representation of a group $G$, we denote by $\dual(\rho)$ the
contragredient representation.
\par
(4) We denote by $\cent(G)$ the center of a group $G$, and by $G^0$
the connected component of the identity in a topological or algebraic
group $G$.

\subsection*{Acknowledgements}
 
Thanks to Z. Rudnick for feedback and suggestions concerning the
paper. Thanks also to A. Irving for asking a question that led us to
find a slip in a previous version.

\section{A general framework}\label{sec-general}

We provide in this section, and the next, a very general statement
concerning sheaves with trace functions of the type appearing
in~(\ref{eq-sp}). This will be presented in a purely algebraic manner,
and later sections will provide the diophantine interpretation that
leads to the results of the first section, as well as to more general
statements, which will be explained in the later sections.
\par
We first make a definition that encapsulates some of the content of
the Goursat-Kolchin-Ribet criterion of Katz (see\cite[\S
1.8]{katz-esde}):

\begin{definition}[Generous tuple]
  Let $k\geq 1$ be an integer and $p$ a prime. Let
  $U\subset\Aa^1_{\Fp}$ be a dense open set. Let
  $\sheaf{F}=(\sheaf{F}_i)$ be a tuple of $\ell$-adic middle-extension
  sheaves on $\Aa^1_{\Fp}$, all lisse on $U$. Denote by
$$
\rho_i\,:\, \pi_1(U\times\bFp,\bar{\eta})\lra \GL(V_i)
$$
the $\ell$-adic representations corresponding to $\sheaf{F}_i$, and
$$
\rho=\bigoplus_{1\leq i\leq k}\rho_i
$$
\par
We say that $\uple{\sheaf{F}}$ is \emph{$U$-generous} if:
\begin{enumerate}
\item The sheaves $\sheaf{F}_i$ are geometrically irreducible and
  pointwise pure of weight $0$ on $U$;
\item For all $i$, the normalizer of the connected component of the
  identity $G_i^0$ of the geometric monodromy group $G_i$ of
  $\sheaf{F}_i$ is contained in $\Gg_m G_i^0\subset \GL(V_i)$ and its
  Lie algebra is simple (in particular, $G^0_i$ acts irreducibly on
  $V_i$);
\item For all $i\not=j$, the pairs $(G_i^0,\std_i)$ and
  $(G_j^0,\std_j)$ are Goursat-adapted in the sense
  of~\cite[p. 24]{katz-esde}, where $\std_i$ denotes the tautological
  representations $G_i\subset \GL(V_i)$;
\item Let $G$ be the Zariski closure of the image of $\rho$ and let
  $\tilde{\rho_i}\,:\, G\lra \GL(V_i)$ be the representation such that
  $\rho_i$ is the composition
$$
\pi_1(U\times\bFp,\bar{\eta})\fleche{\rho}G\fleche{\tilde{\rho}_i}
\GL(V_i)\ ;
$$
then for all $i\not=j$, and all $1$-dimensional characters $\chi$ of
$G$, there is no isomorphism
\begin{equation}\label{eq-gkr-cond}
  \tilde{\rho}_i\simeq \tilde{\rho}_j\otimes \chi,
  \text{ or } \dual(\tilde{\rho}_i)\simeq
  \tilde{\rho}_j\otimes\chi
\end{equation}
as representations of $G$.
\end{enumerate}
\par
We say that $\uple{\sheaf{F}}$ is \emph{strictly $U$-generous} if it
is generous and the monodromy groups $G_i$ are connected.
\end{definition}

\begin{remark}
The last condition holds in particular if, for $i\not=j$, there is no
rank $1$ sheaf $\sheaf{L}$ such that
$$
\sheaf{F}_i\simeq \sheaf{F}_j\otimes \sheaf{L}, \text{ or }
\dual(\sheaf{F}_i)\simeq \sheaf{F}_j\otimes\sheaf{L},
$$
and we will usually check it in this form.
\end{remark}

\begin{example}\label{ex-old}
We just give quick examples here, leaving more detailed discussions to
Section~\ref{sec-examples}.
\par
(1) Let $U=\Gg_m$. Given $n\geq 1$ even (resp. odd) and a $k$-tuple
$(a_i)$ of distinct elements of $\Fpt$ (resp. elements distinct modulo
$\pm 1$), we take $\sheaf{F}_i=[\times a_i]^*\HYPK_n$, where $\HYPK_n$
is the $n$-variable Kloosterman sheaf with trace function
$\hypk_n(x;p)$ (see Section~\ref{sec-examples}).
\par
Then $(\sheaf{F}_i)$ is strictly $U$-generous. This follows from the
theory of Kloosterman sheaves, in particular the computation of the
geometric monodromy groups by Katz~\cite{katz-gkm}, and the fact that
there does not exist a rank $1$ sheaf $\sheaf{L}$ and a geometric
isomorphism
$$
[\times a]^*\HYPK_n\simeq \HYPK_n\otimes\sheaf{L}\text{ or } [\times
a]^*\HYPK_n\simeq \dual(\HYPK_n)\otimes\sheaf{L},
$$
for $a\not=1$ if $n$ is even, and for $a\notin \{\pm 1\}$ if $n$ is
odd. (In other words, we have $\Autz(\HYPK_r)=1$, and for $r\geq 3$
odd, $\Autt(\HYPK_r)$ contains the unique special involution $x\mapsto
-x$; see Section~\ref{sec-examples} for details).
\par
(2) Given $\sheaf{F}_0$ self-dual and lisse on $\Gg_m$, with geometric
monodromy group equal to $\Sp_{r}$, such that the projective
automorphism group of $\sheaf{F}_0$ is trivial, and a $k$-tuple
$(a_i)$ of distinct elements of $\Fpt$, we may take
$\sheaf{F}_i=[\times a_i]^*\sheaf{F}_0$ on $U=\Gg_m$, and
$(\sheaf{F}_i)$ is then strictly $\Gg_m$-generous.
\par
(3) Given $\sheaf{F}_0$ lisse on $\Gg_m$ with geometric monodromy
group $\Gg_0$ \emph{containing} $\SL_{r}$ for some $r\geq 3$, such
that
$$
\Autz(\sheaf{F}_0)\cap \Tt=1,
$$ 
where $\Tt\subset \PGL_2$ is the diagonal torus, 
and a $k$-tuple $\uple{a}=(a_i)$ of elements of $\Fpt$, 
then the tuple $([\times a_i]^*\sheaf{F}_0)$ is $\Gg_m$-generous.
\par
Indeed, all conditions of the definition are clearly met, except maybe
for the non-existence of isomorphisms
$$
\dual(\rho_i)\simeq \rho_j\otimes\chi
$$
for $i\not=j$. But restricting such an isomorphism to the inverse
image of $\SL_r\subset \Gg_0$, this would imply that the standard
representation of $\SL_r$ is self-dual, which is not the case (since
the restriction of $\sheaf{L}$ to this subgroup must be trivial, as it
factors through a character of $\SL_r$).
\par
(4) Given a $U$-generous tuple (resp strictly $U$-generous tuple), any
subtuple is still $U$-generous (resp. strictly
$U$-generous). Similarly, if $V\subset U$ is another dense open set,
the restrictions to $V$ of a $U$-generous tuple is $V$-generous (and
similarly for strictly generous tuples).
\end{example}

We now come back to the development of the general theory. The crucial
point is the following lemma:

\begin{lemma}[Katz]\label{lm-gkr}
  Let $\uple{\sheaf{F}}$ be $U$-generous. Then the connected component
  of the identity of the geometric monodromy group $G$ of the sheaf
$$
\bigoplus_{i}\sheaf{F}_i
$$
on $U$ is equal to the product
$$
G^0=\prod_{1\leq i\leq k}G_i^0
$$
of the connected components of the geometric monodromy groups $G_i$ of
$\sheaf{F}_i$. If $\uple{\sheaf{F}}$ is strictly generous, then
$G=G^0$. 
\par
Let $\pi\,:\, V\times\bFp\ra U\times \bFp$ be the finite abelian
\'etale covering corresponding to the surjective homomorphism
$$
\pi_1(U\times\bFp,\bar{\eta})\lra G/G^0,
$$
so that $V=U$ and $\pi$ is the identity on $U\times\bFp$ if
$\uple{\sheaf{F}}$ is strictly $U$-generous. Then the geometric
monodromy group of
$$
\pi^*\Bigl(\bigoplus_{i}\sheaf{F}_i\Bigr)
$$ 
is equal to $G^0$. Furthermore, the restriction to $G^0$ of any
  irreducible representation of $G$ is irreducible.
\end{lemma}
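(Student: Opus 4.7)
The plan is to follow the Goursat--Kolchin--Ribet method as developed by Katz in \cite[\S 1.8]{katz-esde}. Since $\rho = \bigoplus_i \rho_i$, the group $G$ embeds naturally in $\prod_i G_i$, and each composition $\pi_1(U\times\bFp,\bar\eta) \to G \to G_i$ coincides with $\rho_i$, whose image is Zariski dense in $G_i$; hence every projection $G \to G_i$ is surjective. The core task is to prove $G^0 = \prod_i G_i^0$. Once this is known, the two subsidiary assertions fall out: if $\uple{\sheaf{F}}$ is strictly generous, each $G_i$ is already connected, so $G \subset \prod_i G_i = \prod_i G_i^0 = G^0$, forcing $G = G^0$; and the covering $\pi$ was constructed so that $\pi_1(V\times\bFp,\bar\eta)$ equals the kernel of $\pi_1(U\times\bFp,\bar\eta) \twoheadrightarrow G/G^0$, so its image in $G$ under $\rho$ is precisely $G^0$.

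The main step is thus the identification $G^0 = \prod_i G_i^0$, which is exactly what Katz's formulation of the Goursat--Kolchin--Ribet criterion delivers. Condition~(2) (simple Lie algebra and normalizer contained in $\Gg_m G_i^0$) supplies the algebraic setting in which the criterion is formulated; condition~(3) is precisely the Goursat-adapted hypothesis on the pairs $(G_i^0,\std_i)$; and condition~(4) forbids exactly the twisted isomorphisms $\tilde\rho_i \simeq \tilde\rho_j\otimes\chi$ and $\dual(\tilde\rho_i)\simeq \tilde\rho_j\otimes\chi$ that would allow the image to embed diagonally inside some $G_i^0 \times G_j^0$. Iterating the two-factor case over pairs $(i,j)$ removes every potential obstruction and yields the full product decomposition. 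The main obstacle is bookkeeping: matching the hypotheses to the precise form required by \cite{katz-esde}, and verifying that including both alternatives in~\eqref{eq-gkr-cond} is what rules out self-dual, as well as straight, pairings between factors.

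For the final assertion on irreducibility, I would exploit the normalizer condition in~(2), which forces $G_i \subset \Gg_m G_i^0$ and therefore makes $G_i/G_i^0$ act on $V_i$ through scalars. Since $G/G^0$ embeds in $\prod_i G_i/G_i^0$, every element of $G/G^0$ acts on every $V_i$, and hence on every tensor product of the $V_i$ and their duals, by a scalar. Because $\rho$ is faithful, any irreducible representation of the algebraic group $G$ occurs as a subquotient of a tensor power of $\rho \oplus \dual(\rho)$; its restriction to $G^0 = \prod_i G_i^0$ decomposes into outer tensor products of irreducibles of the $G_i^0$, each of which is preserved by the scalar action of $G/G^0$. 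By Clifford theory, the restriction to $G^0$ of an irreducible representation of $G$ is isotypic with multiplicity controlled by a projective cocycle on $G/G^0$; the scalar action collapses this cocycle to the trivial one, giving multiplicity one and hence irreducibility.
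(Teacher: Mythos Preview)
Your argument for the monodromy computation is the same as the paper's: both simply invoke Katz's Goursat--Kolchin--Ribet proposition \cite[Prop.~1.8.2]{katz-esde}, checking that conditions~(2)--(4) in the definition of a generous tuple match its hypotheses (the paper also notes in passing that the normalizer condition forces $G_i^0$ to act irreducibly on $V_i$, which is needed to apply Katz's statement).

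For the final assertion on irreducibility of restrictions, your approach is correct but more elaborate than necessary. The paper observes directly that $G\subset \cent(G)\,G^0$: any $g=(g_i)\in G\subset\prod_i G_i$ has each $g_i\in\Gg_m G_i^0$ by condition~(2), so $g_i=\xi_i h_i$ with $\xi_i\in\Gg_m\cap G_i$ scalar and $h_i\in G_i^0$; setting $z=(\xi_i)$ and $h=(h_i)\in\prod_i G_i^0=G^0$, one has $z=gh^{-1}\in G$, and $z$, being scalar in each $\GL(V_i)$, lies in $\cent(G)$. Then for any irreducible $\tau$ of $G$ and any $g=zh$, we have $\tau(g)=\tau(z)\tau(h)$ with $\tau(z)$ scalar by Schur, so every $G^0$-invariant subspace is already $G$-invariant. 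This bypasses embedding $\tau$ in a tensor power and invoking Clifford theory or projective cocycles; your scalar observation is exactly the right ingredient, and the paper just packages it more directly.
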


\begin{proof}
  In view of the definition, the computation of the monodromy groups
  is a special case of the Goursat-Kolchin-Ribet Proposition of
  Katz~\cite[Prop. 1.8.2]{katz-esde} (noting that, with the notation
  there, if the normalizer of $G_i^0$ is contained in $\Gg_mG_i^0$,
  then $G_i^0$ acts irreducibly on $V_i$, because any
  subrepresentation is stable under the action of $\Gg_mG^0_i\supset
  N_{\GL(V_i)}G_i^0\supset G_i$).
\par
For the last part, let $\tau$ be an irreducible representation of
$G$. Note that
$$
G\subset \prod_i (\Gg_m G_i^0)\subset \cent(G)G^0
$$
by the second condition in the definition of a generous tuple, and the
fact that any $g\in G$ is of the form
$$
g=(\xi_i g_i)
$$ 
for some $\xi_i\in\Gg_m\cap G_i\subset \cent(G_i)$ and $g_i\in G_i^0$,
so that $g=zh$ with $z=(\xi_i)\in \cent(G)$ and $h=(g_i)\in G^0$.  It
follows that for any $g=zh\in G$, we have
$$
\tau(g)=\tau(zh)=\tau(z)\tau(h).
$$
\par
Since $\tau(z)$ is a scalar (because $\tau$ is $G$-irreducible and $z$
is central), we see that any $G^0$-invariant subspace is also
$G$-invariant.
\end{proof}

\begin{remark}\label{rm-so}
(1)  Note that even if the $G_i$ are connected, one must check the
  condition~(\ref{eq-gkr-cond}) with characters $\chi$ (although each
  $G_i$, being semisimple connected, has no non-trivial character);
  for instance the subgroup
$$
H=\{(g_1,g_2)\in \Sp_r\times\Sp_r\,\mid\, g_1g_2^{-1}\in \cent(\Sp_r)\}
$$
is a proper subgroup that projects to $\Sp_r$ on both factors; in this
case the representation $\rho_1$ (resp. $\rho_2$) of $H$ obtained by
the first (resp. second) projection satisfies
$$
\rho_2\simeq \rho_1\otimes \chi
$$
where $\chi(g_1,g_2)=g_2g_1^{-1}\in \cent(\Sp_r)\subset \Gg_m$. Thus
$\chi$ is a non-trivial character of $H$.  The same construction works
with $\Sp_r$ replaced by $\SL_r$ in the definition.
\par
(2) This result would not extend if we allow $G_i$ not contained in
$\Gg_m G_i^0$: for instance, if $G=\Ort_{2r}$, so that $G^0=\SO_{2r}$,
there exist irreducible representations of $G$ which split in two
irreducible subrepresentations when restricted to
$G^0$. 
\end{remark}

We then state a preliminary result, which for convenience\footnote{\
  See also Remark~\ref{rm-mellin}(1) for suggestions of a
  Mellin-transform analogue of sums of products, where this would be
  the only way to proceed.}  we express in the language of Tannakian
categories. For a $U$-generous tuple $\uple{\sheaf{F}}$, we denote by
$\tanna{\uple{\sheaf{F}}}$ the Tannakian category of sheaves on
$U\times\bFp$ generated by the sheaves $\sheaf{F}_i$.

\begin{proposition}\label{pr-category-2}
  Let $\uple{\sheaf{F}}$ be $U$-generous, and let $\pi\,:\,
  V\times\bFp\ra U\times \bFp$ be the finite abelian \'etale covering
  corresponding to the surjective homomorphism
$$
\pi_1(U\times\bFp,\bar{\eta})\lra G/G^0.
$$
\par
\emph{(1)} The the category $\tanna{\uple{\sheaf{F}}}$ is equivalent
as a Tannakian category to the category of representations of the
linear algebraic group $G$, a functor from the latter to
$\tanna{\uple{\sheaf{F}}}$ giving this equivalence is
$$
\Lambda\mapsto \Lambda\circ \rho_{\uple{\sheaf{F}}}
$$
where $\rho_{\uple{\sheaf{F}}}$ is the representation of
$\pi_1(U\times\bFp,\bar{\eta})$ corresponding to the lisse sheaf
$$
\bigoplus_{i}\sheaf{F}_i.
$$
\par
Furthermore the restriction to $G^0$ of a representation of $G$
corresponds to the functor $\pi^*$.
\par
\emph{(2)} If $\sheaf{G}$ is an irreducible object of
$\tanna{\uple{\sheaf{F}}}$, then we have a geometric isomorphism
$$
\pi^*\sheaf{G}\simeq \bigotimes_{i}\Lambda_i(\pi^*\sheaf{F}_i)
$$
where $\Lambda_i$ is an irreducible representation of $G^0_i$ for each
$i$. Two such sheaves have isomorphic restriction to $V\times\bFp$ if
and only if the respective $\Lambda_i$ are the same.
\end{proposition}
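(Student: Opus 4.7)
The plan is to reduce the statement to the Tannakian dictionary between lisse $\ell$-adic sheaves and continuous representations of the arithmetic fundamental group, and then leverage the product decomposition $G^0=\prod_i G_i^0$ provided by Lemma~\ref{lm-gkr}. All of the serious algebraic-geometric content has already been absorbed into that lemma, so what remains is largely a formal manipulation of Tannakian categories together with the standard tensor-product decomposition of representations of a direct product of semisimple groups.

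For part (1), I would first recall that the category of lisse $\bar{\Qq}_\ell$-sheaves on $U\times\bFp$ is equivalent to the category of continuous finite-dimensional $\bar{\Qq}_\ell$-representations of $\pi_1(U\times\bFp,\bar{\eta})$, in such a way that tensor products, duals and subquotients match. Under this equivalence, the Tannakian subcategory $\tanna{\uple{\sheaf{F}}}$ generated by $\bigoplus_i\sheaf{F}_i$ corresponds to the Tannakian subcategory generated by the representation $\rho_{\uple{\sheaf{F}}}=\bigoplus_i\rho_i$. By Deligne's version of Tannakian duality, this latter category is equivalent to the category of (algebraic) representations of the algebraic envelope $G$ of the image of $\rho_{\uple{\sheaf{F}}}$, via the functor $\Lambda\mapsto \Lambda\circ\rho_{\uple{\sheaf{F}}}$. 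The covering $\pi$ is, by its very definition, the étale cover corresponding to the open normal subgroup $\rho_{\uple{\sheaf{F}}}^{-1}(G^0)\subset \pi_1(U\times\bFp,\bar{\eta})$, so $\pi^*$ corresponds precisely to restricting a representation of $\pi_1$ to that subgroup; translated through Tannakian duality this is exactly the restriction of $G$-representations to $G^0$.

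For part (2), let $\sheaf{G}\in \tanna{\uple{\sheaf{F}}}$ be irreducible and let $\Lambda$ be the irreducible representation of $G$ corresponding to $\sheaf{G}$ under the equivalence of (1). By the last sentence of Lemma~\ref{lm-gkr}, $\Lambda|_{G^0}$ remains irreducible. Since $G^0=\prod_i G_i^0$ by Lemma~\ref{lm-gkr}, and since each $G_i^0$ is connected with simple Lie algebra (in particular reductive), every irreducible representation of $G^0$ is of the form $\bigotimes_i \Lambda_i$ with $\Lambda_i$ an irreducible representation of $G_i^0$; so $\Lambda|_{G^0}\simeq \bigotimes_i\Lambda_i$ for unique $\Lambda_i$ (up to isomorphism). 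Now, the restriction to $G^0$ of the tautological representation $\tilde\rho_i\colon G\to \GL(V_i)$ is (by the product structure of $G^0$) the representation factoring through the projection $G^0\to G_i^0$ followed by $\std_i$. Hence under the Tannakian equivalence, $\pi^*\sheaf{F}_i$ corresponds to the pullback to $G^0$ of the $i$-th factor representation, and $\Lambda_i(\pi^*\sheaf{F}_i)$ corresponds to the representation $\Lambda_i$ viewed as a representation of $G^0$ via the $i$-th projection. Applying the equivalence to the isomorphism $\Lambda|_{G^0}\simeq \bigotimes_i\Lambda_i$ gives the required geometric isomorphism
$$
\pi^*\sheaf{G}\simeq \bigotimes_i \Lambda_i(\pi^*\sheaf{F}_i).
$$
The final uniqueness assertion follows from the fact that two irreducible representations of $G^0=\prod_i G_i^0$ of tensor-product form are isomorphic if and only if each tensor factor is isomorphic, which is classical for a finite product of groups once one knows that each $\Lambda_i$ is irreducible over $G_i^0$.

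The only genuine obstacle is checking that the internal Künneth-style decomposition of irreducible representations of $\prod_i G_i^0$ as outer tensor products applies in our situation; this is standard over an algebraically closed field of characteristic zero for (e.g.) reductive groups, and the hypothesis that the Lie algebra of each $G_i^0$ is simple places us comfortably inside that framework. All other steps are formal, given Lemma~\ref{lm-gkr} and the standard Tannakian dictionary.
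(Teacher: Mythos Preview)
Your proof is correct and follows essentially the same approach as the paper: part (1) is dismissed there as ``a standard fact'', and part (2) is proved exactly as you do, by invoking the last sentence of Lemma~\ref{lm-gkr} to keep $\pi^*\sheaf{G}$ irreducible and then appealing to the classification of irreducible representations of the direct product $G^0=\prod_i G_i^0$. Your write-up simply expands the Tannakian bookkeeping that the paper leaves implicit.
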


\begin{proof}
  The first part is a standard fact. To deduce (2), we simply note
  that from the last part of Lemma~\ref{lm-gkr}, the pullback
  $\pi^*\sheaf{G}$ is geometrically irreducible if $\sheaf{G}$ is
  geometrically irreducible. We then obtain the stated formula from
  the classification of irreducible representations of a direct
  product.
\end{proof}

We now present a first classification theorem that is well-suited to
cases where all sheaves involved are self-dual.

\begin{theorem}[Diagonal classification]\label{th-diag}
  Let $\uple{\sheaf{F}}$ be $U$-generous and let $\pi\,:\,
  V\times\bFp\ra U\times \bFp$ be the finite abelian \'etale covering
  corresponding to the surjective homomorphism
$$
\pi_1(U\times\bFp,\bar{\eta})\lra G/G^0.
$$
\par
Let $\sheaf{G}$ be an $\ell$-adic sheaf which is geometrically
irreducible and lisse on $U$. Let
$$
\uple{n}=(n_1,\ldots, n_k)
$$
be a $k$-tuple of \emph{positive} integers. Denote
$$
\sheaf{F}_{\uple{n}}=\bigotimes_{1\leq i\leq k} \sheaf{F}_i^{\otimes
  n_i}.
$$
\par
We have
$$
H^2_c(U\times\bFp,\sheaf{F}_{\uple{n}}\otimes\dual(\sheaf{G}))\not=0
$$
only if there exists a geometric isomorphism
\begin{equation}\label{eq-diag-sheaf}
\pi^*\sheaf{G}\simeq \bigotimes_{i}\Lambda_i(\pi^*\sheaf{F}_i)
\end{equation}
on $V\times\bFp$, where, for all $i$, $\Lambda_i$ is an irreducible
representation of the group $G^0_i$ which is also a subrepresentation
of the representation $\std_i^{\otimes n_i}$ of $G^0_i$, with $\std_i$
denoting the natural faithful representation of $G^0_i$ corresponding
to $\pi^*\sheaf{F}_i$.
\par
In fact, for $\sheaf{G}$ given as above, we have
$$
\dim
H^2_c(U\times\bFp,\sheaf{F}_{\uple{n}}\otimes\dual(\sheaf{G}))\leq 
\prod_{1\leq i\leq k} \mathrm{mult}_{\Lambda_i}(\std_i^{\otimes n_i}),
$$ 
where $\mathrm{mult}_{\Lambda_i}(\std_i^{\otimes n_i})$ denotes the
multiplicity of $\Lambda_i$ in $\std_i^{\otimes n_i}$.
\par
If $\uple{\sheaf{F}}$ is strictly $U$-generous, then equality holds in
this formula, and in particular the $H^2_c$ is non-zero if and only if
$\sheaf{G}$ is of the form $\bigotimes_{i}\Lambda_i(\sheaf{F}_i)$ with
$\Lambda_i$ as above.
\par
In general, if $\sheaf{G}$ is of the form~(\ref{eq-diag-sheaf}), then
there exists a character $\chi$ of $G/G^0$ such that
$$
H^2_c(U\times\bFp,
\sheaf{F}_{\uple{n}}\otimes\dual(\sheaf{G}\otimes\chi))\not=0.
$$
\par
If all $n_i$ are equal to $1$, we denote $\sheaf{F}_{(1,\ldots,
  1)}=\sheaf{F}$. Then
$$
\dim H^2_c(U\times\bFp,\sheaf{F}\otimes\dual(\sheaf{G}))=0
$$
unless $\sheaf{G}\simeq \sheaf{F}$, and
$$
\dim H^2_c(U\times\bFp,\sheaf{F}\otimes\dual(\sheaf{G}))=1
$$
in that case.
\end{theorem}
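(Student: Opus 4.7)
The plan is to translate the non-vanishing of $H^2_c$ into a representation-theoretic multiplicity question for the geometric monodromy group $G$ identified in Lemma~\ref{lm-gkr}. Since $\sheaf{F}_{\uple{n}}\otimes\dual(\sheaf{G})$ is lisse and pointwise pure of weight $0$ on $U$, its geometric monodromy representation is built from the semisimple representation $\rho$ of $G$ by tensor, dual and restriction constructions, hence is semisimple. Combining the standard coinvariants description of $H^2_c$ on the affine curve $U$ with the coincidence of invariants and coinvariants for a semisimple representation, and using $(\dual(\sheaf{G})\otimes\sheaf{F}_{\uple{n}})^G = \Hom_G(\sheaf{G}, \sheaf{F}_{\uple{n}})$, I obtain
\begin{equation*}
\dim H^2_c(U\times\bFp,\, \sheaf{F}_{\uple{n}}\otimes\dual(\sheaf{G})) = \dim \Hom_G(\sheaf{G},\, \sheaf{F}_{\uple{n}}).
\end{equation*}

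I then restrict from $G$ to its connected component $G^0=\prod_i G^0_i$; geometrically, this is the pullback by $\pi$. If the $\Hom_G$ space above is nonzero, then $\sheaf{G}$ embeds in $\sheaf{F}_{\uple{n}}$ and so lies in the Tannakian category $\tanna{\uple{\sheaf{F}}}$. By Proposition~\ref{pr-category-2}(2), $\pi^*\sheaf{G}$ is then $G^0$-irreducible and factors as $\pi^*\sheaf{G}\simeq \bigotimes_i \Lambda_i(\pi^*\sheaf{F}_i)$ for a unique tuple of irreducible $G^0_i$-representations $\Lambda_i$. Meanwhile $\pi^*\sheaf{F}_{\uple{n}}\simeq \bigotimes_i \std_i^{\otimes n_i}$ as a $G^0$-representation, and the representation theory of direct products yields
\begin{equation*}
\dim \Hom_{G^0}(\pi^*\sheaf{G},\, \pi^*\sheaf{F}_{\uple{n}}) = \prod_{i=1}^{k} \mathrm{mult}_{\Lambda_i}(\std_i^{\otimes n_i}),
\end{equation*}
which vanishes unless each $\Lambda_i$ is a subrepresentation of $\std_i^{\otimes n_i}$. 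Since restriction induces an injection $\Hom_G(\sheaf{G}, \sheaf{F}_{\uple{n}})\hookrightarrow \Hom_{G^0}(\pi^*\sheaf{G}, \pi^*\sheaf{F}_{\uple{n}})$, this establishes both the structural necessary condition and the multiplicity upper bound.

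For strictly generous $\uple{\sheaf{F}}$, Lemma~\ref{lm-gkr} gives $G=G^0$, so the bound is attained. For the general case, Frobenius reciprocity for the finite abelian quotient $G/G^0$ supplies the decomposition
\begin{equation*}
\Hom_{G^0}(\pi^*\sheaf{G},\, \pi^*\sheaf{F}_{\uple{n}}) = \bigoplus_{\chi \in \widehat{G/G^0}} \Hom_G(\sheaf{G}\otimes\chi^{-1},\, \sheaf{F}_{\uple{n}}),
\end{equation*}
so that whenever $\sheaf{G}$ satisfies the pullback condition the left-hand side is nonzero and at least one summand on the right survives, yielding the promised character twist. Finally, when $n_i=1$ for every $i$, each $\std_i$ is $G^0_i$-irreducible by axiom (2) of generosity, so $\pi^*\sheaf{F}=\bigotimes_i \std_i$ is $G^0$-irreducible and hence $\sheaf{F}$ is $G$-irreducible; Schur's lemma then forces $\dim \Hom_G(\sheaf{G}, \sheaf{F})\in\{0,1\}$, equal to $1$ exactly when $\sheaf{G}\simeq\sheaf{F}$.

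The main technical obstacle is the semisimplicity step that upgrades coinvariants to invariants (and hence to $\Hom_G$), together with careful bookkeeping of the $G/G^0$-twists in the non-strict case. Once the Frobenius reciprocity identity above is in hand, the remaining multiplicity computations are routine reductions to the representation theory of the direct product $G^0=\prod_i G^0_i$, for which Proposition~\ref{pr-category-2} does the heavy lifting.
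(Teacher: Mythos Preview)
Your argument is essentially the same as the paper's: both translate the coinvariants formula for $H^2_c$ into a $\Hom$-space via semisimplicity, pass from $G$ to $G^0$ (the paper phrases this as the pullback inequality $\dim H^2_c(U)\leq \dim H^2_c(V)$, you phrase it as the injection $\Hom_G\hookrightarrow\Hom_{G^0}$), and then use the product decomposition of $G^0$ from Lemma~\ref{lm-gkr} and Proposition~\ref{pr-category-2}. Your Frobenius reciprocity decomposition over $\widehat{G/G^0}$ is a clean repackaging of the paper's observation that the $G^0$-invariants carry a $G/G^0$-action and hence contain a character.

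One notational point to tighten: in your opening displayed equality you write $\Hom_G(\sheaf{G},\sheaf{F}_{\uple{n}})$ before you know that $\sheaf{G}$ lies in $\tanna{\uple{\sheaf{F}}}$, so $\sheaf{G}$ is not yet a $G$-representation; the correct object at that stage is $\Hom_{\pi_1(U\times\bFp)}(\sheaf{G},\sheaf{F}_{\uple{n}})$, and semisimplicity follows from purity of weight $0$ rather than from being ``built from $\rho$''. The same remark applies to the Frobenius reciprocity step in the converse direction, which should be run for the pair $\pi_1(V\times\bFp)\lhd\pi_1(U\times\bFp)$ with quotient $G/G^0$. Once you make this adjustment the argument is complete and matches the paper.
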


The crucial point in the proof is the following very simple fact:

\begin{lemma}
  With the notation of the theorem, assume that
$$
H^2_c(U\times\bFp,
\sheaf{F}_{\uple{n}}\otimes\dual(\sheaf{G}))\not=0.
$$
\par
Then $\sheaf{G}$ is geometrically isomorphic to an object of
$\tanna{\uple{\sheaf{F}}}$.
\end{lemma}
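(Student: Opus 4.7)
The plan is to reinterpret the hypothesis via the coinvariants description of $H^2_c$ on an affine curve, convert the non-vanishing into the existence of a morphism in the category of $\pi_1^{\mathrm{geom}}$-representations, and then invoke the fact that the Tannakian category $\tanna{\uple{\sheaf{F}}}$ is closed under subquotients.

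First I would note that $\sheaf{F}_{\uple{n}}\otimes\dual(\sheaf{G})$ is lisse on $U$: the $\sheaf{F}_i$ are lisse on $U$ by the definition of generosity, and $\sheaf{G}$ is lisse on $U$ by hypothesis. Since $U\times\bFp$ is a smooth affine curve, for any lisse $\bar{\Qq}_\ell$-sheaf $\sheaf{H}$ on $U\times\bFp$ one has
\[
H^2_c(U\times\bFp,\sheaf{H}) \;=\; \sheaf{H}_{\bar\eta,\,\pi_1(U\times\bFp,\bar\eta)},
\]
the space of coinvariants of the geometric fundamental group acting on the generic stalk. Thus the hypothesis is equivalent to the existence of a non-zero $\pi_1(U\times\bFp,\bar\eta)$-equivariant map
\[
\sheaf{F}_{\uple{n}}\otimes\dual(\sheaf{G})\ \lra\ \mathbf{1}.
\]

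Next, by the tensor-Hom adjunction for $\pi_1^{\mathrm{geom}}$-representations, such a map is the same datum as a non-zero $\pi_1^{\mathrm{geom}}$-equivariant map $\sheaf{F}_{\uple{n}}\to\sheaf{G}$. Because $\sheaf{G}$ is assumed geometrically irreducible, any such non-zero map is surjective, so $\sheaf{G}$ arises as a geometric quotient of $\sheaf{F}_{\uple{n}}=\bigotimes_i \sheaf{F}_i^{\otimes n_i}$.

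Finally, $\sheaf{F}_{\uple{n}}$ is by its very construction a tensor product of the generating sheaves $\sheaf{F}_i$, hence an object of $\tanna{\uple{\sheaf{F}}}$. Since a Tannakian subcategory is stable under subquotients, $\sheaf{G}$ is (geometrically isomorphic to) an object of $\tanna{\uple{\sheaf{F}}}$, as claimed. The only substantive point that needs care is the coinvariants identification of $H^2_c$ on the affine curve $U\times\bFp$ together with the fact that in our setting all sheaves involved in the Hom-reformulation become lisse after pullback to $U$, so the translation between sheaves and $\pi_1^{\mathrm{geom}}$-representations is valid without further restriction.
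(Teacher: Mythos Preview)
Your argument is correct and is essentially the same as the paper's: both use the coinvariant description of $H^2_c$ on an affine curve to produce a non-zero morphism between $\sheaf{F}_{\uple{n}}$ and $\sheaf{G}$, then conclude by closure of the Tannakian category under subquotients. The only cosmetic difference is that the paper invokes semisimplicity to realize $\sheaf{G}$ as a \emph{subsheaf} of $\sheaf{F}_{\uple{n}}$ (passing from coinvariants to invariants), whereas you obtain it directly as a \emph{quotient}; either is fine since Tannakian categories are closed under both.
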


\begin{proof}
  By the co-invariant formula, the irreducibility of $\sheaf{G}$, and
  the semi-simplicity of the representations involved, the condition
  implies that $\sheaf{G}$ is geometrically isomorphic to a subsheaf
  of $\sheaf{F}_{\uple{n}}$. But clearly this sheaf is itself an
  object of $\tanna{\uple{\sheaf{F}}}$, hence the result by
  transitivity.
\end{proof}

\begin{proof}[Proof of the theorem]
  By the lemma, $\sheaf{G}$ is geometrically isomorphic to an object
  of $\tanna{\uple{\sheaf{F}}}$. Since it is also geometrically
  irreducible, Lemma~\ref{lm-gkr} shows that $\pi^*\sheaf{G}$ is also
  geometrically irreducible. Thus, by the proposition, it follows that
$$
\pi^*\sheaf{G}\simeq \bigotimes_{1\leq i\leq
  k}\Lambda_i(\pi^*\sheaf{F}_i),
$$
where the $\Lambda_i$ are some irreducible representations of the group
$G^0_i$. We have then
$$
\dim H^2_c(U\times\bFp,\sheaf{F}_{\uple{n}}\otimes\dual(\sheaf{G}))
\leq \dim
H^2_c(V\times\bFp,\pi^*\sheaf{F}_{\uple{n}}\otimes\dual(\pi^*\sheaf{G}))
= \dim
(\sheaf{F}_{\uple{n},\bar{\eta}}\otimes\dual(\sheaf{G}_{\bar{\eta}}))^{G^0},
$$
where we can use invariants instead of coinvariants because the
representations are semisimple.  But the $G^0$-invariants of the
generic fibre of
$$
\pi^*\sheaf{F}_{\uple{n}}\otimes\dual(\pi^*\sheaf{G})
=\bigotimes_{1\leq i\leq k}\Bigl(\pi^*\sheaf{F}_i^{\otimes n_i}\otimes
\dual(\Lambda_i(\pi^*\sheaf{F}_i))\Bigr)
$$
are isomorphic (under the equivalence of the proposition) to the
invariants of $G^0$ on
$$
\bigboxtimes_{1\leq i\leq k} \Bigl( \std_i^{\otimes n_i}\otimes
\dual(\Lambda_i)\Bigr)
$$
hence to the tensor product over $i$ of the $G^0$-invariants of
$$
\std_i^{\otimes n_i}\otimes \dual(\Lambda_i).
$$
\par
Thus we get the inequality for the dimension, and in particular the
$G^0$-invariant space is non-zero if and only if $\Lambda_i$ is a
subrepresentation of $\std_i^{\otimes n_i}$ for all $1\leq i\leq k$,
and this gives a necessary condition for the $G$-invariant space to be
non-zero.
\par
In the opposite direction, if $\sheaf{G}$ is given
by~(\ref{eq-diag-sheaf}) with $\Lambda_i$ an irreducible
subrepresentation of $\std_i^{\otimes n_i}$, then we have
$$
(\sheaf{F}_{\uple{n},\bar{\eta}}\otimes\dual(\sheaf{G}_{\bar{\eta}}))^{G^0}
\not=0.
$$
\par
This invariant space is naturally a representation of $G/G^0$; since
it is non-zero, it contains at least one character $\chi$; one then
checks easily that
$$
(\sheaf{F}_{\uple{n},\bar{\eta}}\otimes
\dual(\sheaf{G}_{\bar{\eta}}\otimes\chi))^{G}\not=0.
$$
\par
Finally, if $n_i=1$ and the $H^2_c$ is non-zero, then since
$\sheaf{F}$ is irreducible in this case (e.g. because its restriction
to $G^0$ is irreducible as $\bigboxtimes_i \std_i$), Schur's Lemma
gives the result.
\end{proof}

\begin{example}
  In the setting of Example~\ref{ex-old}(2), the sheaves
  $\sheaf{F}_{\uple{n}}$ have trace functions
$$
\prod_{1\leq i\leq k} \frfn{\sheaf{F}_0}(a_ix)^{n_i},
$$
and therefore we obtain criteria for square-root cancellation of the
sums
$$
\sum_{x\in \Fpt} \prod_{1\leq i\leq k}
\frfn{\sheaf{F}_0}(a_ix)^{n_i}\frfn{\sheaf{G}}(x).
$$
\par
If we take $\sheaf{G}=\sheaf{L}_{\psi(hX)}$ for some $h$, then we are
in the situation described in the introduction.
\end{example}

We state separately a more general version of Theorem~\ref{th-diag}
which is useful when some sheaves are not self-dual. 

\begin{theorem}[Diagonal classification, 2]\label{th-diag-2}
  Let $\uple{\sheaf{F}}$ be $U$-generous and let $\pi\,:\,
  V\times\bFp\ra U\times \bFp$ be the finite abelian \'etale covering
  corresponding to the surjective homomorphism
$$
\pi_1(U\times\bFp,\bar{\eta})\lra G/G^0.
$$
\par
Let $\sheaf{G}$ be an $\ell$-adic sheaf which is geometrically
irreducible and lisse on $U$. Let
$$
\uple{m}=(m_1,\ldots, m_k),\quad\quad \uple{n}=(n_1,\ldots, n_k)
$$
be $k$-tuples of integers such that $n_i+m_i\geq 1$ for all
$i$. Denote
$$
\sheaf{F}_{\uple{m},\uple{n}}=\bigotimes_{1\leq i\leq k}
\Bigl(\sheaf{F}_i^{\otimes m_i}\otimes \dual(\sheaf{F}_i)^{\otimes
  n_i}\Bigr).
$$
\par
We have
$$
H^2_c(U\times\bFp,\sheaf{F}_{\uple{m},\uple{n}}\otimes\dual(\sheaf{G}))\not=0
$$
only if there exists a geometric isomorphism
\begin{equation}\label{eq-diag-sheaf-2}
\pi^*\sheaf{G}\simeq \bigotimes_{i}\Lambda_i(\pi^*\sheaf{F}_i)
\end{equation}
on $V\times\bFp$, where, for all $i$, $\Lambda_i$ is an irreducible
representation of the group $G^0_i$ which is also a subrepresentation
of the representation $\std_i^{\otimes m_i}\otimes
\dual(\std_i)^{\otimes n_i}$ of $G^0_i$, with $\std_i$ denoting the
natural faithful representation of $G^0_i$ corresponding to
$\pi^*\sheaf{F}_i$.
\par
In fact, for $\sheaf{G}$ given as above, we have
$$
\dim
H^2_c(U\times\bFp,\sheaf{F}_{\uple{m},\uple{n}}\otimes\dual(\sheaf{G}))
\leq 
\prod_{1\leq i\leq k} \mathrm{mult}_{\Lambda_i}(\std_i^{\otimes m_i}\otimes
\dual(\std_i)^{\otimes n_i}),
$$ 
where $ \mathrm{mult}_{\Lambda_i}(\std_i^{\otimes m_i}\otimes
\dual(\std_i)^{\otimes n_i})$ denotes the multiplicity of $\Lambda_i$
in $\std_i^{\otimes m_i}\otimes \dual(\std_i)^{\otimes n_i}$. If
$\uple{\sheaf{F}}$ is strictly $U$-generous, then there is equality,
and the converse also holds.
\par
In general, if $\sheaf{G}$ is given by~(\ref{eq-diag-sheaf-2}), then
there exists a character $\chi$ of $G/G^0$ such that
$$
H^2_c(U\times\bFp,\sheaf{F}_{\uple{m},\uple{n}}
\otimes\dual(\sheaf{G}\otimes\chi))\not=0.
$$
\end{theorem}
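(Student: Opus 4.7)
The plan is to adapt the argument used for Theorem~\ref{th-diag} with minimal changes: the essential point is that the Tannakian category $\tanna{\uple{\sheaf{F}}}$ is closed under both tensor product and duality, so the sheaf $\sheaf{F}_{\uple{m},\uple{n}}$ is still an object of $\tanna{\uple{\sheaf{F}}}$. Thus I first prove (exactly as in the lemma preceding the proof of Theorem~\ref{th-diag}) that if
$$
H^2_c(U\times\bFp,\sheaf{F}_{\uple{m},\uple{n}}\otimes\dual(\sheaf{G}))\neq 0,
$$
then the co-invariant formula together with the geometric irreducibility of $\sheaf{G}$ and semisimplicity forces $\sheaf{G}$ to be a geometric subsheaf of $\sheaf{F}_{\uple{m},\uple{n}}$, hence geometrically isomorphic to an object of $\tanna{\uple{\sheaf{F}}}$.

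From that point I apply Proposition~\ref{pr-category-2} to get the isomorphism $\pi^*\sheaf{G}\simeq \bigotimes_i \Lambda_i(\pi^*\sheaf{F}_i)$ with each $\Lambda_i$ irreducible, producing the required form~\eqref{eq-diag-sheaf-2}. To get the dimension bound, I use that $\pi$ is finite \'etale to compare
$$
\dim H^2_c(U\times\bFp,\sheaf{F}_{\uple{m},\uple{n}}\otimes\dual(\sheaf{G}))
\leq
\dim H^2_c(V\times\bFp, \pi^*\sheaf{F}_{\uple{m},\uple{n}}\otimes\dual(\pi^*\sheaf{G})),
$$
then pass from coinvariants to invariants of $G^0$ (legitimate by semisimplicity, since $\sheaf{F}_i$ are pure of weight zero and geometrically irreducible on $U$). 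Under the Tannakian equivalence of Proposition~\ref{pr-category-2}, the generic fibre of $\pi^*\sheaf{F}_{\uple{m},\uple{n}}\otimes\dual(\pi^*\sheaf{G})$ corresponds to
$$
\bigboxtimes_{1\leq i\leq k}\Bigl(\std_i^{\otimes m_i}\otimes\dual(\std_i)^{\otimes n_i}\otimes\dual(\Lambda_i)\Bigr),
$$
and by Lemma~\ref{lm-gkr} we have $G^0=\prod_i G_i^0$, so invariants factor as a tensor product over $i$ of $\dim \Hom_{G_i^0}(\Lambda_i,\std_i^{\otimes m_i}\otimes\dual(\std_i)^{\otimes n_i})$, giving exactly the claimed bound by multiplicities.

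The remaining assertions are then essentially formal. In the strictly $U$-generous case, $G=G^0$ and $\pi$ is the identity, so every inequality above is in fact an equality and the converse direction is immediate from the same computation run in reverse. In the general $U$-generous case, the non-vanishing of the $G^0$-invariant space makes it a non-trivial representation of the finite abelian quotient $G/G^0$; decomposing it into characters and picking any constituent $\chi$ produces a twist $\sheaf{G}\otimes\chi$ with non-zero $G$-invariants, i.e.\ $H^2_c(U\times\bFp,\sheaf{F}_{\uple{m},\uple{n}}\otimes\dual(\sheaf{G}\otimes\chi))\neq 0$. The main obstacle is essentially bookkeeping: one must keep careful track of which factors are tensored with their dual, but because the Goursat-adapted hypothesis controls $\sheaf{F}_i$ and $\dual(\sheaf{F}_i)$ symmetrically (via the second clause of~\eqref{eq-gkr-cond}), the same Goursat-Kolchin-Ribet machinery from Lemma~\ref{lm-gkr} applies verbatim, and no new geometric input is required beyond what was already used for Theorem~\ref{th-diag}.
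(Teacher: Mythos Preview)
Your proposal is correct and follows exactly the paper's approach: the paper's proof is literally the single sentence ``This is the same as that of Theorem~\ref{th-diag}, mutatis mutandis,'' and you have faithfully spelled out those mutations (closure of the Tannakian category under duality, replacing $\std_i^{\otimes n_i}$ by $\std_i^{\otimes m_i}\otimes\dual(\std_i)^{\otimes n_i}$ throughout, and otherwise reusing the co-invariant formula, Proposition~\ref{pr-category-2}, and the $G^0=\prod_i G_i^0$ factorization from Lemma~\ref{lm-gkr}).
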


Clearly, the case $\uple{n}=(0,\ldots,0)$ recovers
Theorem~\ref{th-diag}. 

\begin{proof}
  This is the same as that of Theorem~\ref{th-diag}, mutatis mutandis.
\end{proof}

Here is a simple corollary that can be very helpful:

\begin{corollary}
  Let $\uple{\sheaf{F}}=(\sheaf{F}_i)_{1\leq i\leq k}$ be
  $U$-generous. Let $\sheaf{G}$ be an $\ell$-adic sheaf. Let
  $\uple{\sigma}$ be a $k$-tuple of elements of $\Aut(\Cc/\Rr)$. If
$$
\rank\sheaf{G}<\prod_i\rank\sheaf{F}_i,
$$
then we have
$$
H^2_c(U\times\bFp,\bigotimes_{1\leq i\leq
  k}\sheaf{F}_i^{\sigma_i}\otimes\dual(\sheaf{G}))=0.
$$
\end{corollary}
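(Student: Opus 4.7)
The plan is to derive this directly from Theorem~\ref{th-diag-2}. For each index $i$, choose $(m_i, n_i) = (1, 0)$ when $\sigma_i$ is the identity and $(m_i, n_i) = (0, 1)$ when $\sigma_i = c$. Then $m_i + n_i = 1$ and $\sheaf{F}_i^{\otimes m_i} \otimes \dual(\sheaf{F}_i)^{\otimes n_i} \simeq \sheaf{F}_i^{\sigma_i}$, so that $\sheaf{F}_{\uple{m}, \uple{n}}$ equals the tensor product $\bigotimes_i \sheaf{F}_i^{\sigma_i}$ appearing in the statement.

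Before applying the theorem we would reduce to the case where $\sheaf{G}$ is geometrically irreducible. Since $H^2_c(U \times \bFp, -)$ depends only on the restriction to $U$, we may replace $\sheaf{G}$ by $\sheaf{G}|_U$ and work with lisse sheaves throughout. A short exact sequence $0 \to \sheaf{G}_1 \to \sheaf{G}|_U \to \sheaf{G}_2 \to 0$ of lisse sheaves remains exact after dualizing and tensoring with the lisse sheaf $\sheaf{F}_{\uple{m},\uple{n}}$; since $U$ is an affine curve, $H^3_c$ vanishes and the associated long exact sequence of compactly supported cohomology shows that $H^2_c(U \times \bFp, \sheaf{F}_{\uple{m},\uple{n}} \otimes \dual(\sheaf{G}|_U))$ vanishes as soon as it does for both constituents. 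Iterating over a geometric Jordan--H\"older filtration, each constituent is geometrically irreducible of rank at most $\rank \sheaf{G} < \prod_i \rank \sheaf{F}_i$, so it suffices to treat the geometrically irreducible case.

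Now, assuming $\sheaf{G}$ is geometrically irreducible and lisse on $U$, suppose for contradiction that $H^2_c(U \times \bFp, \sheaf{F}_{\uple{m},\uple{n}} \otimes \dual(\sheaf{G}))$ does not vanish. Theorem~\ref{th-diag-2} then provides a geometric isomorphism $\pi^*\sheaf{G} \simeq \bigotimes_i \Lambda_i(\pi^*\sheaf{F}_i)$ with each $\Lambda_i$ an irreducible subrepresentation of $\std_i^{\otimes m_i} \otimes \dual(\std_i)^{\otimes n_i}$. By our choice of $(m_i, n_i)$ this ambient representation is either $\std_i$ itself or $\dual(\std_i)$, and both are irreducible, because condition (2) of generosity forces $G_i^0$ to act irreducibly on $V_i$ and hence also on its dual. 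Therefore $\Lambda_i$ coincides with the whole ambient representation and has dimension $\rank \sheaf{F}_i$. Since $\pi$ is finite \'etale, $\rank \pi^*\sheaf{G} = \rank \sheaf{G}$, and comparing ranks gives $\rank \sheaf{G} = \prod_i \rank \sheaf{F}_i$, contradicting the hypothesis.

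The only mildly delicate step is the Jordan--H\"older reduction, but restricting to the open set $U$ on which $\sheaf{F}_{\uple{m},\uple{n}}$ is lisse reduces it to a standard argument about lisse sheaves on an affine curve; the heart of the matter is really the elementary observation that when $m_i + n_i = 1$ the ambient representation in Theorem~\ref{th-diag-2} cannot have a proper irreducible subrepresentation.
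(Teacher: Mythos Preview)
Your proof is correct and follows essentially the same route as the paper: set $m_i+n_i=1$ according to $\sigma_i$, reduce to $\sheaf{G}$ geometrically irreducible, apply Theorem~\ref{th-diag-2}, and observe that the ambient representation $\std_i$ or $\dual(\std_i)$ is already irreducible so $\Lambda_i$ must equal it, forcing $\rank\sheaf{G}=\prod_i\rank\sheaf{F}_i$. The paper's version is terser in the reduction step (it just says ``$H^2_c$ is additive''), while you spell out the long exact sequence and Jordan--H\"older filtration more carefully, but the substance is the same.
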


\begin{proof}
  Note that this corresponds to the previous situation, with
  $\uple{m}$ and $\uple{n}$ such that $m_i+n_i=1$ for all $i$.
\par
By considering a geometrically irreducible subsheaf of $\sheaf{G}$, we
may assume that it is geometrically irreducible (since a subsheaf
still satisfies the dimension bound and $H^2_c$ is additive).  By the
previous arguments, if the $H^2_c$ were non-zero, then we would then
have
$$
\pi^*\sheaf{G}\simeq \bigotimes_i \Lambda_i(\pi^*\sheaf{F}_i),
$$
where $\Lambda_i$ is irreducible and occurs in $\std_{i}$. But this
implies that $\Lambda_i\simeq \std_i$, and in particular that
$$
\rank\sheaf{G}=\prod_i \rank\sheaf{F}_i.
$$
\end{proof}

We will use the following additional lemma in
Section~\ref{sec-control}:

\begin{lemma}\label{lm-intersect}
  Let $\uple{\sheaf{F}}_1=(\sheaf{F}_{1,i})$ and
  $\uple{\sheaf{F}}_2=(\sheaf{F}_{2,j})$ be tuples of sheaves.
\par
Let $\uple{\sheaf{F}}_3$ be the tuple containing those sheaves which
occur, up to geometric isomorphism, in both $\uple{\sheaf{F}}_1$ and
$\uple{\sheaf{F}}_2$, and let $\uple{\sheaf{F}}_4$ be the tuple
containing those sheaves which occur in either $\uple{\sheaf{F}}_1$ or
$\uple{\sheaf{F}}_2$. Assume that $\uple{\sheaf{F}}_4$ is
$U$-generous, and let $\pi\,:\, V\times\bFp\ra U\times \bFp$ be the
finite abelian \'etale covering corresponding to the surjective
homomorphism
$$
\pi_1(U\times\bFp,\bar{\eta})\lra G/G^0
$$
corresponding to this generous tuple.
\par
Let $\sheaf{G}$ be an $\ell$-adic sheaf on $U$ which is geometrically
isomorphic both to some object in $\tanna{\uple{\sheaf{F}}_1}$ and to
some object in $\tanna{\uple{\sheaf{F}}_2}$. Then $\pi^*\sheaf{G}$ is
geometrically isomorphic to $\pi^*\sheaf{G}_1$ for some object
$\sheaf{G}_1$ in $\tanna{\uple{\sheaf{F}}_3}$.
\end{lemma}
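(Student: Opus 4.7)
The plan is to apply the Tannakian formalism of Proposition~\ref{pr-category-2} to the generous tuple $\uple{\sheaf{F}}_4$: pullback by $\pi$ identifies objects of $\tanna{\uple{\sheaf{F}}_4}$ (geometrically) with representations of the semisimple connected group $G^0 = \prod_k G^0_{4,k}$, where $k$ ranges over the entries of $\uple{\sheaf{F}}_4$. Under this correspondence, objects of the subcategory $\tanna{\uple{\sheaf{F}}_\bullet}$ correspond to representations that are trivial on each $G^0_{4,k}$ with $k \notin S_\bullet$, where $S_\bullet$ is the subset of indices $k$ such that $\sheaf{F}_{4,k}$ occurs in $\uple{\sheaf{F}}_\bullet$, for $\bullet \in \{1,2,3\}$; in particular $S_3 = S_1 \cap S_2$.

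First, I would use the hypothesis $\sheaf{G} \in \tanna{\uple{\sheaf{F}}_1}$ to realize $\sheaf{G}$ geometrically as a subquotient of some tensor construction $\bigotimes_i \sheaf{F}_{1,i}^{\otimes m_i} \otimes \dual(\sheaf{F}_{1,i})^{\otimes n_i}$. Letting $\tau$ denote the $G^0$-representation attached to $\pi^*\sheaf{G}$, this exhibits $\tau$ as a subquotient of a $G^0$-representation trivial on every $G^0_{4,k}$ with $k \notin S_1$; by complete reducibility (as $G^0$ is semisimple), $\tau$ itself is trivial on those factors. The symmetric argument with $\uple{\sheaf{F}}_2$ yields triviality on factors outside $S_2$, so $\tau$ factors through the projection $G^0 \twoheadrightarrow G^0_3 := \prod_{k \in S_3} G^0_{4,k}$, which is the connected identity component of the monodromy of the still $U$-generous subtuple $\uple{\sheaf{F}}_3$ (see Example~\ref{ex-old}(4)).

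Second, I would lift the resulting $G^0_3$-representation to a representation of the full monodromy group $G_3$ of $\uple{\sheaf{F}}_3$: since (as in the proof of Lemma~\ref{lm-gkr}) $G_3 \subseteq \cent(G_3) \cdot G^0_3$, the quotient $G_3/G^0_3$ is abelian and acts trivially on $G^0_3$ by conjugation. Hence each $G^0_3$-irreducible constituent of $\tau$ extends to a $G_3$-irreducible by extending the scalar action of the center from $\cent(G_3) \cap G^0_3$ to $\cent(G_3)$ (a finite-index extension of character groups, possible since $\bQl^\times$ is divisible). Applying Proposition~\ref{pr-category-2} to $\uple{\sheaf{F}}_3$ turns the resulting $G_3$-representation into an object $\sheaf{G}_1 \in \tanna{\uple{\sheaf{F}}_3}$. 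By construction $\pi^*\sheaf{G}_1$ and $\pi^*\sheaf{G}$ correspond to the same $G^0_3$-representation (pulled back to $G^0$), hence are geometrically isomorphic on $V \times \bFp$.

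The main obstacle will be the lifting step: one must take care that the central-character extensions are chosen coherently across irreducible constituents and that the resulting sheaf genuinely lies in $\tanna{\uple{\sheaf{F}}_3}$ rather than merely in an abstract completion. This is manageable because the generosity of $\uple{\sheaf{F}}_3$, together with the last part of Lemma~\ref{lm-gkr}, ensures that irreducibles of $G_3$ remain irreducible upon restriction to $G^0_3$, so the match between $\pi^*\sheaf{G}_1$ and $\pi^*\sheaf{G}$ can be arranged at the level of isomorphism classes of irreducible constituents without any excess summands to account for.
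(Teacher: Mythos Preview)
Your argument is correct and rests on the same underlying idea as the paper's proof: use the Goursat--Kolchin--Ribet product decomposition $G^0=\prod_k G^0_{4,k}$ coming from the generosity of $\uple{\sheaf{F}}_4$, and observe that the $G^0$-representation attached to $\pi^*\sheaf{G}$ is trivial on every factor outside $S_1\cap S_2=S_3$.

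The organization differs, however. The paper indexes everything by the disjoint union of the two tuples, so its ambient group is a subgroup of $\prod_i G_{1,i}\times\prod_j G_{2,j}$ in which the common sheaves sit \emph{diagonally}; the final step is then an explicit group-theoretic isomorphism
\[
G^0\big/\bigl((G^0\cap\textstyle\prod_i G_{1,i}^0)\times(G^0\cap\prod_j G_{2,j}^0)\bigr)\;\simeq\;G^0\cap H,
\]
from which the conclusion is read off. You instead work directly with the tuple $\uple{\sheaf{F}}_4$ (no repeated factors), so that $G^0$ is a genuine direct product and the ``trivial on the non-common factors'' statement is immediate; you then pay for this economy with an explicit lifting step from $G_3^0$ to $G_3$ via central characters. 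Your route is arguably cleaner, and your lifting argument (using $G_3\subset\cent(G_3)G_3^0$ and divisibility of $\bQl^\times$) makes precise a point the paper's ``This gives the desired conclusion'' leaves implicit. Conversely, the paper's explicit quotient computation has the virtue of making the identification of $\pi^*\tanna{\uple{\sheaf{F}}_3}$ with representations of $G^0\cap H$ completely concrete.
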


\begin{proof}
We denote by $G_{1,i}$ (resp. $G_{2,j}$) the geometric monodromy
groups of the $\sheaf{F}_{1,i}$ (resp. $\sheaf{F}_{2,j}$). Let
$$
\rho\,:\, \pi_1(U\times\bFp,\bar{\eta})\lra \GL(W)
$$
be the $\ell$-adic representation corresponding to the sheaf
$$
\bigoplus_{i}\sheaf{F}_{1,i}\oplus \bigoplus_{j}\sheaf{F}_{2,j},
$$
and let $G$ be its geometric monodromy group, which is a subgroup of
$$
\prod_{i}G_{1,i}\times \prod_j G_{2,j}.
$$
\par
The objects of $\tanna{\uple{\sheaf{F}}_1}$
(resp. $\tanna{\uple{\sheaf{F}}_2}$) are those objects of
$\tanna{\uple{\sheaf{F}}_4}$ which correspond to representations of
$G$ trivial on
$$
G\cap \prod_j G_{2,j}\quad\quad
\text{(resp. trivial on $G\cap \prod_i G_{1,i}$)}.
$$
\par
Consequently, objects belonging to both $\tanna{\uple{\sheaf{F}}_1}$
and $\tanna{\uple{\sheaf{F}}_2}$ are representations of $G$ trivial on
$$
(G\cap \prod_j G_{1,i})\times (G\cap \prod_i G_{2,j}).
$$
\par
On the other hand, for $I'\subset I$ and $J'\subset J$ parameterizing
the tuple $\uple{\sheaf{F}}_3$, and $\sigma\,:\, I'\lra J'$ a
bijection such that $\sheaf{F}_{1,i}$ and $\sheaf{F}_{2,\sigma(i)}$
are geometrically isomorphic, the objects of
$\tanna{\uple{\sheaf{F}}_3}$ correspond to representations of the
geometric monodromy group $G'$ of
$$
\bigoplus_{i\in I'}{(\sheaf{F}_{1,i}\oplus \sheaf{F}_{2,\sigma(i)})}.
$$
\par
This can be identified with the group $G\cap H$, where $H$ is the
subgroup of
$$
\prod_{i}G_{1,i}\times \prod_j G_{2,j}
$$
with coordinates $(x_i)_{i\in I}$, $(y_j)_{j\in J}$, determined by the
conditions $x_i=1$ for $i\notin I'$, $y_j=1$ for $j\notin J'$, and
$$
y_{\sigma(i)}=\alpha_ix_i\alpha_i^{-1}
$$
for all $i\in I'$, where $\alpha_i$ is fixed (the inner automorphism
by $\alpha_i$ realizing the geometric isomorphism of
$\sheaf{F}_{1,i}$ with $\sheaf{F}_{2,\sigma(i)}$.) 
\par
The analogue assertions hold after pullback under $\pi$, if all
$G_{1,i}$ and $G_{2,j}$ are replaced with their respective connected
components.
\par
By the assumption that $\uple{\sheaf{F}}_4$ is $U$-generous, we see
that $G^0$ is equal to the product
$$
\{(x,y,\alpha(y),z)\,\mid\, x\in \prod_{i\in I-I'}G_{1,i}^0\,\ 
y\in\prod_{i\in I'}{G_{1,i}},\ 
z\in \prod_{J-\sigma(I')}G_{2,j}^0\}\subset G
$$
(where $\alpha$ is the isomorphism
$$
\prod_{i\in I'}{G_{1,i}}\lra \prod_{j\in J'}{G_{2,j}}
$$
given by mapping $x_i\in G_{1,i}$ to $\alpha_ix_i\alpha_i^{-1}\in
G_{2,\sigma(i)}$) and therefore we find that
$$
G^0/(G^0\cap \prod_i G_{1,i}^0)\times (G^0\cap \prod_j
G_{2,j}^0)\simeq G^0\cap H.
$$
\par
This gives the desired conclusion.
\end{proof}

\section{Examples}\label{sec-examples}

We collect here examples of trace functions for which the results
stated in the introduction or in the previous section apply, and state
some of the resulting bounds for convenience. These examples are
taken for the most part from the many results of Katz, who has
computed the monodromy groups of many classes of sheaves over $\Aa^1$
using a variety of techniques.
\par

\subsection{General construction}\label{ssec-general}
Quite generally, let $(\sheaf{F}_i)_{i \in I}$ be any finite tuple of
middle-extension sheaves of weight $0$ on $\Aa^1_{\Fp}$ such that the
geometric monodromy groups $G_i$ of the restriction of $\sheaf{F}_i$
to a dense open set $U_i$ where it is lisse, is such that $G_i^0$ is
any of the groups
\begin{gather*}
  \SL_r,\text{ for $r\geq
    3$},\quad \quad \SO_{2r+1},\text{ for $r\geq 1$},\\
  \Sp_{r},\text{ for $r$ even $\geq 2$},\\
  \mathbf{F}_4,\quad \mathbf{E}_7,\quad \mathbf{E}_8,\quad
  \mathbf{G}_2.
\end{gather*}
\par
Then we can always extract a convenient generous subtuple as follows:
let $U$ be the intersection of the $U_i$, and let $J\subset I$ be any
set of representatives of $I$ for the equivalence relation defined by
$i\sim j$ if and only if
$$
\sheaf{F}_i\simeq \sheaf{F}_j\otimes \sheaf{L},\text{ or }
\dual(\sheaf{F}_i)\simeq \sheaf{F}_j\otimes\sheaf{L}
$$
on $U$ for some rank $1$ sheaf $\sheaf{L}$ lisse on $U$. Then
$\uple{\sheaf{F}}=(\sheaf{F}_i)_{i\in J}$ is $U$-generous.
\par
Indeed, condition (1) is clear, and (2) holds by the restrictions on
$G_i^0$ (see also~\cite[9.3.6]{katz-sarnak} for the normalizer
condition, and note that in the exceptional cases indicated, all
automorphisms of the groups are inner, which implies the normalizer
condition). Also, by~\cite[Examples 1.8.1]{katz-esde}, the
representations corresponding to $i\not=j$ in $J$ are always
Goursat-adapted, and finally the restriction to the representatives of
the equivalence relation ensures the last condition.
\par
Note that for any multiplicities $n_i$, $m_i\geq 0$ for $i\in I$, we
have then geometric isomorphisms
$$
\bigotimes_{i\in I} \sheaf{F}_i^{\otimes n_i}
\otimes
\bigotimes_{i\in I} \dual(\sheaf{F}_i)^{\otimes m_i}
\simeq 
\sheaf{L}
\bigotimes_{i\in J} \sheaf{F}_i^{\otimes n'_i}
\otimes
\bigotimes_{i\in J} \dual(\sheaf{F}_i)^{\otimes m'_i}
$$
for some rank $1$ sheaf $\sheaf{L}$ (depending on $(n_i,m_i)$) and
$$
n'_i=\sum_{j\sim i}{n_j},\quad\quad
m'_i=\sum_{j\sim i}{m_j},
$$
and it is therefore possible to use many of the results for the
generous tuple $\uple{\sheaf{F}}$ to derive corresponding statements
that apply to the original one. For an example of applying this
principle, see the discussion of the Bombieri--Bourgain sums in
Section~\ref{sec-applications}.
\par
In applications of this strategy, especially in the $\SL_r$ case, the
following lemma will be useful:

\begin{lemma}\label{lm-special-coset}
  Let $\sheaf{F}$ be an $\ell$-adic sheaf modulo $p$. Then
  $\Autt(\sheaf{F})$ is either empty or is of the form
  $\xi\Autz(\sheaf{F})$ for some $\xi\in N(\Autz(\sheaf{F}))$ such
  that $\xi^2\in \Autz(\sheaf{F})$.
\end{lemma}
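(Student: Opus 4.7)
The plan is to verify every claim by direct manipulation of the defining isomorphisms of $\Autz(\sheaf{F})$ and $\Autt(\sheaf{F})$, using only that pullback commutes with tensor products and with middle-extension duality, together with the biduality $\dual(\dual(\sheaf{F})) \simeq \sheaf{F}$. The surrounding text already sketches closure and cosetness in the case $\Autz(\sheaf{F})=1$; what is genuinely new here is the normalizer property, and it will fall out once I see that $\Autt(\sheaf{F})$ is simultaneously a left coset and a right coset of $\Autz(\sheaf{F})$.

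First I would establish the relevant product rules. Given $\gamma \in \Autz(\sheaf{F})$ with $\gamma^* \sheaf{F} \simeq \sheaf{F} \otimes \sheaf{L}$ and $\gamma_1 \in \Autt(\sheaf{F})$ with $\gamma_1^* \sheaf{F} \simeq \dual(\sheaf{F}) \otimes \sheaf{L}_1$, expanding $(\gamma_1 \gamma)^* \sheaf{F} = \gamma^* \gamma_1^* \sheaf{F}$ and invoking $\gamma^* \dual(\sheaf{F}) \simeq \dual(\gamma^* \sheaf{F}) \simeq \dual(\sheaf{F}) \otimes \dual(\sheaf{L})$ shows $\gamma_1 \gamma \in \Autt(\sheaf{F})$; the symmetric computation for $(\gamma \gamma_1)^*$ gives $\gamma \gamma_1 \in \Autt(\sheaf{F})$. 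Next, for $\gamma_1, \gamma_2 \in \Autt(\sheaf{F})$, the analogous expansion of $(\gamma_1 \gamma_2)^* \sheaf{F}$ uses $\gamma_2^* \dual(\sheaf{F}) \simeq \dual(\gamma_2^* \sheaf{F}) \simeq \sheaf{F} \otimes \dual(\sheaf{L}_2)$, where biduality replaces $\dual(\dual(\sheaf{F}))$ by $\sheaf{F}$; this places $\gamma_1 \gamma_2$ in $\Autz(\sheaf{F})$. The same template yields $\gamma^{-1} \in \Autt(\sheaf{F})$ whenever $\gamma \in \Autt(\sheaf{F})$.

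Assuming $\Autt(\sheaf{F})$ is non-empty and fixing any $\xi$ in it, every other $\gamma \in \Autt(\sheaf{F})$ satisfies $\gamma \xi^{-1} \in \Autz(\sheaf{F})$ by the product rule $\Autt \cdot \Autt \subset \Autz$ applied to $\gamma$ and $\xi^{-1}$, hence $\gamma \in \Autz(\sheaf{F}) \xi$; the reverse inclusion is the closure of $\Autt(\sheaf{F})$ under multiplication by $\Autz(\sheaf{F})$, so $\Autt(\sheaf{F}) = \Autz(\sheaf{F}) \xi$. The symmetric argument using $\xi^{-1} \gamma$ gives $\Autt(\sheaf{F}) = \xi \Autz(\sheaf{F})$, and the equality $\xi \Autz(\sheaf{F}) = \Autz(\sheaf{F}) \xi$ is exactly the normalizer condition $\xi \in N(\Autz(\sheaf{F}))$. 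The remaining assertion $\xi^2 \in \Autz(\sheaf{F})$ is the product rule with $\gamma_1 = \gamma_2 = \xi$. I do not expect a serious obstacle here: the entire argument is functorial bookkeeping, and the only thing to watch is the contravariance $(\gamma \delta)^* = \delta^* \gamma^*$, so that the various rank-one twists appear on the correct side and cancel as claimed.
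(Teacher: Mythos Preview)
Your argument is correct. You establish directly that $\Autt(\sheaf{F})$ is stable under both left and right multiplication by $\Autz(\sheaf{F})$, hence is simultaneously a left coset $\xi\Autz(\sheaf{F})$ and a right coset $\Autz(\sheaf{F})\xi$ for the same $\xi$; the equality of these two cosets is exactly $\xi\in N(\Autz(\sheaf{F}))$, and $\xi^2\in\Autz(\sheaf{F})$ drops out of $\Autt\cdot\Autt\subset\Autz$.

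The paper takes a different route. It uses only the one-sided coset structure $T=\xi H$ together with the squaring property $g^2\in H$ for all $g\in T$ (both already recorded in the introduction), and then argues purely group-theoretically: $(\xi h)^2\in H$ for all $h\in H$ gives $\xi H\xi\subset H$, inversion upgrades this to $\xi H\xi=H$, and from that equality one reads off both $\xi^2\in H$ and $\xi H\xi^{-1}=H$. Your approach is more direct for the specific sets $\Autz(\sheaf{F})$ and $\Autt(\sheaf{F})$, exploiting the extra symmetry that two-sided closure is available; the paper's approach is a self-contained group-theoretic lemma that applies to any coset with the squaring property, and yields the amusing side observation (made in the remark following the proof) that $\xi H\xi\subset H$ forces equality, in contrast to the conjugation case $\xi H\xi^{-1}\subset H$.
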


For $\Autz(\sheaf{F})=1$, we recover the fact that $\Autt(\sheaf{F})$
is either empty or contains only an involution; if $\Autz(\sheaf{F})$
is equal to its normalizer, e.g., if it is a maximal and non-normal
subgroup, then it shows that $\Autt(\sheaf{F})$ is either empty or
equal to $\Autz(\sheaf{F})$, which means that $1\in\Autt(\sheaf{F})$,
or in other words that
$$
\sheaf{F}\simeq \dual(\sheaf{F})\otimes\sheaf{L}
$$
for some rank $1$ sheaf $\sheaf{L}$.  This means that, in some sense,
$\sheaf{F}$ is ``almost'' self-dual.

\begin{proof}
  More generally, consider a subgroup $H$ of a group $G$, and a coset
  $T\subset G$ of the form $T=\xi H$ that satisfies $g^2\in G$ for all
  $g\in T$ (as is the case of $T=\Autt(\sheaf{F})\subset
  G=\PGL_2(\bFp)$ for the subgroup $H=\Autz(\sheaf{F})$). 
\par
We claim first that this situation occurs if and only if $T=\xi H$ for
some $\xi\in G$ such that $\xi H\xi =H$.
\par
Indeed, $(\xi g)( \xi g)\in H$ for all $g\in H$ is equivalent to $\xi
g\xi\in H$ for all $g\in H$, i.e., to $\xi H\xi\subset H$. But then
the converse inclusion $\xi H\xi\supset H$ also holds by taking the
inverse: 
$$
\xi^{-1}H\xi^{-1}=(\xi H\xi)^{-1}\subset H^{-1}=H.
$$
\par
Now from $\xi H\xi =H$, we get first in particular $\xi^2\in H$, and
then
$$
H=\xi H\xi =\xi (H\xi^2) \xi^{-1}=\xi H \xi^{-1}
$$
implies that $\xi\in N(H)$. This gives the result in our case, and we
may also note that the converse holds, namely if $\xi\in N(H)$
satisfies $\xi^2\in H$, then
$$
\xi H\xi =\xi H \xi^2 \xi^{-1}=\xi H\xi^{-1}=H.
$$
\end{proof}

\begin{remark}
  It is amusing to note that $\xi H\xi \subset H$ implies that $\xi
  H\xi =H$, whereas $\xi H\xi^{-1}\subset H$ does not, in general,
  imply that $\xi H\xi^{-1}=H$ (see~\cite[A I, p. 134,
  Ex. 27]{bourb-alg} for a counterexample). One can show that, for
  arbitrary $(a,b)\in\Zz^2$ with $a+b\not=0$, the condition $\xi^a
  H\xi^b\subset H$, for a subgroup $H\subset G$ and an element $\xi\in
  G$, always implies $\xi^a H \xi^b=H$.
\end{remark}


Looking at the list of simple groups at the beginning of this section,
it is clear that the only significant omission is that of
$G_i^0=\SO_{2r}$ for $r\geq 2$; in that case, it is indeed not true
that the normalizer $O_{2r}$ is contained in $\Gg_m G_i^0$ (see also
Remark~\ref{rm-so} (2) below). This complication may be problematic in
some applications, since geometric monodromy groups $\Ort_{2r}$ do
occur naturally (e.g., for certain hypergeometric sheaves and for
elliptic curves over function fields, see
Section~\ref{sec-examples}). However, we have not (yet) encountered
such cases in analytic number theory, and one can expect that some
analogues of our statements could be proved using the classification
of representations of $\Ort_{2r}$ and their restrictions to
$\SO_{2r}$.

\subsection{Even rank Kloosterman sums} For $r\geq 2$ even, the normalized Kloosterman sums
$$
\hypk_r(x;p)=-\frac{1}{p^{(r-1)/2}} \sum_{t_1\cdots t_r=x}
  e\Bigl(\frac{t_1+\cdots+t_r}{p}\Bigr)
$$
are the trace functions of a self-dual bountiful sheaf $\HYPK_r$ on
$\Aa^1_{\Fp}$ with conductor uniformly bounded for all $p$. Indeed,
the geometric monodromy group is then $\Sp_r$
by~\cite[Th. 11.1]{katz-gkm}, and the projective automorphism group is
trivial by Proposition~\ref{pr-aut-hyper} below.  In addition, one
knows that the arithmetic monodromy group of $\HYPK_r$ is equal to its
geometric monodromy group, so that Corollary~\ref{cor-concrete2}
applies to this sheaf.
\par
Hence, from Corollary~\ref{cor-concrete}, we get:

\begin{corollary}\label{cor-even-kloos}
  Let $r\geq 2$ be an even integer. Let $k\geq 1$ be an integer. There
  exists a constant $C\geq 1$, depending only on $k$ and $r$ such that
  for any prime $p$, any $h\in\Fp$ and any
  $\uple{\gamma}=(\gamma_1,\ldots,\gamma_k)\in \PGL_2(\Fp)$ and
  $h\in\Fp$, such that either
\begin{itemize}
\item we have $h\not=0$, or;
\item some component of
  $\uple{\gamma}$ occurs with odd multiplicity, i.e., $\uple{\gamma}$
  is normal, as in Definition~\ref{def-normal}.
\end{itemize}
\par
Then we have
$$
\Bigl|\sums_{x\in\Fp}
\hypk_r(\gamma_1\cdot x;p)\cdots \hypk_r(\gamma_k\cdot x;p)
e\Bigl(\frac{hx}{p}\Bigr)\Bigr|
\leq Cp^{1/2}
$$
where the sum runs over $x$ such that all $\gamma_i\cdot x$ are
defined.
\end{corollary}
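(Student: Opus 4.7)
The plan is to deduce Corollary~\ref{cor-even-kloos} directly from Corollary~\ref{cor-concrete}(1), applied to the hyper-Kloosterman sheaf $\HYPK_r$. What I need to check is that, for $r \geq 2$ even, $\HYPK_r$ is a bountiful sheaf of $\Sp_r$-type (hence self-dual), with conductor bounded independently of $p$. Once that is done, the three hypotheses of Corollary~\ref{cor-concrete}(1) (bountifulness, self-duality, and either $h\neq 0$ or $\uple{\gamma}$ normal) are precisely what the present statement assumes, so the conclusion follows with a constant $C = C(k,c)$ that collapses to $C(k,r)$ since the conductor $c$ depends only on $r$.

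The sheaf $\HYPK_r$ is the $\ell$-adic middle-extension sheaf on $\Aa^1_{\Fp}$, lisse on $\Gg_m$, whose trace function is $\hypk_r(\cdot;p)$; it is pointwise pure of weight $0$ and of rank $r$. Its local structure is standard: a unipotent tame pseudo-reflection at $0$, and at $\infty$ a totally wild representation of Swan conductor $1$. From this local description, the conductor of $\HYPK_r$ is bounded by an absolute constant depending only on $r$, uniformly in $p$. For the $\Sp_r$-type property, I would cite Katz's computation of the geometric monodromy group in~\cite[Th.~11.1]{katz-gkm}, which gives $G_{\mathrm{geom}}(\HYPK_r) = \Sp_r$ for $r \geq 2$ even; in particular $\HYPK_r$ is geometrically self-dual, and the trace function $\hypk_r(\cdot;p)$ is real-valued.

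The main obstacle, and the only remaining clause in the definition of bountifulness, is the triviality of $\Autz(\HYPK_r)$. This is precisely the content of Proposition~\ref{pr-aut-hyper}, which the text announces for use here, so I would simply invoke it. Morally, the argument runs as follows: any $\gamma \in \PGL_2(\bFp)$ with $\gamma^*\HYPK_r \simeq \HYPK_r \otimes \sheaf{L}$ for some rank-one $\sheaf{L}$ must preserve the singular locus $\{0,\infty\}$, hence lies in the subgroup of $\PGL_2$ generated by the diagonal torus and the inversion $x \mapsto 1/x$; the sharp contrast between the tame (unipotent) local monodromy at $0$ and the totally wild local monodromy at $\infty$ rules out inversion, and a direct look at the local representation at $0$ rules out any non-trivial diagonal scaling.

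With these facts in hand, Corollary~\ref{cor-concrete}(1) applies verbatim to $K = \hypk_r(\cdot;p)$. For every $k$-tuple $\uple{\gamma} \in \PGL_2(\Fp)^k$ and every $h \in \Fp$ such that either $h \neq 0$ or $\uple{\gamma}$ is normal in the sense of Definition~\ref{def-normal}, this yields
$$
\Bigl| \sums_{x\in\Fp} \hypk_r(\gamma_1\cdot x;p)\cdots \hypk_r(\gamma_k\cdot x;p)\, e\Bigl(\frac{hx}{p}\Bigr) \Bigr| \leq C p^{1/2},
$$
with $C$ depending only on $k$ and the conductor of $\HYPK_r$, hence only on $k$ and $r$, as claimed.
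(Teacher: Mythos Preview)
Your proposal is correct and follows essentially the same route as the paper: verify that $\HYPK_r$ is bountiful of $\Sp_r$-type with conductor bounded in terms of $r$ (citing Katz~\cite[Th.~11.1]{katz-gkm} for the monodromy and Proposition~\ref{pr-aut-hyper} for the triviality of $\Autz$), then apply Corollary~\ref{cor-concrete}(1). The informal sketch you give for $\Autz(\HYPK_r)=1$ is slightly loose (a twist by $\sheaf{L}$ can in principle move the ramification locus, so one really uses the pseudoreflection and wild-inertia structure as in the proof of Proposition~\ref{pr-aut-hyper}), but since you ultimately just invoke that proposition, this does not affect the argument.
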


\subsection{Odd rank Kloosterman sums}
For $r\geq 2$ odd, the normalized Kloosterman sums
$$
\hypk_r(x;p)=\frac{1}{p^{(r-1)/2}} \sum_{t_1\cdots t_r=x}
  e\Bigl(\frac{t_1+\cdots+t_r}{p}\Bigr)
$$
are the trace functions of a non-self-dual bountiful sheaf $\HYPK_r$
on $\Aa^1_{\Fp}$ of $\SL_r$ type, with conductor uniformly bounded
over $p$, with special involution $x\mapsto -x$.  Indeed, the
geometric monodromy group is $\SL_r$ by~\cite[Th. 11.1]{katz-gkm}, and
the projective automorphism group is trivial by
Proposition~\ref{pr-aut-hyper} below, and we also have a geometric
isomorphism
$$
\dual(\HYPK_r)\simeq [\times (-1)]^*\HYPK_r.
$$
\par
In addition, one knows that the arithmetic monodromy group of
$\HYPK_r$ is equal to its geometric monodromy group, and hence
Corollary~\ref{cor-concrete2} also applies to this sheaf of
$\SL_r$-type.
\par
Hence, from Corollary~\ref{cor-concrete}, we get:

\begin{corollary}\label{cor-odd-kloos}
  Let $r\geq 2$ be an odd integer. Let $k\geq 1$ be an integer. There
  exists a constant $C\geq 1$, depending only on $k$ and $r$ such that
  for any prime $p$, any $h\in\Fp$ and any
  $\uple{\gamma}=(\gamma_1,\ldots,\gamma_k)\in \PGL_2(\Fp)^k$ and
  $\uple{\sigma}=(\sigma_1,\ldots,\sigma_k)\in \Aut(\Cc/\Rr)^k$, such
  that either
\begin{itemize}
\item we have $h\not=0$, or;
\item the pair $(\uple{\gamma},\uple{\sigma})$ is $r$-normal with
  respect to $x\mapsto -x$.
\end{itemize} 
\par
Then we have
$$
\Bigl|\sums_{x\in\Fp} \hypk_r(\gamma_1\cdot x;p)^{\sigma_1}\cdots
\hypk_r(\gamma_k\cdot x;p)^{\sigma_k} 
e\Bigl(\frac{hx}{p}\Bigr)\Bigr| \leq Cp^{1/2}
$$
where the sum runs over $x$ such that all $\gamma_i\cdot x$ are
defined.
\end{corollary}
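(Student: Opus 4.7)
The plan is to apply Corollary~\ref{cor-concrete}(2) to the sheaf $\HYPK_r$, after recording its relevant structural properties, and then to handle the finitely many small primes $p \leq r$ by a trivial bound.

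First I would assemble the needed input on $\HYPK_r$ for odd $r \geq 3$. The sheaf is a middle extension, pointwise pure of weight zero, of rank $r$, with conductor bounded in terms of $r$ alone. Its geometric monodromy group is $\SL_r$ by \cite[Th.~11.1]{katz-gkm}, its projective automorphism group $\Autz(\HYPK_r)$ is trivial by Proposition~\ref{pr-aut-hyper}, and there is a geometric isomorphism $\dual(\HYPK_r)\simeq [\times (-1)]^*\HYPK_r$, which says precisely that $x\mapsto -x$ lies in $\Autt(\HYPK_r)$. Combined with $\Autz(\HYPK_r)=1$, Lemma~\ref{lm-special-coset} guarantees that $x \mapsto -x$ is the unique special involution of $\HYPK_r$. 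In particular, $\HYPK_r$ is a bountiful sheaf of $\SL_r$-type whose special involution matches the one appearing in the hypothesis.

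With this in hand, Corollary~\ref{cor-concrete}(2) applies verbatim for any prime $p > r$: the alternatives $h \neq 0$, and $(\uple{\gamma}, \uple{\sigma})$ being $r$-normal with respect to $x\mapsto -x$, are exactly the conditions of that corollary in the presence of a special involution, and so
$$
\Bigl|\sums_{x\in\Fp} \hypk_r(\gamma_1\cdot x;p)^{\sigma_1}\cdots \hypk_r(\gamma_k\cdot x;p)^{\sigma_k} e\Bigl(\frac{hx}{p}\Bigr)\Bigr| \leq C p^{1/2},
$$
with $C$ depending only on $k$ and the conductor of $\HYPK_r$, hence only on $k$ and $r$. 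For the finitely many primes $p \leq r$, Deligne's pointwise bound $|\hypk_r(x;p)|\leq r$ yields the crude estimate $O(p\,r^k) = O(r^{k+1})$ for the whole sum, which is absorbed into $C$ by enlarging the constant.

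The main---though modest---point of care is the bookkeeping around the involution: one must both exhibit $x\mapsto -x$ in $\Autt(\HYPK_r)$ (via the standard functional equation for hyper-Kloosterman sums) and rule out any other non-trivial element via Lemma~\ref{lm-special-coset}, so that the hypothesis of the present statement corresponds exactly to the special-involution alternative made available by Corollary~\ref{cor-concrete}(2). Once this identification is in place, the rest of the argument is essentially mechanical.
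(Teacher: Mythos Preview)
Your proposal is correct and follows essentially the same route as the paper: verify that $\HYPK_r$ is bountiful of $\SL_r$-type with special involution $x\mapsto -x$ (via Katz's monodromy computation, Proposition~\ref{pr-aut-hyper}, and the functional equation), then invoke Corollary~\ref{cor-concrete}(2). The one small difference is how the restriction $p>r$ in Corollary~\ref{cor-concrete}(2) is disposed of: you handle the finitely many primes $p\leq r$ by a trivial bound, whereas the paper can appeal to the Remark following the proof of Theorem~\ref{th-main1}, which says the condition $p>r$ is unnecessary whenever $\xi^*\sheaf{F}\simeq\dual(\sheaf{F})$ holds without twist---and this is exactly the case for $\HYPK_r$. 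Either device works.
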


Concretely, recall (see~(\ref{eq-r-normal}) and the examples
following) that to say that the pair $(\uple{\gamma},\uple{\sigma})$
is $r$-normal with respect to $x\mapsto -x$ means that for \emph{some}
component $\gamma$ of $\uple{\gamma}$, we have
$$
r\nmid (a_1+a_2)-(b_1+b_2),
$$
where:
\begin{itemize}
\item $a_1$ is the number of $i$ with $\gamma=\gamma_i$ and
  $\sigma_i=1$
\item $a_2$ is the number of $i$ with $\gamma=\begin{pmatrix}-1&0\\0&1
  \end{pmatrix}\gamma_i$ and $\sigma_i=c$
\item $b_1$ is the number of $i$ with $\gamma=\gamma_i$ and
  $\sigma_i=c$
\item $b_2$ is the number of $i$ with $\gamma=\begin{pmatrix}-1&0\\0&1
  \end{pmatrix}\gamma_i$ and $\sigma_i=1$.
\end{itemize}

\subsection{Hypergeometric sums}

Hyper-Kloosterman sums have been generalized by
Katz~\cite[Ch. 8]{katz-esde} to hypergeometric sums, which are
analogues of general hypergeometric functions. Some give rise to
bountiful sheaves, and many to generous tuples. We recall the
definition: given a prime number $p$, integers $m$, $n\geq 1$, with
$m+n\geq 1$, and tuples $\uple{\chi}=(\chi_i)_{1\leq i\leq n}$ and
$\uple{\rho}=(\rho_j)_{1\leq j\leq m}$ of multiplicative characters of
$\Fpt$, the hypergeometric sum $\hypg(\uple{\chi},\uple{\rho},t;p)$ is
defined (see~\cite[8.2.7]{katz-esde}) for $t\in\Fp$ by
$$
\hypg(\uple{\chi},\uple{\rho},t;p)= \frac{(-1)^{n+m-1}}{p^{(n+m-1)/2}}
\sum_{N(\uple{x})=tN(\uple{y})}
\prod_{i}\chi_i(x_i)\overline{\prod_{j}\rho_j(y_j)}
e\Bigl(\frac{T(\uple{x})-T(\uple{y})}{p}\Bigr)
$$
where
\begin{gather*}
  N(\uple{x})=x_1\cdots x_n,\quad\quad N(\uple{y})=y_1\cdots y_m,\\
  T(\uple{x})=x_1+\cdots +x_n,\quad\quad T(\uple{y})=y_1+\cdots +y_m
\end{gather*}
so that the sum is over all $(n+m)$-tuples
$(\uple{x},\uple{y})\in\Fp^{n+m}$ such that
$$
x_1\cdots x_n=ty_1\cdots y_m.
$$
\par
If $n=r$, $m=0$, and $\chi_i=1$ for all $i$, then we recover the
Kloosterman sums $\hypk_r(t;p)$. If $n=2$, $m=0$, and $\chi_2=1$ but
$\chi_1$ is non-trivial, we obtain Sali\'e-type sums. This indicates
that such sums should arise naturally in formulas like the Voronoi
summation formula for automorphic forms with non-trivial nebentypus.
\par
Katz shows (see~\cite[Th. 8.4.2]{katz-esde}) that if no character
$\chi_i$ coincides with a character $\rho_j$ (in which case one says
that $\uple{\chi}$ and $\uple{\rho}$ are \emph{disjoint}), then for
any $\ell\not=p$, there exists an irreducible $\ell$-adic
middle-extension sheaf $\HYPG(\uple{\chi},\uple{\rho})$ on
$\Aa^1_{\Fp}$, of weight $0$, with trace function given by
$\hypg(\uple{\chi},\uple{\rho},t;p)$. This sheaf is lisse on $\Gg_m$,
except if $m=n$, in which case it is lisse on $\Gg_m-\{1\}$. It has
rank $\max(m,n)$. Moreover, the conductor of
$\HYPG(\uple{\chi},\uple{\rho})$ is bounded in terms of $m$ and $n$
only.
\par
The basic results of Katz concerning the geometric monodromy group $G$
of the hypergeometric sheaf $\HYPG(\uple{\chi},\uple{\rho})$ depend on
the following definitions of exceptional tuples of characters
(see~\cite[Cor. 8.9.2, 8.10.1]{katz-esde}):

\begin{definition}
Let $k$ be a finite field and let $\uple{\chi}$ and $\uple{\rho}$ be
an $n$-tuple and an $m$-tuple of characters of $k^{\times}$. 
\par
(1) For $d\geq 1$, the pair $(\uple{\chi},\uple{\rho})$ is
$d$-Kummer-induced if $d\mid (n,m)$ and if there exist $n/d$ and
$m/d$-tuples $\uple{\chi}^*$ and $\uple{\rho}^*$ such that
$\uple{\chi}$ consists of all characters $\chi$ such that $\chi^d$ is
a component of $\uple{\chi}^*$, and $\uple{\rho}$ consists of all
characters $\rho$ such that $\rho^d$ is a component of
$\uple{\rho}^*$.
\par
(2) Assume $n=m$. For integers $a$, $b\geq 1$ such that $a+b=n$, the
pair $(\uple{\chi},\uple{\rho})$ is $(a,b)$-Belyi-induced if there
exist characters $\alpha$ and $\beta$ with $\beta\not=1$ such that 
$\uple{\chi}$ consists of all characters $\chi$ such that either
$\chi^a=\alpha$ or $\chi^b=\beta$, and if $\uple{\rho}$ consists of
all characters $\rho$ such that $\rho^{n}=\alpha\beta$.
\par
(3) Assume $n=m$. For integers $a$, $b\geq 1$ such that $a+b=n$, the
pair $(\uple{\chi},\uple{\rho})$ is $(a,b)$-inverse-Belyi-induced if
and only if $(\overline{\uple{\rho}},\overline{\uple{\chi}})$ is
$(a,b)$-Belyi-induced.
\end{definition}

We say that $(\uple{\chi},\uple{\rho})$ is Kummer-induced
(resp. Belyi-induced, inverse-Belyi-induced) if there exists some
$d\geq 2$ (resp. some $a$, $b\geq 1$) such that the pair is
$d$-Kummer-induced (resp. $(a,b)$-Belyi-induced,
$(a,b)$-inverse-Belyi-induced).

We then have the following:
\begin{itemize}
\item If $n=m$, let $\Lambda$ denote the multiplicative character
$$
\Lambda=\prod_{i}\chi_i\overline{\rho_i}.
$$
\par
Assume that $(\uple{\chi},\uple{\rho})$ is neither Kummer-induced,
Belyi-induced, nor inverse-Belyi-induced. Then $G^0$ is either
trivial, $\SL_n$, $\SO_n$ or $\Sp_n$; if $\Lambda=1$, it is either
$\SL_n$ or $\Sp_n$, if $\Lambda\not=1$ but $\Lambda^2=1$, then $G^0$
is either $1$ or $\SO_n$ or $\SL_n$, and if $\Lambda^2\not=1$, then
$G^0$ is either $1$ or $\SL_n$ (see~\cite[Th. 8.11.2]{katz-esde}). The
problem of determining which case occurs is discussed by Katz; most
intricate is the criterion for $G^0$ to be trivial (see~\cite[\S
8.14--8.17]{katz-esde}), which is however applicable in practice.
\item If $n\not=m$, let $r=\max(n,m)$ be the rank of the sheaf. Assume
  that $(\uple{\chi},\uple{\rho})$ is not Kummer induced. Then,
  provided $p>2\max(n,m)+1$, and $p$ does not divide an explicit
  positive integer, we have: $G^0=\SL_r$ if $n-m$ is odd (and
  $G\not=G^0$ if $|n-m|=1$); $G^0=\SL_r$, $\SO_r$ or $\Sp_r$ if $n-m$
  is even and either $r\notin \{7,8,9\}$ or $|n-m|\not=6$
  (see~\cite[Th. 8.11.3]{katz-esde}). Here also, more precise criteria
  for which $G^0$ arises exist, as well as a classification of the few
  exceptional possibilities when $|n-m|=6$ and $r\in\{6,7,8\}$.
\end{itemize}

\begin{example}
  If $\uple{\rho}$ is the empty tuple, $n\geq 2$ and $\uple{\chi}$ is
  an $n$-tuple where all components are trivial, then it follows
  immediately from the definition that $(\uple{\chi},\uple{\rho})$ is
  not Kummer-induced. Thus the last result recovers, for $p$ large
  enough in terms of $n$, the fact that the geometric monodromy group
  of $\HYPK_n$ contains $\SL_n$ if $n$ is odd, and contains either
  $\SO_n$ or $\Sp_n$ if $n$ is even. 
\end{example}

In order to apply the results of the previous section, it is of course
very useful to have some information concerning the projective
automorphism groups of hypergeometric sheaves.  Many cases are
contained in the following result:

\begin{proposition}\label{pr-aut-hyper}
  \emph{(1)} Let $\uple{\chi}_1$, $\uple{\rho}_1$ and $\uple{\chi}_2$,
  $\uple{\rho}_2$ be any $n_1$-tuple (resp. $m_1$-tuple, $n_2$-tuple,
  $m_2$-tuple) with $\uple{\chi}_1$ disjoint from $\uple{\rho}_1$ and
  $\uple{\chi}_2$ disjoint from $\uple{\rho}_2$, and with $m_1+n_1\geq
  1$, $m_2+n_2\geq 1$. Let $a\in\Fpt$. Then we have a geometric
  isomorphism
\begin{equation}\label{eq-hyp-iso}
[\times a]^*\HYPG(\uple{\chi}_1,\uple{\rho}_1)\simeq
\HYPG(\uple{\chi}_2,\uple{\rho}_2),
\end{equation}
if and only if $a=1$ and $\uple{\chi}_1\sim\uple{\chi}_2$ and
$\uple{\rho}_1\sim \uple{\rho}_2$.
\par
\emph{(2)} Let $m\not=n$ with $m+n\geq 1$ be integers with
$\max(m,n)\geq 2$ and $(m,n)\not=(1,2)$, $(m,n)\not=(2,1)$. Let
$\uple{\chi}$ and $\uple{\rho}$ be disjoint tuples of characters of
$\Fpt$. The projective automorphism group
$\Autz(\HYPG(\uple{\chi},\uple{\rho}))$ is then trivial.
\par
\emph{(3)} With notation as in \emph{(2)}, the set
$\Autt(\HYPG(\uple{\chi},\uple{\rho}))$ is non-empty if and only if
the integer $n-m$ is odd, and the tuples $\uple{\chi}$ and
$\uple{\rho}$ are both invariant under inversion. In this case, the
special involution is $x\mapsto -x$, i.e., we have
$$ [\times
(-1)]^*\HYPG(\uple{\chi},\uple{\rho})\simeq
\dual(\HYPG(\uple{\chi},\uple{\rho})).
$$
\par
\emph{(4)} If $n=m\geq 2$, then for any disjoint $n$-tuples
$(\uple{\chi},\uple{\rho})$, the group
$\Autz(\HYPG(\uple{\chi},\uple{\rho}))$ is a subgroup of the finite
group
$$
\Gamma=\Bigl\{
1,
\begin{pmatrix}
0&1\\1&0
\end{pmatrix},
\begin{pmatrix}
-1&1\\0&1
\end{pmatrix},
\begin{pmatrix}
0&1\\-1&1
\end{pmatrix},
\begin{pmatrix}
1&0\\1&-1
\end{pmatrix},
\begin{pmatrix}
1&-1\\1&0
\end{pmatrix}
\Bigr\}\subset \PGL_2(\bFp).
$$
\par
\emph{(5)} With notation as in~\emph{(4)}, the set
$\Autt(\HYPG(\uple{\chi},\uple{\rho}))$ is either empty or is a subset
of $\Gamma$, which is of the form $T=\xi H$ for some subgroup
$H\subset \Gamma$ and some $\xi\in N_{\Gamma}(H)$ such that $\xi^2\in
H$.
\end{proposition}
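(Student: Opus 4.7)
The proof draws on Katz's rigidity of hypergeometric sheaves and the precise description of their local monodromy representations at $0$, $1$, and $\infty$ (see \cite[Ch.~8]{katz-esde}). Recall that $\HYPG(\uple{\chi}, \uple{\rho})$ is lisse on $\Gg_m$ when $n \neq m$ and on $\Gg_m \setminus \{1\}$ when $n = m$; it is tame at $0$ with the multiset $\{\chi_i\}$ as eigenvalues of a topological generator of inertia, tame at $1$ (when $n = m$) with a pseudo-reflection determined by $\prod_i \chi_i / \prod_j \rho_j$, and at $\infty$ exhibits a tame part with eigenvalues $\{\rho_j\}$ together with, when $n \neq m$, a wild part of slope $1/|n-m|$ and Swan conductor $1$.

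For (1), the isomorphism forces $[\times a]$ to preserve the singular locus. When $n_1 = m_1$, this locus contains $1$, so $a = 1$; when $n_1 \neq m_1$ the singular locus is $\{0, \infty\}$ regardless of $a$, and we exploit instead the wild part at $\infty$. The latter is (a direct image of) a sheaf of the form $\sheaf{L}_{\psi((n-m)t^{1/(n-m)})}$ tensored with a tame rank-one sheaf; its geometric isomorphism class is rotated by $[\times a]^*$ through a non-trivial factor $a^{1/(n-m)}$ that no tame rank-one twist can absorb, forcing $a = 1$. With $a = 1$, matching tame local eigenvalues at $0$ and at $\infty$ yields $\uple{\chi}_1 \sim \uple{\chi}_2$ and $\uple{\rho}_1 \sim \uple{\rho}_2$.

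For (2), any $\gamma \in \Autz(\HYPG(\uple{\chi}, \uple{\rho}))$ permutes the singular locus $\{0, \infty\}$, so is either $[\times a]$ or of the form $x \mapsto a/x$. The inversion case would force $\gamma^*\HYPG$ to carry wild ramification at $0$ and only tame ramification at $\infty$, opposite to $\HYPG \otimes \sheaf{L}$ for any rank-one sheaf $\sheaf{L}$ on $\Gg_m$: under the Euler--Poincar\'e constraint, no such $\sheaf{L}$ can simultaneously absorb wildness at $0$ and create matching wildness at $\infty$ once $\max(n,m) \geq 2$ and $(n,m) \notin \{(1,2),(2,1)\}$. Hence $\gamma = [\times a]$, and part~(1), applied after stripping off the twist $\sheaf{L}$ (which must be tame everywhere by comparison at $0$), forces $a = 1$, so $\gamma = 1$. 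For (3), one uses Katz's formula of the shape $\dual\HYPG(\uple{\chi}, \uple{\rho}) \simeq [\times (-1)^{n+m}]^*\HYPG(\overline{\uple{\chi}}, \overline{\uple{\rho}})$, which reduces the determination of $\Autt$ to an instance of (1): the inversion alternative is ruled out as in (2), while $\gamma = [\times a]$ forces $a = (-1)^{n+m}$ together with $\uple{\chi} \sim \overline{\uple{\chi}}$ and $\uple{\rho} \sim \overline{\uple{\rho}}$, and the resulting $\gamma$ is a non-trivial special involution precisely when $(-1)^{n+m} = -1$, i.e., when $n - m$ is odd.

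For (4), when $n = m$ the singular locus is $\{0, 1, \infty\}$, so any $\gamma \in \Autz(\HYPG)$ belongs to the stabilizer of this three-element set in $\PGL_2(\bFp)$, which is precisely the anharmonic group $\Gamma \simeq S_3$ of order $6$ listed in the statement. Part (5) is then a direct application of Lemma~\ref{lm-special-coset} with $H = \Autz(\HYPG) \subset \Gamma$. The main technical obstacle is the Swan-conductor and wild-slope bookkeeping in (2)--(3) needed to rule out inversions and to extract the twist $\sheaf{L}$ when working modulo a rank-one ambiguity; once this local analysis is completed, the remaining assertions follow formally from rigidity and from the preservation of singular loci.
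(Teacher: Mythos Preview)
Your overall strategy matches the paper's: constrain $\gamma$ by comparing local monodromy at the singular points, then invoke the rigidity expressed in part~(1). The paper differs slightly in order (it first reads off $\uple{\chi}_1\sim\uple{\chi}_2$ and $\uple{\rho}_1\sim\uple{\rho}_2$ from the tame characters at $0$ and $\infty$, then obtains $a=1$ from~\cite[Lemma~8.5.4]{katz-esde} and the Euler--Poincar\'e characteristic $-1$), and in~(2) it shows directly that $\gamma^{-1}(0)=0$ and $\gamma^{-1}(\infty)=\infty$ via a dimension count of inertia invariants, rather than first allowing inversion and then ruling it out. These are cosmetic differences.

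There are, however, two genuine gaps in your argument. First, in~(2) you assert that the twist $\sheaf{L}$ ``must be tame everywhere by comparison at $0$'', and then apply~(1). But comparison at $0$ only shows $\sheaf{L}$ is tame at~$0$; you still need $\sheaf{L}$ lisse on $\Gg_m$ and tame at~$\infty$, i.e.\ that $\sheaf{L}$ is a Kummer sheaf $\sheaf{L}_\Lambda$, before you can absorb it into the hypergeometric data via $\HYPG(\uple{\chi},\uple{\rho})\otimes\sheaf{L}_\Lambda\simeq\HYPG(\Lambda\uple{\chi},\Lambda\uple{\rho})$ and invoke~(1). The paper obtains this from~\cite[Lemma~8.11.7.1]{katz-esde}, and this is precisely where the hypotheses $(n,m)\neq(2,1),(1,2)$ enter; your sketch does not indicate where these excluded cases are used. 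The same issue recurs in~(3), where you silently drop the twist and conclude $\uple{\chi}\sim\overline{\uple{\chi}}$ rather than $\uple{\chi}\sim\Lambda\overline{\uple{\chi}}$.

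Second, in~(4) you say that any $\gamma\in\Autz$ preserves the singular locus $\{0,1,\infty\}$. This is not automatic: a priori $\sheaf{L}$ could introduce new singularities or cancel existing ones in $\HYPG\otimes\sheaf{L}$. The paper handles this by observing (using $n\geq 2$ and the pseudoreflection description of local monodromy at~$1$) that the ramification of $\HYPG$ at each of $0,1,\infty$ cannot be killed by a rank-one twist. Your argument in~(2) for ruling out inversion has the same lacuna: a rank-one $\sheaf{L}$ can certainly be wildly ramified at both $0$ and $\infty$, so ``absorbing wildness'' needs a slope argument (a rank-one sheaf has integer slopes, while the wild part of $\HYPG$ at $\infty$ has slope $1/|n-m|$), which you gesture at but do not carry out.
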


\begin{proof}
  (1) For all $a\in\bFp^{\times}$, the components of $\uple{\chi}$
  (resp. $\uple{\rho}$) can be recovered from the sheaf $[\times
  a]^*\HYPG(\uple{\chi},\uple{\rho})$ as the tame characters occuring
  in the representation of the inertia group at $0$ (resp. at
  $\infty$) corresponding to this sheaf, and the multiplicity appears
  as the size of the associated Jordan block (see~\cite[Th. 8.4.2 (6),
  (7), (8)]{katz-esde}). Thus~(\ref{eq-hyp-iso}) is only possible if
  $\uple{\chi}_1\sim\uple{\chi}_2$ and $\uple{\rho}_1\sim
  \uple{\rho}_2$.
\par
We assume this is the case now, i.e., that 
$$
[\times a]^*\HYPG(\uple{\chi},\uple{\rho})\simeq 
\HYPG(\uple{\chi},\uple{\rho}).
$$
\par
We then obtain $a=1$ from~\cite[Lemma 8.5.4]{katz-esde} and the fact
that the Euler-Poincar\'e characteristic of a hypergeometric sheaf is
$-1$.
\par
(2) We may assume that $n>m$, using inversion otherwise. Assume that
$\gamma\in\PGL_2(\bFp)$ is such that
$$
\gamma^*\HYPG(\uple{\chi},\uple{\rho})\simeq
\HYPG(\uple{\chi},\uple{\rho}) \otimes\sheaf{L}
$$
for a rank $1$ sheaf $\sheaf{L}$.  By comparing ramification behavior
we see that $\gamma$ must be diagonal (if $\gamma^{-1}(0)\not=0$, then
$\sheaf{L}$ must be tamely ramified at $0$ to have the tensor product
tamely ramified at $\gamma^{-1}(0)$, as
$\gamma^*\HYPG(\uple{\chi},\uple{\rho})$ is; but then the inertia
invariants at $\gamma^{-1}(0)$ are zero for the tensor product, a
contradiction to~\cite[Th. 8.4.2 (6)]{katz-esde}, and the case of
$\gamma^{-1}(\infty)\not=\infty$ gives a similar contradiction).
\par
Thus $\gamma\in\Autz(\HYPG(\uple{\chi},\uple{\rho}))$ implies a
geometric isomorphism
$$
[\times a]^*\HYPG(\uple{\chi},\uple{\rho}) \simeq
\HYPG(\uple{\chi},\uple{\rho})\otimes\sheaf{L}
$$
on some dense open set $j\,:\, U\injecte \Gg_m$.  By~\cite[Lemma
8.11.7.1]{katz-esde}, under the current assumption $(n,m)\not=(2,1)$,
this implies that $\sheaf{L}\simeq \sheaf{L}_{\Lambda}$ for some
multiplicative character $\Lambda$.
\par
But then we have
$$
\HYPG(\uple{\chi},\uple{\rho})\otimes\sheaf{L}_{\Lambda}\simeq
\HYPG(\Lambda\uple{\chi},\Lambda\uple{\rho})
$$
by~\cite[8.3.3]{katz-esde} where
$\Lambda\uple{\chi}=(\Lambda\chi_i)_i$ and
$\Lambda\uple{\rho}=(\Lambda\rho_j)_j$. We are therefore reduced to a
geometric isomorphism
$$
[\times a]^*\HYPG(\uple{\chi},\uple{\rho})\simeq
\HYPG(\Lambda\uple{\chi},\Lambda\uple{\rho}),
$$
and by (1), it follows that $a=1$, i.e., $\gamma=1$.
\par
(3) As in the previous case, we see that any element $\gamma\in
\Autt(\HYPG(\uple{\chi},\uple{\rho}))$ must be diagonal, so that
$\gamma\cdot x=ax$ for some $a\in\Fpt$. Since $\gamma$, if it exists,
is an involution, we obtain $a^2=1$, and therefore the only
possibility for the special involution is $x\mapsto -x$.
\par
We now assume that
$$
[\times (-1)]^*\HYPG(\uple{\chi},\uple{\rho})\simeq
\dual(\HYPG(\uple{\chi},\uple{\rho}))\otimes\sheaf{L}
$$
for some rank $1$ sheaf $\sheaf{L}$.
\par
Again by~\cite[Lemma 8.11.7.1]{katz-esde}, the sheaf $\sheaf{L}$ is a
Kummer sheaf $\sheaf{L}_{\Lambda}$. We have
$$
\dual(\HYPG(\uple{\chi},\uple{\rho}))\otimes\sheaf{L} \simeq
\HYPG_{\bar{\psi}}(\overline{\uple{\chi}},\overline{\uple{\rho}})
\otimes \sheaf{L}_{\Lambda} \otimes [\times (-1)^{n-m}]^*
\HYPG(\Lambda\overline{\uple{\chi}},\Lambda\overline{\uple{\rho}})
$$
by combining~\cite[8.3.3]{katz-esde} and~\cite[Lemma 8.7.2]{katz-esde}
(using also the fact that Kummer sheaves are geometrically
multiplication invariant). Thus the assumption means that
$$
[\times (-1)]^*\HYPG(\uple{\chi},\uple{\rho})\simeq [\times
(-1)^{n-m}]^*\HYPG(\Lambda\overline{\uple{\chi}},
\Lambda\overline{\uple{\rho}}).
$$
\par
If $n-m$ is even, this can not happen by (1); if $n-m$ is odd, on the
other hand, this happens if and only if
$\Lambda\overline{\uple{\chi}}\sim\uple{\chi}$ and
$\Lambda\overline{\uple{\rho}}\sim\uple{\rho}$, as claimed.
\par
(4) Let $n=m\geq 2$ and $\gamma\in
\Autz(\HYPG(\uple{\chi},\uple{\rho}))$ so that
$$
\gamma^*\HYPG(\uple{\chi},\uple{\rho})\simeq
\HYPG(\uple{\chi},\uple{\rho})\otimes\sheaf{L}
$$
for some rank $1$ sheaf $\sheaf{L}$. The right-hand side is ramified
at $\{0,1,\infty\}$ (because $n\geq 2$ and the description of local
monodromy from~\cite[Th. 8.4.2 (8)]{katz-esde} shows that the
ramification of the hypergeometric sheaf cannot be eliminated by
tensoring with a character), and hence $\gamma$ must permute the
points $0$, $1$, $\infty$. This shows that $\gamma\in\Gamma$.
\par
(5) Arguing as in (4) with an isomorphism
$$
\gamma^*\HYPG(\uple{\chi},\uple{\rho})\simeq
\dual(\HYPG(\uple{\chi},\uple{\rho}))\otimes\sheaf{L}
$$
we see that $\Autt(\HYPG(\uple{\chi},\uple{\rho}))\subset
\Gamma$. Then the statement is just the conclusion of
Lemma~\ref{lm-special-coset} in this special case.
\end{proof}


\begin{remark}\label{rm-mellin}
  (1) A different approach, which is natural from the analytic point
  of view, would be to study such questions by means, for instance, of
  the sums
$$
S_E=\sum_{t\in E^{\times}} \hypg(\uple{\chi},\uple{\rho},t;E)
\overline{\hypg(\uple{\chi},\uple{\rho},at;E)}
$$
for finite extensions $E/k$, where
$\hypg(\uple{\chi},\uple{\rho},t;E)$ denotes the natural extension of
hypergeometric sums to $E$, using the additive character $\psi_E$
defined by composing $x\mapsto e(x/p)$ with the trace from $E$ to
$\Fp$.  The Riemann Hypothesis implies that if~(\ref{eq-hyp-iso})
holds, then
$$
\liminf_{|E|\ra +\infty}\frac{|S_E|}{|E|}>0.
$$
\par
Using the Plancherel formula and the fact that the Mellin transform of
a hypergeometric sum is a product of Gauss sums
(see~\cite[8.2.8]{katz-esde}), one gets for $a=1$ the formula
\begin{equation}\label{eq-sp-mellin}
S_E=\frac{1}{|E^{\times}|} \sum_{\Lambda}\prod_{i}
g(\psi_E,\Lambda\chi_{1,i}) \prod_{i}
g(\psi_E,\Lambda\overline{\chi_{2,i}}) \prod_{j}
g(\psi_E,\Lambda\rho_{1,j}) \prod_{j}
g(\psi_E,\Lambda\overline{\rho_{2,j}})
\end{equation}
where $\Lambda$ runs over multiplicative characters of $E^{\times}$
and
$$
g(\psi,\chi)=\sum_{x}{\chi(x)\psi(x)}
$$
denotes the Gauss sums. But one can get the fact that
$$
\lim \frac{S_E}{|E|}=0
$$
unless $\uple{\chi}_1\sim\uple{\chi}_2$ and $\uple{\rho}_1\sim
\uple{\rho}_2$, using Katz's simultaneous equidistribution theorem for
angles of Gauss sums (see~\cite[Th. 9.5]{katz-gkm}
or~\cite[Cor. 20.2]{katz-mellin}).  The case of $a\not=1$ is however
not as easy with this approach.
\par
It is however very interesting to note how the
expression~(\ref{eq-sp-mellin}) for $S_E$ is a multiplicative analogue
of our typical ``sums of products'', the sum being indexed by
multiplicative characters, and involving products of functions defined
on the set of multiplicative characters. From this point of view, the
proof of equidistribution of Gauss sums in~\cite{katz-mellin} is the
most natural, as it relies on the analogue of the geometric monodromy
group discovered by Katz in this context (using Tannakian formalism
among other things), although the relevant group is a direct product
of copies of $\Gg_m$ (see~\cite[Lemma 20.1]{katz-mellin}), which we
never handle in this paper.
\par
It would be possible (and of some interest, although we do not have
concrete applications to analytic number theory in mind at the moment)
to extend the theory of ``sums of products'' to deal with Mellin
transforms of trace functions instead of trace functions, with Katz's
symmetry group replacing the geometric monodromy group.
\par
(2) It may be that a hypergeometric sheaf satisfies
$$
\dual(\HYPG(\uple{\chi},\uple{\rho}))\simeq
\HYPG(\uple{\chi},\uple{\rho})\otimes\sheaf{L},\quad\text{or}\quad
\HYPG(\uple{\chi},\uple{\rho}))\simeq
\HYPG(\uple{\chi},\uple{\rho})\otimes\sheaf{L},
$$
for some rank $1$ sheaf $\sheaf{L}$; this is however a different
question than the one addressed for applications to sums of products.
For instance, we have geometric isomorphisms
$$
\HYPG((1,\chi_1),(\chi_2,\chi_3))\simeq \sheaf{L}_{\chi_4(X-1)}\otimes
\HYPG((1,\chi_1),(\overline{\chi}_2\chi_1,\overline{\chi}_3\chi_1))
$$
for multiplicative characters $\chi_1$, $\chi_2$, $\chi_3$ with
$\chi_1\notin \{\chi_2,\chi_3\}$ and
$$
\chi_4=\chi_2\chi_3\overline{\chi}_1
$$
(analogues of the Euler identity~\cite[9.131.1 (3)]{gradstheyn} for the
$_{2}F_{1}$-hypergeometric function). 
If $\chi_1$ is of order $2$, $\chi_2$ is of order $4$ such that
$\chi_2^2=\chi_1$ and $\chi_3=\chi_1\chi_2$, then we obtain
$$
\HYPG((1,\chi_1),(\chi_2,\chi_3))\simeq \sheaf{L}_{\chi_1(X-1)}\otimes
\HYPG((1,\chi_1),(\chi_2,\chi_3)),
$$
since $\chi_4=\chi_1$ in that case.
\par
(3) At least some of the restrictions on $(n,m)$ in
Proposition~\ref{pr-aut-hyper} are necessary. For instance, for
$(n,m)=(2,2)$, we have geometric isomorphisms
$$
\gamma^*\HYPG((1,\chi_1),(\chi_2,\overline{\chi}_3\chi_1))
\simeq \sheaf{L}_{\chi_2(X-1)}\otimes
\HYPG((1,\chi_1),(\chi_2,\chi_3))
$$
where
$$
\gamma=\begin{pmatrix}1&0\\1&-1
\end{pmatrix},\quad\text{i.e.}\quad
\gamma\cdot x=\frac{x}{x-1}
$$
(analogue of~\cite[9.131.1 (1)]{gradstheyn}). If $\chi_3$ satisfies
$\chi_3^2=\chi_1$, and $\chi_2$ is non-trivial, we deduce that
$\gamma\in\Autz(\HYPG((1,\chi_1),(\chi_2,\chi_3))$.
\par
(4) One can be more precise concerning the case $m=n$, for any given
concrete choice of characters, but we did not attempt to obtain a full
classification. For instance, concerning
$\Autt(\HYPG(\uple{\chi},\uple{\rho}))$ in that case, the reader can
easily classify the possibilities of subgroups $H\subset \Gamma$ and
$\xi\in N_{\Gamma}(H)$ such that $\xi^2\in H$. Thus any concrete case
can most likely be analyzed in order to determine exactly
$\Autz(\HYPG(\uple{\chi},\uple{\rho}))$ and
$\Autt(\HYPG(\uple{\chi},\uple{\rho}))$.
\end{remark}

In view of these results, one can feel confident that sums of products
of hypergeometric sums can be handled using the results of this paper,
at least in many cases. The trickiest case would be when
$G^0=\Ort_{r}$ with $r$ even (in view of Remark~\ref{rm-so} (2)),
which does occur (e.g., if $n-m\geq 2$ is even, $n$ is even, the
tuples $\uple{\chi}$ and $\uple{\rho}$ are stable under inversion, and
$\prod\chi_i$ is non-trivial of order $2$, see~\cite[Th. 8.8.1, Lemma
8.11.6]{katz-esde}).

\subsection{Fourier transforms of multiplicative characters}

Many examples of sheaves with suitable monodromy groups are discussed
in~\cite[7.6--7.14]{katz-esde}, arising from Fourier transforms of
other (rather simple) sheaves. We discuss one illustrative case,
encouraging the reader to look at Katz's results if she encounters
similar-looking constructions.
\par
We consider a polynomial $g\in\Fp[X]$ and a non-trivial multiplicative
character $\chi$ modulo $p$. We assume that no root of $g$ is of order
divisible by the order of $\chi$. We then form the sheaf
$$
\sheaf{F}_{\chi,g}=\ft_{\psi}(\sheaf{L}_{\chi(g)})
$$
i.e., the Fourier transform of the Kummer sheaf with trace function
$\chi(g(x))$, where $\psi$ is the additive character $e(\cdot/p)$. The
trace function of $\sheaf{F}_{\chi,g}$ is
$$
K_{\chi,g}(x)=-\frac{1}{\sqrt{p}}
\sum_{y\in\Fp}\chi(g(y))\psi(xy).
$$

\begin{proposition}
  With notation as above, let $r$ be the number of distinct roots of
  $g$ in $\bFp$. Assume that $r\geq 2$ and $p>2r+1$. Assume
  furthermore that the only solutions of the equations
\begin{equation}\label{eq-root-energy}
x_1-x_2=x_3-x_4
\end{equation}
where $(x_1,\ldots,x_4)$ range over the roots of $g$ in $\bFp$ are
given by $x_3=x_1$, $x_4=x_2$ or $x_2=x_1$ and $x_3=x_4$. Then
$\sheaf{F}_{\chi,g}$ is a middle-extension sheaf of weight $0$, of
rank $r$, lisse on $\Gg_m$, and with geometric monodromy group
containing $\SL_r$. Furthermore, we have
$$
\Autz(\sheaf{F}_{g,\chi})\simeq \{a\in\bFp\,\mid\, g(a
X)=cg(X-\alpha)\text{ for some } c\in\Fp^{\times}, \ \alpha\in \Fp\},
$$
and
\begin{equation}\label{eq-dual-chi}
\Autt(\sheaf{F}_{\chi,g})=\emptyset
\end{equation}
if $r\geq 3$.
\end{proposition}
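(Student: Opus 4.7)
The plan is to verify each conclusion in turn, relying throughout on the Laumon--Katz formalism for Fourier transforms on $\Aa^1$. For the basic sheaf-theoretic properties, the hypothesis that no root of $g$ has order divisible by the order of $\chi$ ensures that $\sheaf{L}_{\chi(g)}$ is a Kummer middle-extension sheaf of weight $0$, tame everywhere, with drop exactly $1$ at each of the $r$ roots of $g$, and not geometrically isomorphic to any Artin--Schreier sheaf. Applying Laumon's stationary-phase formalism, $\sheaf{F}_{\chi,g}$ is again a middle extension of weight $0$; the rank formula
\[
\rank \ft_\psi(\sheaf{G}) = \sum_{s \in \Aa^1(\bFp)} \bigl(\drop_s(\sheaf{G}) + \Swan_s(\sheaf{G})\bigr) + \Swan_\infty(\sheaf{G})
\]
evaluates to $r$; and tameness at $\infty$ together with the local Fourier transform description yields lisseness on $\Gg_m$.

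For the monodromy statement, my plan is to combine geometric irreducibility (preserved by Fourier transform from the rank-$1$ irreducibility of $\sheaf{L}_{\chi(g)}$) with a fourth-moment estimate. Plancherel gives
\[
\sum_{x \in \Fp} \lvert K_{\chi,g}(x)\rvert^4 = \frac{1}{p} \sum_{\substack{y_1,\dots,y_4 \in \Fp \\ y_1 + y_3 = y_2 + y_4}} \chi(g(y_1))\bar\chi(g(y_2))\chi(g(y_3))\bar\chi(g(y_4)),
\]
and the two diagonal families $\{y_1=y_2, y_3=y_4\}$ and $\{y_1=y_4, y_3=y_2\}$ combine to give the leading term $2p$. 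The hypothesis on additive quadruples among the roots of $g$ bounds all remaining contributions, so the normalized fourth moment converges to $2$ as $p\to\infty$. The same hypothesis also precludes any Kummer induction $\sheaf{F}_{\chi,g} \simeq [d]_* \sheaf{G}$ with $d \geq 2$, making the monodromy representation primitive. By Larsen's alternative, a primitive irreducible subgroup of $\GL_r$ with fourth moment $2$ must have connected component containing one of $\SL_r, \SO_r, \Sp_r$; the latter two will be ruled out by the non-self-duality established in the last step.

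For the group $\Autz$, any $\gamma$ must stabilize the unique point of wild ramification at $\infty$, so $\gamma\cdot x = ax + b$ is affine. The Fourier-transform identities $[+b]^* \ft_\psi \sheaf{G} \simeq \ft_\psi(\sheaf{G} \otimes \sheaf{L}_{\psi(by)})$ and $[\times a]^* \ft_\psi \sheaf{G} \simeq \ft_\psi([\times a^{-1}]^* \sheaf{G})$ translate the condition $\gamma^*\sheaf{F}_{\chi,g} \simeq \sheaf{F}_{\chi,g} \otimes \sheaf{L}$, via Fourier inversion, into a geometric isomorphism of rank-$1$ sheaves on $\Aa^1$ of the form
\[
\sheaf{L}_{\chi(g(y/a))\psi(by)} \simeq \sheaf{L}_{\chi(g(y-\alpha))\psi(cy)}
\]
for some $\alpha$, $c$ (up to a Tate twist). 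Comparing Kummer parts at finite singularities gives the polynomial identity $g(aX) = c' g(X - \alpha)$, while matching the Artin--Schreier parts determines $b$ uniquely from $\alpha$. The converse construction is immediate, yielding the claimed parameterization of $\Autz(\sheaf{F}_{\chi,g})$.

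Finally, for $\Autt = \emptyset$ when $r \geq 3$, the existence of $\gamma \in \Autt$ would imply, via the Fourier-duality identity $\dual(\sheaf{F}_{\chi,g}) \simeq [\times(-1)]^* \sheaf{F}_{\bar\chi, g}$ (up to Tate twist) and the same affine reduction, that the monodromy representation of $\sheaf{F}_{\chi,g}$ is self-dual up to a character twist. But the preceding step shows $G^0 \supseteq \SL_r$, and for $r \geq 3$ the standard representation of $\SL_r$ is not self-dual up to any character (since $\SL_r$ has no non-trivial characters and $\std_r \not\simeq \dual(\std_r)$), giving the required contradiction. The main obstacle, I expect, lies in executing the fourth-moment and primitivity arguments precisely enough to conclude $\SL_r$ rather than a general classical-group candidate, and in tracking all the rank-$1$ twists that appear in the Fourier-inversion computations for $\Autz$ and $\Autt$ without circularity.
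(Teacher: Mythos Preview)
Your argument for $\Autt(\sheaf{F}_{\chi,g})=\emptyset$ fails, and this also breaks your monodromy step. You claim that $G^0\supseteq\SL_r$ (for $r\geq 3$) rules out any $\gamma$ with $\gamma^*\sheaf{F}\simeq\dual(\sheaf{F})\otimes\sheaf{L}$, since $\std\not\simeq\dual(\std)$ for $\SL_r$. But pullback by $\gamma$ precomposes the monodromy representation with an automorphism of the geometric $\pi_1$, and this can induce the \emph{outer} automorphism of $\SL_r$, which precisely exchanges $\std$ and $\dual(\std)$. Concretely, for odd $r\geq 3$ the Kloosterman sheaf $\HYPK_r$ has $G=\SL_r$ and yet $\Autt(\HYPK_r)=\{x\mapsto -x\}\neq\emptyset$, so your implication is false in general. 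Since you also invoke non-self-duality (i.e.\ $\Autt=\emptyset$) to exclude $\SO_r$ and $\Sp_r$ in the fourth-moment step, the two arguments are circular, exactly the danger you flag at the end. The paper's proof of $\Autt=\emptyset$ never touches the monodromy group: after reducing (by comparing tame/wild ramification and drops) to diagonal $\gamma$ and to a twist of the form $\sheaf{L}_{\psi(\alpha X)}$, the decomposition of $\sheaf{F}_{\chi,g}$ on wild inertia at $\infty$ as $\bigoplus_x\sheaf{L}_{\psi(xX)}$ (sum over roots $x$ of $g$) forces the root set of $g$ to be stable under $x\mapsto\alpha-x$. For $r\geq 3$, choose a root $x$ and another root $y\notin\{x,\alpha-x\}$; then $x-(\alpha-y)=y-(\alpha-x)$ is a nontrivial solution of~\eqref{eq-root-energy}, contradicting the hypothesis. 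Note that your $\Autt$ step makes no use of~\eqref{eq-root-energy} at all; this is where it is needed.

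Your $\Autz$ computation also has a gap. You argue only that $\gamma$ fixes $\infty$, hence is affine, whereas the paper shows that $\gamma$ must fix $0$ as well (hence is diagonal): the sheaf has drop $1$ at $0$ by pseudoreflection, while $\sheaf{F}_{\chi,g}\otimes\sheaf{L}$ would have drop $r\geq 2$ at any finite singularity of $\sheaf{L}$ inside $\Gg_m$. More seriously, you cannot simply ``Fourier-invert'' the relation $\gamma^*\sheaf{F}_{\chi,g}\simeq\sheaf{F}_{\chi,g}\otimes\sheaf{L}$ into an isomorphism of rank-$1$ sheaves: the inverse transform of the right-hand side is the additive convolution of $\sheaf{L}_{\chi(g)}$ with $\ft_{\psi}^{-1}(\sheaf{L})$, which has no reason to be of rank $1$ for an arbitrary rank-$1$ sheaf $\sheaf{L}$. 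The paper first pins down $\sheaf{L}$ by local analysis (unramified at $0$ via the pseudoreflection description, and of Artin--Schreier type $\sheaf{L}_{\psi(\alpha X)}$ on wild inertia at $\infty$ via the decomposition $\bigoplus_x\sheaf{L}_{\psi(xX)}$), and only then applies the inverse Fourier transform to obtain the polynomial identity $g(X/a)=c\,g(X-\alpha)$.
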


Note that if $r=2$, the sheaf is of $\Sp_2$-type (since $\Sp_2=\SL_2$)
to that $\Autt(\sheaf{F}_{\chi,g})$ is not relevant in that case. 

\begin{proof}
  The sheaf $\sheaf{L}_{\chi(g)}$ is an irreducible tame
  pseudoreflection sheaf in the sense
  of~\cite[7.9.1--7.9.3]{katz-esde}, ramified at the zeros of $g$
  (because of our assumption on their order) hence the fact that
  $\sheaf{F}_{\chi,g}$ is lisse on $\Gg_m$ and of rank $r$ follows
  from~\cite[Th. 7.9.4]{katz-esde}.  It is a middle extension,
  pointwise of weight $0$, by the general theory of the Fourier
  transform. Moreover, by~\cite[Th. 7.9.6]{katz-esde}, the geometric
  monodromy group contains $\SL_r$ because of the assumptions on the
  roots.
\par
We next compute the projective automorphism group. We first note that
because $r\geq 2$, there is at least one non-zero root, and hence
$\sheaf{F}_{\chi,g}$ is wildly ramified at $\infty$ by~\cite[Th. 7.9.4
(2)]{katz-esde}. On the other hand, it is ramified, but tame, at $0$
by~\cite[7.4.5 (2)]{katz-esde}.
\par
Now assume $\gamma\in \Autz(\sheaf{F}_{\chi,g})$, and that
$\sheaf{L}$ is a rank $1$ sheaf such that
$$
\gamma^*\sheaf{F}_{\chi,g}\simeq \sheaf{F}_{\chi,g}\otimes\sheaf{L}.
$$
\par
We first claim that $\gamma$ is diagonal or anti-diagonal. Indeed, if
$\gamma^{-1}(0)\notin\{0,\infty\}$, the sheaf $\sheaf{L}$ must be
ramified at $\gamma^{-1}(0)$ for the tensor product to to be ramified
there, as $\gamma^*\sheaf{F}_{\chi,g}$ is. But then the
pseudoreflection monodromy means that the inertia invariants have
codimension $1$ on the left, and $r$ on the right (since the stalk of
$\sheaf{L}$ at $\gamma^{-1}(0)$ must vanish). Since $r\geq 2$, this is
not possible. Similarly, $\gamma^{-1}(\infty)\in\{0,\infty\}$, proving
the claim.
\par
Next, we can see that in fact $\gamma$ must be diagonal. Indeed,
otherwise $\sheaf{F}_{\chi,g}\otimes\sheaf{L}$ would be tame at
$\infty$, but this is not possible. Indeed, as a representation of the
wild inertia group at $\infty$, this tensor product is isomorphic to
the representation
$$
\bigoplus_{x}\sheaf{L}_{\psi(xX)}\otimes\sheaf{L}
$$
where the sum ranges over zeros of $g$ in $\bFp$, by~\cite[Th. 7.9.4
(2)]{katz-esde}. There are at least two summands since $r\geq 2$, and
if one is tame, say that of $x$, then for any other zero $x'\not=x$,
we have
$$
\sheaf{L}_{\psi(x'X)}\otimes\sheaf{L}\simeq
\Bigl(\sheaf{L}_{\psi(xX)}\otimes\sheaf{L}\Bigr)
\otimes \sheaf{L}_{\psi((x'-x)X)}
$$
which is \emph{not} tame as tensor product of a tamely ramified and a
wildly ramified character. Thus the direct sum contains at least one
wildly ramified summand.
\par
We are thus left with the case where $\gamma\cdot x=ax$ for some
$a\in\bFp^{\times}$. Now, assume we have
$$
[\times a]^*\sheaf{F}_{\chi,g}\simeq \sheaf{F}_{\chi,g}\otimes\sheaf{L}.
$$
\par
If $\sheaf{L}$ were ramified at some $x\in\Gg_m(\bFp)$, the right-hand
side would also be (since $\sheaf{F}_{\chi,g}$ is lisse on $\Gg_m$),
but the left-hand side is not. Hence $\sheaf{L}$ is lisse on $\Gg_m$.
\par
Furthermore, $\sheaf{L}$ is at most tamely ramified at $0$, since
$[\times a]^*\sheaf{F}_{\chi,g}$ is.  Let $\Lambda$ be the tame
character of the inertia group at $0$ which corresponds to
$\sheaf{L}$.  By~\cite[Cor. 7.4.6(1)]{katz-esde}, the sheaves
$\sheaf{F}_{\chi,g}$ and $[\times a]^*\sheaf{F}_{\chi,g}$ both have
pseudoreflection monodromy at $0$ with inertia group at $0$ acting on
the inertial invariants by the character
$\sheaf{L}_{\overline{\chi}(X)}$. Thus our assumed geometric
isomorphisms leads to
$$
\sheaf{L}_{\overline{\chi}(X)}\simeq \sheaf{L}_{\overline{\chi}(X)}
\otimes\sheaf{L}_{\Lambda},
$$
and therefore to $\Lambda=1$. Hence $\sheaf{L}$ is unramified at $0$.
\par
Looking again at infinity, we find an isomorphism 
$$
\bigoplus_{x}\sheaf{L}_{\psi(axX)}\simeq
\bigoplus_{x}\sheaf{L}_{\psi(xX)}\otimes\sheaf{L}
$$
of representations of the wild inertia group. Picking one root $x_i$,
we deduce that $\sheaf{L}$ is isomorphic to $\sheaf{L}_{\psi(\alpha
  X)}$ for some $\alpha$, as a representation of the wild inertia
group at infinity. Hence we have a geometric isomorphism
$$
\sheaf{L}\simeq \sheaf{L}_{\psi(\alpha X)}
$$
since $\sheaf{L}\otimes\sheaf{L}_{\psi(-\alpha X)}$ is of rank $1$,
lisse on $\Aa^1$ and tame on $\Pp^1$, hence geometrically trivial.
\par
Finally, using the inverse Fourier transform, we see that
$$
[\times a]^*\sheaf{F}_{\chi,g}\simeq
\sheaf{F}_{\chi,g}\otimes\sheaf{L}_{\psi(\alpha X)}
$$
is equivalent to
$$
\sheaf{L}_{\chi(g(X/a))}\simeq \sheaf{L}_{\chi(g(X-\alpha))},
$$
which is equivalent (by comparing degrees and using the classification
of Kummer sheaves) to 
$$
g(X/a)=cg(X-\alpha)
$$
for some constants $c\in\Fp^{\times}$ and $\alpha\in\Fp$. This gives
the stated result concerning $\Autz(\sheaf{F}_{\chi,g})$.
\par
For the last statement, assume that $r\geq 3$ and that $\gamma\in
\Autt(\sheaf{F}_{\chi,g})$, i.e., that we have
$$
\gamma^*\sheaf{F}_{\chi,g}\simeq
\dual(\sheaf{F}_{\chi,g})\otimes\sheaf{L}
$$
for some rank $1$ sheaf $\sheaf{L}$. Exactly as before, we see first
that $\gamma$ is diagonal or anti-diagonal, and then that it is
diagonal, by considering ramification. Then we see that $\sheaf{L}$ is
tame at $0$, and in fact the tame character by which it acts at $0$ is
$\bar{\chi}^2$.
\par
Next, as representations of the wild inertia group at $\infty$, we
obtain
$$
\bigoplus_x \sheaf{L}_{\psi(xX)}\simeq \bigoplus_x
\sheaf{L}_{\psi(-xX)}\otimes\sheaf{L}.
$$
\par
We deduce that $\sheaf{L}$ must be of the form $\sheaf{L}_{\psi(\alpha
  X)}$ for some $\alpha$, as a representation of the wild inertia
group at infinity.  This means that if $x$ is a root of $g$, then so
is $\alpha-x$. But since there are at least three distinct roots of
$g$, we can fix some root $x$ of $g$ and find another root $y\notin
\{x,\alpha-x\}$.  Then the equation
$$
x-(\alpha-y)=y-(\alpha-x)
$$
contradicts our assumption on the roots of~(\ref{eq-root-energy}).
\end{proof}


\section{Sums of products with fractional linear transformations}\label{sec-proofs}

We can now quickly prove the results stated in Section~\ref{sec-intro}
using the framework established previously.



\begin{proof}[Proof of Theorem~\ref{th-main1}]
  First, we denote by $U$ the common open set in $\Aa^1$ where all
  $\gamma\in\uple{\gamma}^*$ are defined.
\par
We begin with the easier $\Sp$-type case.  Let $\uple{\gamma}^*$ be
the tuple of distinct elements of $\uple{\gamma}$, and $n_{\gamma}$
the multiplicity of any such element in $\uple{\gamma}$. Let $U$ be
the common open set in $\Aa^1$ where all $\gamma\in\uple{\gamma}^*$
are defined. Arguing as in Example~\ref{ex-old} (2), we see that the
tuple
$\uple{\sheaf{F}}=(\gamma^*\sheaf{F})_{\gamma\in\uple{\gamma}^*}$ is
strictly $U$-generous, simply because $\sheaf{F}$ is bountiful of
$\Sp_r$ type.
\par
By the birational invariance of $H^2_c$, we have
$$
H^2_c(\Aa^1\times\bFp, \bigotimes_{1\leq i\leq k}
\gamma_i^*\sheaf{F}\otimes \sheaf{L}_{\psi(hX)})= H^2_c(U\times\bFp,
\bigotimes_{1\leq i\leq k} \gamma_i^*\sheaf{F}\otimes
\sheaf{L}_{\psi(hX)}).
$$
\par
Thus, by Theorems~\ref{th-diag} and~\ref{th-diag-2}, we see that if
$$
H^2_c(\Aa^1\times\bFp, \bigotimes_{1\leq i\leq k}
\gamma_i^*\sheaf{F}\otimes \sheaf{L}_{\psi(hX)})\not=0,
$$
there must exist some geometric isomorphism
$$
 \sheaf{L}_{\psi(hX)}\simeq
\bigotimes_{\gamma\in\uple{\gamma}^*}\Lambda_{\gamma}(\gamma^*\sheaf{F})
$$
where $\Lambda_{\gamma}$ are irreducible representations of the
geometric monodromy group $G=\Sp_r$ of $\sheaf{F}$ such that
$\Lambda_{\gamma}$ is a subrepresentation of $\std^{\otimes
  n_{\gamma}}$. Just for dimension reasons, each $\Lambda_{\gamma}$
must be a one-dimensional character. But
Definition~\ref{def-gen-sheaf} implies in particular that $G$ has no
non-trivial character, so that $\Lambda_{\gamma}=1$, which implies
that $\sheaf{L}_{\psi(hX)}$ must be geometrically trivial, i.e., that
$h=0$.
\par
This already proves the first part of Theorem~\ref{th-main1} when
$h\not=0$. Now assume $h=0$. Then the condition that the trivial
representation be a subrepresentation of $\std^{\otimes n_{\gamma}}$
holds if and only if $n_{\gamma}$ is even, and thus the $H^2_c$ space
does \emph{not} vanish if and only if all multiplicities $n_{\gamma}$
are even, which means if and only if $\uple{\gamma}$ is \emph{not}
normal.
\par
We now come to the $\SL_r$-type case. If $\sheaf{F}$ has a special
involution $\xi$, let $\sheaf{L}$ be a rank $1$ sheaf such that
\begin{equation}\label{eq-xi-special}
\xi^*\sheaf{F}\simeq \dual(\sheaf{F})\otimes\sheaf{L},
\end{equation}
and we note that (as a character of the fundamental group of
$U\times\bFp$) the sheaf $\sheaf{L}$ has order dividing $r$ (by taking
the determinant on both sides).
\par
For convenience, we let $\xi=1$ and $\sheaf{L}=\bQl$, if there is no
special involution.
\par
Let $\uple{\gamma}^*$ be a tuple of representatives of the elements of
$\uple{\gamma}$ for the equivalence relation
$$
\gamma_i\sim \gamma_j\text{ if and only if } (\gamma_i=\gamma_j\text{
  or } \gamma_i=\xi\gamma_j)
$$
(which is indeed an equivalence relation because $\xi^2=1$).
\par
Then, arguing as in Example~\ref{ex-old} (3), we see that the tuple
$\uple{\sheaf{F}}=(\gamma^*\sheaf{F})_{\gamma\in\uple{\gamma}^*}$ is
strictly $U$-generous, because $\sheaf{F}$ is bountiful of
$\SL_r$-type and because
$$
\gamma_i^*\sheaf{F}\simeq \dual(\gamma_j^*\sheaf{F})\otimes\sheaf{L}',
$$
for some rank $1$ sheaf $\sheaf{L}'$, implies that
$$
\gamma_i\gamma_j^{-1}\in\Autt(\sheaf{F}),
$$
and thus either does not occur (if $\sheaf{F}$ has no special
involution) or happens only if $\gamma_i=\xi\gamma_j$, so that
$\gamma_i\sim\gamma_j$, which is excluded for distinct components of
$\uple{\gamma}^*$.
\par
For $\gamma\in\uple{\gamma}^*$, we denote
\begin{align*}
n^{1}_{\gamma}&=
|\{i\,\mid\, \gamma_i=\gamma\text{ and }
\sigma_i=1\}|+
|\{i\,\mid\, \gamma_i=\xi\gamma\text{ and }
\sigma_i=c\}|,
\\
n^{c}_{\gamma}&=
|\{i\,\mid\, \gamma_i=\gamma\text{ and }
\sigma_i=c\}|+
|\{i\,\mid\, \gamma_i=\xi\gamma\text{ and }
\sigma_i=1\}|,
\end{align*}
so that, by bringing together equivalent $\gamma_i$'s, we obtain a
geometric isomorphism
\begin{equation}\label{eq-reduce}
\bigotimes_{1\leq i\leq k}\gamma_i^*(\sheaf{F}^{\sigma_i}) \simeq
\bigotimes_{\gamma\in\uple{\gamma}^*} (\gamma^*\sheaf{F})^{\otimes
  n_{\gamma}^1} \otimes \dual(\gamma^*\sheaf{F})^{\otimes
  n_{\gamma}^c}\otimes\sheaf{L}_0
\end{equation}
for some rank $1$ sheaf $\sheaf{L}_0$, which is a tensor product of
sheaves of the form $\gamma^*\sheaf{L}$ or
$\gamma^*(\dual{\sheaf{L}})$. In particular, $\sheaf{L}_0$ has order
dividing $r$ since $\sheaf{L}$ does.
\par
We now get from Theorem~\ref{th-diag-2} that if
\begin{multline*}
  H^2_c(\Aa^1\times\bFp, \bigotimes_{1\leq i\leq k}
  \gamma_i^*(\sheaf{F}^{\sigma})\otimes \sheaf{L}_{\psi(hX)})\\=
  H^2_c(\Aa^1\times\bFp,\bigotimes_{\gamma\in\uple{\gamma}^*}
  (\gamma^*\sheaf{F})^{\otimes n_{\gamma}^1} \otimes
  \dual(\gamma^*\sheaf{F})^{\otimes n_{\gamma}^c}\otimes
  (\sheaf{L}_0\otimes \sheaf{L}_{\psi(hX)})) \not=0,
\end{multline*}
then 
$$
\sheaf{L}_0\otimes \sheaf{L}_{\psi(hX)}\simeq
\bigotimes_{\gamma\in\uple{\gamma}^*}\Lambda_{\gamma}(\gamma^*\sheaf{F})
$$
where $\Lambda_{\gamma}$ is an irreducible representation of $\SL_r$
which is a subrepresentation of the tensor product $\std^{\otimes
  n_{\gamma}^1}\otimes\dual(\std)^{\otimes n^{c}_{\gamma}}$. Since
$\SL_r$ has no $1$-dimensional characters, this shows that this
condition cannot occur unless $\Lambda_{\gamma}$ is trivial for all
$\gamma$, which implies then that
\begin{equation}\label{eq-l-psi}
  \sheaf{L}_0\otimes\sheaf{L}_{\psi(hX)}\simeq \bQl
\end{equation}
is trivial.
\par
If $\sheaf{F}$ has no special involution, this immediately implies
that $h=0$.  If $\sheaf{F}$ has a special involution, on the other
hand, we recall that $\sheaf{L}_0$ has order $r$, while
$\sheaf{L}_{\psi(hX)}$ has order $p$ if
$h\not=0$. Hence~(\ref{eq-l-psi}) is impossible if $p>r$ and
$h\not=0$, and moreover, in that case we also get
from~(\ref{eq-l-psi}) that $\sheaf{L}_0$ must be trivial.
\par
Thus, in all cases of Theorem~\ref{th-main1}, we reduce to
understanding the case $h=0$. Since $\Lambda_{\gamma}$ is trivial, we
have also the condition that the trivial representation is a
subrepresentation of the tensor product
$$
(\gamma^*\sheaf{F})^{\otimes n_{\gamma}^1} \otimes
\dual(\gamma^*\sheaf{F})^{\otimes n_{\gamma}^c}
$$
for all $\gamma$ in $\uple{\gamma}^*$.
\par
But the trivial representation of $\SL_r$ is a subrepresentation of
$\std^{\otimes n}\otimes\dual(\std)^{\otimes m}$ if and only if $r\mid
n-m$ (see, e.g.,~\cite[Proof of Prop. 4.4]{kr}), and this means that
$H^2_c$ non-zero implies that $r\mid n_{\gamma}^1-n_{\gamma}^c$ for
all $\gamma\in\uple{\gamma}^*$, which means precisely that
$(\uple{\gamma},\uple{\sigma})$ is not $r$-normal (if there is no
special involution) or not $r$-normal with respect to $\xi$ (if there
is one).
\end{proof}

\begin{remark}
We see from the proof that the condition $p>r$ in
Theorem~\ref{th-main1} (when $\sheaf{F}$ has a special involution) can
be relaxed: especially, it is not needed if we have
$$
\xi^*\sheaf{F}\simeq \dual(\sheaf{F})
$$
(i.e. if $\sheaf{L}$ in~(\ref{eq-xi-special}) can be taken to be the
trivial sheaf, since we only used $p>r$ to deduce that $\sheaf{L}_0$
in~(\ref{eq-reduce}) is trivial, which is automatically true in this
case).
\end{remark}

For completeness, we explain the proof of Proposition~\ref{pr-rh}:

\begin{proof}[Proof of Proposition~\ref{pr-rh}]
Let $U\subset \Aa^1$ be the maximal open set where all sheaves
$\sheaf{F}_i$ and $\sheaf{G}$ are lisse. We have
$$
|(\Aa^1-U)(\Fp)|\leq \sum_i\cond(\sheaf{F}_i)+\cond(\sheaf{G}).
$$
\par
Since the sheaves are all mixed of weights $\leq 0$, we have
$$
\Bigl|\sum_{x\in U(\Fp)}
K_1(x)\cdots K_k(x)\overline{M(x)}
-
\sum_{x\in\Fp}
K_1(x)\cdots K_k(x)\overline{M(x)}\Bigr|\leq C_1|(\Aa^1-U)(\Fp)|
$$
where $C_1$ is the product of the ranks of the sheaves.  This means
that it is enough to deal with the sum over $x\in U(\Fp)$.
\par
By the Grothendieck--Lefschetz trace formula we have
$$
\sum_{x\in U(\Fp)} K_1(x)\cdots K_k(x)\overline{M(x)}= -\Tr(\frob\mid
H^1_c(U\times\bFp, \bigotimes_{i}\sheaf{F}_i\otimes\dual(\sheaf{G})))
$$
since the $H^0_c$ and $H^2_c$ terms vanish, by assumption for $H^2_c$
and because we have a tensor product of middle-extension sheaves for
$H^0_c$.
\par
By Deligne's proof of the Riemann Hypothesis~\cite{weilii}, since the
tensor product is of weight $0$, all eigenvalues of Frobenius acting
on the cohomology space have modulus $\leq \sqrt{p}$, and hence
$$
\Bigl|\sum_{x\in U(\Fp)}
K_1(x)\cdots K_k(x)\overline{M(x)}
\Bigr|\leq \dim H^1_c(U\times\bFp,
\bigotimes_{i}\sheaf{F}_i\otimes\dual(\sheaf{G}))
\times \sqrt{p}.
$$
\par
Finally, using the Euler-Poincaré formula, one sees that the dimension
of this space is bounded in terms of the conductors of $\sheaf{F}_i$
and of $\sheaf{G}$, and in terms of $k$.
\end{proof}

As already mentioned, Corollary~\ref{cor-concrete} is an immediate
consequence of Theorem~\ref{th-main1} and Proposition~\ref{pr-rh}.
Corollary~\ref{cor-concrete2} is similar, except that in the argument
of Proposition~\ref{pr-rh}, there is a main term in the trace formula
which is (for the $\Sp$-type case) given by
$$
\Tr(\frob\mid H^2_c(\bigotimes
\gamma_i^*\sheaf{F}\otimes\dual(\sheaf{G}))
).
$$
\par
However, the extra assumption that the geometric monodromy group
coincides with the arithmetic monodromy group means that all
eigenvalues of the Frobenius acting on $H^2_c$ are equal to $p$. Hence
this contribution is equal to
$$
p\dim H^2_c(\bigotimes \gamma_i^*\sheaf{F}\otimes\dual(\sheaf{G})),
$$
and for $\sheaf{G}$ given (as in the proof of Theorem~\ref{th-main1})
by
$$
\sheaf{G}=\bigotimes_{\gamma\in\uple{\gamma}^*}
\Lambda_{\gamma}(\gamma^*\sheaf{F})
$$
with $\Lambda_{\gamma}$ an irreducible representation of $G$ which is
a subrepresentation of $\std^{\otimes n_{\gamma}}$ (as it must be to
have non-zero $H^2_c$), we have
$$
\dim H^2_c(\bigotimes \gamma_i^*\sheaf{F}\otimes\dual(\sheaf{G}))=
\prod_{\gamma\in\uple{\gamma}^*}
\mathrm{mult}_{\Lambda_{\gamma}}(\std^{\otimes n_{\gamma}})
$$
where each multiplicity is at most $k$, and is equal to $1$ if
$n_{\gamma}=1$. The result follows immediately. The case of
$\SL_r$-type is similar and left to the reader; the extra condition
that $\xi^*\sheaf{F}\simeq \dual(\sheaf{F})$ (without a twist by a
non-trivial rank $1$ sheaf) allows us to deduce~(\ref{eq-reduce}) with
$\sheaf{L}_0$ trivial, from which the non-vanishing of $H^2_c$ follows
when $(\uple{\gamma},\uple{\sigma})$ is not $r$-normal with respect to
the special involution. (We already observed that under this condition
we do not need to assume $p>r$ in Theorem~\ref{th-main1}).

\section{Applications}\label{sec-applications}

We present here some applications of the general case developed in
Section~\ref{sec-general}, going beyond the results of the
introduction and of the previous section. The first recovers an
estimate of Katz used by Fouvry and Iwaniec in their study of the
divisor function in arithmetic progressions~\cite{fouvry-iwaniec-div},
the second discusses briefly the sums of Bombieri and
Bourgain~\cite{bombieri-bourgain}, while the last only is a new
result, which is related to the context of~\cite{FGKM, kr}. We also
recall the occurence of this type of situations in the work of Fouvry,
Michel, Rivat and S\'ark\"ozy~\cite[Lemma 2.1]{FMRS}, although we will
not review it.

\subsection{The Fouvry-Iwaniec sum} In~\cite{fouvry-iwaniec-div}, for
primes $p$ and $(\alpha,\beta)\in\Fpt^2$, the exponential sum
$$
S(\alpha,\beta;p)= \sums_{t}
\hypk_2(\alpha(t-1)^2)\hypk_2((t-1)(\alpha t-\beta))
\hypk_2(\beta(t^{-1}-1)^2)\hypk_2((t^{-1}-1)(\beta t^{-1}-\alpha))
$$
arises, where the sum is over $t\in\Fpt-\{1,\beta/\alpha\}$, and we
abbreviate $\hypk_2(x)=\hypk_2(x;p)$. This is not of the type of
Section~\ref{sec-intro}, since the arguments of the Kloosterman sums
are not simply of the form $\gamma_i\cdot t$. However, it fits the
general framework of Section~\ref{sec-general} with the $4$-tuple
$$
\uple{\sheaf{F}}=(f_i^*\HYPK_2)_{1\leq i\leq 4},
$$
where
\begin{gather*}
f_1=\alpha(X-1)^2,\quad\quad f_2=(X-1)(\alpha X-\beta)\\
f_3=\beta (X^{-1}-1)^2,\quad\quad
f_4=(X^{-1}-1)(\beta X^{-1}-1).
\end{gather*}
\par
Let $U=\Gg_m-\{1,\beta/\alpha\}$. We claim that this $4$-tuple is
$U$-generous if $\alpha\not=\beta$ (which is certainly a necessary
condition, since otherwise $f_1=f_2$). Indeed, since the geometric
monodromy group of each $f_i^*\sheaf{F}$ is $\SL_2=\Sp_2$ (because the
geometric monodromy group of $\HYPK_2$ is $\SL_2$, and $\SL_2$ has no
finite index algebraic subgroup), we need to check that there is no
geometric isomorphism
$$
f_i^*\HYPK_2\simeq f_j^*\HYPK_2\otimes\sheaf{L}
$$
for $i\not=j$ and a rank $1$ sheaf $\sheaf{L}$. But taking the dual
and then tensoring, such an isomorphism implies
$$
f_i^*\End(\HYPK_2)\simeq f_j^*\End(\HYPK_2),
$$
on the open set $V=f_i^{-1}(\Gg_m)$ where the left-hand side of the
original isomorphism (hence also the right-hand side) is lisse. Since
$\End(\HYPK_2)\simeq \bQl\oplus \symk^2(\HYPK_2)$, this implies that
$$
f_i^*\symk^2(\HYPK_2)\simeq f_j^*\symk^2(\HYPK_2),
$$
on $V$.
\par
But since $\symk^2(\HYPK_2)$ is ramified at $0$ and $\infty$, the
ramification loci $S_i$ of the sheaves $f_i^*\symk^2(\HYPK_2)$ are,
respectively
\begin{gather*}
  S_1=\{1,\infty\},\quad\quad S_2=\{1,\beta/\alpha,\infty\},\\
  S_3=\{0,1\},\quad\quad S_4=\{0,1,\beta\},
\end{gather*}
and are therefore distinct, proving the desired property of
$U$-generosity.
\par
Since the sum $S(\alpha,\beta;p)$ concerns the tensor product of
$$
f_1^*\HYPK_2\otimes f_2^*\HYPK_2\otimes f_3^*\HYPK_2\otimes
f_4^*\HYPK_2
$$
with the trivial sheaf, which is a tensor product of the trivial
representations, which is not a subrepresentation of $\std$, it
follows therefore that
\begin{multline*}
H^2_c(\Aa^1\times\bFp,f_1^*\HYPK_2\otimes f_2^*\HYPK_2\otimes
f_3^*\HYPK_2\otimes f_4^*\HYPK_2)=\\
H^2_c(U\times\bFp,f_1^*\HYPK_2\otimes f_2^*\HYPK_2\otimes
f_3^*\HYPK_2\otimes f_4^*\HYPK_2)=0,
\end{multline*}
and hence by Proposition~\ref{pr-rh} that
$$
S(\alpha,\beta;p)\ll p^{1/2}
$$
for all primes $p$ and $\alpha\not=\beta$ in $\Fpt$, where the implied
constant is absolute. In the Appendix to~\cite{fouvry-iwaniec-div},
Katz gives a precise estimate of the implied constant.

\subsection{The Bombieri-Bourgain sums}
The Bombieri-Bourgain sums are defined by
$$
S=\sum_{x\in\Fp}\prod_{1\leq i \leq k}K_i(x+a_i)M(x)
$$
(see~\cite[p. 513]{katz-bb})
where
\begin{align*}
  M(x)&=e\Bigl(\frac{bx+G(x)}{p}\Bigr)\chi(g(x)),\\
  K_i(x)&=-\frac{1}{\sqrt{p}}\sum_{y\in\Fp}
  \chi_i(f_i(y))e\Bigl(\frac{g_i(y)}{p}\Bigr)
  e\Bigl(\frac{xy}{p}\Bigr)
\end{align*}
for some $b\in\Fp$ and $(a_1,\ldots,a_k)\in\Fp^k$, where
\begin{itemize}
\item 
$(\chi,\chi_1,\ldots,\chi_k)$ are non-trivial multiplicative
characters modulo $p$,
\item  $f_i\in \Fp[X]$, $g\in\Fp[X]$ are non-zero polynomials,
\item $g_i\in\Fp[X]$ and $G\in\Fp[X]$ may be zero.
\end{itemize}
\par
This sum is of the type considered in Section~\ref{sec-general}, with
\begin{align*}
\sheaf{F}_i&=[+a_i]^*
\ft_{\psi}(\sheaf{L}_{\psi(g_i)}\otimes\sheaf{L}_{\chi(f_i)}),\\
\sheaf{G}&=\sheaf{L}_{\psi(G+bX)}\otimes\sheaf{L}_{\chi(g)}
\end{align*}
(or rather those $\sheaf{F}_i$ corresponding to the distinct
parameters since this is not assumed to be the case).
\par
Under (different) suitable conditions on these parameters, Bombieri
and Bourgain~\cite[Lemma 33]{bombieri-bourgain} and
Katz~\cite[Th. 1.1]{katz-bb} give estimates for $S$ of the type
$$
S\ll p^{1/2}
$$
where the implied constant depends only on $k$ and the degrees of the
polynomials involved.  Both proofs avoid involving monodromy groups:
Katz uses the ramification property of Fourier transforms to determine
that the relevant tensor product has zero invariants under some
inertia group, while Bombieri and Bourgain use the Riemann Hypothesis
together with some analytic steps, such as mean-square averaging and
Galois invariance of the weights (this illustrates that sometimes an
estimate for a sum of products might be easier to obtain than those
involved in the previous sections).
\par
We show how to recover quickly the desired square-root cancellation in
the case that occurs for the application considered by Bombieri and
Bourgain, by a hybrid of Katz's argument and those of the previous
sections. 
\par
In~\cite{bombieri-bourgain}, the conditions are: $p$ is odd,
$g_i=G=0$, $1\leq \deg(f_i)\leq 2$, $\deg(g)\geq 2$, the $f_i$ and $g$
have only simple roots, and all $\chi_i$ and $\chi$ are equal and are
of order $2$. We then first note that if some $f_i$ has degree $1$,
the resulting Fourier transform
$$
\ft_{\psi}(\sheaf{L}_{{\chi(f_i)}})
$$
is geometrically isomorphic to a tensor product
$$
\sheaf{L}_{\psi(\alpha X)}\otimes\sheaf{L}_{\chi(X)}
$$
(we use here that $\chi=\bar{\chi}$), so that by combining these with
$\sheaf{G}$ we may assume that all $f_i$ are of degree $2$. Note that
$g$ is replaced by $X^kg$, where $k$ is the number of $i$ with
$\deg(f_i)=1$. Since $\chi$ has order $2$, we have either $k$ even and
$$
\sheaf{L}_{\chi(X^kg)}\simeq \sheaf{L}_{\chi(g)},
$$
so that the previous assumptions on $g$ remain valid, or $k$ odd and
$$
\sheaf{L}_{\chi(X^kg)}\simeq \sheaf{L}_{X\chi(g)}\simeq \sheaf{L}_{\chi(\tilde{g})},
$$
where $\tilde{g}=g/X$ if $g(0)=0$, or $\tilde{g}=Xg$ otherwise; in the
first case it may be that $\deg(\tilde{g})=1$, but in that case the
unique zero of $\tilde{g}$ is in $\Gg_m$ since $g$ has simple
roots. In particular, in all cases, we see that $g$ is replaced by a
polynomial with at least one (simple) root in $\Gg_m$.
\par
If all $f_i$ were of degree $1$, we are left with 
$$
\sum_{x}\chi(g(x))\psi(hx),
$$
with $g$ non-constant, which satisfies the desired conditions. We
therefore assume that some $f_i$ are of degree $2$. 
\par
For a polynomial $f_i$ of degree $2$, by completing squares, we see
that the Fourier transform
$$
\ft_{\psi}(\sheaf{L}_{\chi(f_i)})
$$
is geometrically isomorphic to a tensor product of
$\sheaf{L}_{\psi(hX)}$ for some $h$ and of the Fourier transform
corresponding to a polynomial of the form $X^2+c_i$.  We may therefore
assume that all $f_i$ are of this form.
\par
Finally, it is easy to see that
$$
\ft_{\psi}(\sheaf{L}_{\chi(X^2+c_i)})\simeq [x\mapsto
c_ix^2/4]^*\HYPK_2.
$$
\par
In particular, such sheaves are of rank $2$, lisse on $\Gg_m$ and have
geometric monodromy group $G_i=G_i^0=\SL_2$. We therefore obtain a
strictly $\Gg_m$-generous tuple by taking for $\sheaf{F}_i$ the
Fourier transforms corresponding to the $c_i$'s, modulo the
equivalence relation
$c_i\sim c_j$ if and only if 
$$
c_ic_j^{-1}\in \Autz([x\mapsto x^2/4]^*\HYPK_2).
$$
\par
We can now conclude: since $g$ has a simple zero in $\Gg_m$, the sheaf
$\sheaf{G}$ is ramified at at least one point inside $\Gg_m$, and
therefore the irreducible sheaf $\sheaf{G}$ can not be a subsheaf of
the tensor product
$$
\bigotimes_{i} \sheaf{F}_i^{\otimes n_i}
$$
which is lisse on $\Gg_m$. 

\begin{remark}
  Even if $\deg(g)=1$, $g=\alpha X$ and $\alpha\not=0$, we can obtain
  the square-root bounds provided we have at least one sheaf
  $\sheaf{F}_i$: by the results of Section~\ref{sec-general}, the
  condition
$$
H^2_c(\Gg_m\times\bFp,\bigotimes_{i} \sheaf{F}_i^{\otimes n_i}
\otimes\sheaf{G})\not=0
$$
would imply that $\dual(\sheaf{G})$ is geometrically isomorphic to
$$
\bigotimes_i \mathrm{Sym}^{m_i}(\sheaf{F}_i)
$$
for some $m_i\geq 0$. By rank considerations, we have $m_i=0$, and
this implies that $\sheaf{G}$ is geometrically trivial, which is
impossible since $g$ is non-constant.
\end{remark}


\subsection{Central limit theorem for $\GL_N$ cusp forms}

The last example is a generalization of the central limit theorems
of~\cite{FGKM} and~\cite{kr} to residue classes in restricted
subsets. Let $N\geq 2$ be an integer. Fix a smooth function $w\geq 0$
on $[0,+\infty[$, compactly supported on $[1,2]$ and non-zero. For a
cusp form $f$ on $\GL_N$ over $\Qq$, with level $1$, for a prime $p$
and a residue class $a\in\Fpt$, and $X\geq 2$, we denote
$$
E_f(X;p,a)=\frac{1}{(X/p)^{1/2}} \Bigl(\sum_{n\equiv
  a\mods{p}}a_f(n)w(n/X)-\frac{1}{p-1} \sum_{n\geq
  1}a_f(n)w(n/X)\Bigr),
$$
where $a_f(n)$ is the $n$-th Hecke eigenvalue of $f$. Taking
$$
X=p^{N}/\Phi(p),
$$
where $\Phi\geq 1$ is an increasing function such that $\Phi(x)\ll
x^{\eps}$ for all $\eps>0$, it was shown in~\cite{FGKM} (for $N=2$ and
$f$ holomorphic) and in~\cite{kr} (for all other cases) that the
random variables
$$
a\mapsto E_f(X;p,a)
$$
(defined on $\Fpt$ with the uniform measure) converge in law to a
Gaussian, either real (if $f$ is self-dual) or complex (if $f$ is not
self-dual).  Moreover, Lester and Yesha~\cite[Th. 1.2]{lester-yesha}
have shown that if $N=2$, one can replace the smooth weight $w(n/X)$
in the definition of $E_f(X;p,a)$ by the characteristic function of
the interval $[1,X]$.
\par
A natural question (suggested for instance by J-M. Deshouillers) is
whether this central limit theorem persists if $a$ is restricted to a
suitable subset $A_p\subset \Fpt$ (with its own uniform measure). We
explain here that this is indeed the case when $A_p$ has some
algebraic structure.

\begin{theorem}\label{th-clt}
  With notation as above, assume that $A_p$ is:
\par
\emph{(1)} Either a proper generalized arithmetic progression of
dimension $d\geq 1$ with
$$
\limsup \frac{|A_p|}{\sqrt{p}(\log p)^d}=+\infty,
$$
for instance an interval of length $\geq p^{1/2+\delta}$ for some
fixed $\delta>0$;
\par
\emph{(2)} Or the image $g(\Fp)\cap \Fpt$ for a fixed non-constant
polynomial $g\in\Zz[T]$;
\par
Then the random variables restricted to $A_p$ given by
$$
\begin{cases}
A_p\lra \Cc\\
a\mapsto E_f(X;p,a)
\end{cases}
$$
with the uniform probability measure on $A_p$ converge as $p\ra
+\infty$ to the same Gaussian limit as the random variables defined on
all of $\Fpt$.
\end{theorem}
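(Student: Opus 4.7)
The plan is to follow the method of moments from \cite{FGKM,kr}, adapted to restricted averages. For fixed integers $k,m\geq 0$, it suffices to show that the joint moment
$$
M_{k,m}(p)=\frac{1}{|A_p|}\sum_{a\in A_p}E_f(X;p,a)^k\overline{E_f(X;p,a)}^m
$$
converges to the corresponding moment of the Gaussian limit. Applying $\GL_N$ Voronoi summation to $E_f(X;p,a)$ exactly as in those references, $M_{k,m}(p)$ becomes, up to negligible error, a finite linear combination (weighted by bounded Hecke--Rankin--Selberg coefficients) of averages of the form
$$
\frac{1}{|A_p|}\sum_{a\in A_p}\prod_{i=1}^{k}K_f(\bar a n_i;p)\prod_{j=1}^{m}\overline{K_f(\bar a n'_j;p)},
$$
where $K_f$ is the trace function of an appropriately twisted hyper-Kloosterman-type sheaf of rank $N$, which is bountiful in the sense of Definition~\ref{def-gen-sheaf}. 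Each multiplication $a\mapsto\bar a n_i$ corresponds to an element $\gamma_i\in\PGL_2(\Fp)$, so the integrand is exactly of the form handled by Corollaries~\ref{cor-concrete} and~\ref{cor-concrete2}.

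For part~(1), additive Fourier expansion of $\charfun_{A_p}$ rewrites this average as
$$
\frac{1}{p}\sum_{a\in\Fp}F(a)+\frac{1}{|A_p|\,p}\sum_{h\neq 0}\widehat{\charfun_{A_p}}(h)\sum_{a\in\Fp}F(a)\,e\!\Bigl(-\frac{ha}{p}\Bigr),
$$
where $F(a)$ denotes the product of trace-function factors. The first term is, up to $O(1/p)$, the unrestricted average $\expect_{a\in\Fpt}[F(a)]$, and by \cite{FGKM,kr} this unrestricted average already produces the correct Gaussian moments up to an error $o(1)$. For each $h\neq 0$, Corollary~\ref{cor-even-kloos} or~\ref{cor-odd-kloos} yields $\bigl|\sum_a F(a)e(-ha/p)\bigr|\leq C\sqrt{p}$ \emph{regardless} of normality of the tuple $(\gamma_i)$, since $h\neq 0$ alone suffices to force the vanishing of the relevant $H^2_c$. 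The error contribution is therefore $\ll\|\widehat{\charfun_{A_p}}\|_1\sqrt{p}/(|A_p|\,p)$, and the classical $\ell^1$-bound $\|\widehat{\charfun_{A_p}}\|_1\ll p(\log p)^d$ for a proper GAP of dimension $d$ (obtained by factoring the sum over the $d$ coordinate progressions) makes this $o(1)$ precisely under the hypothesis $|A_p|/(\sqrt{p}(\log p)^d)\to\infty$.

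For part~(2), write $N_g(a)=|g^{-1}(a)\cap\Fp|$, so that $|A_p|=|\{a\in\Fpt:N_g(a)\geq 1\}|=cp+O(\sqrt p)$ for an explicit constant $c>0$ depending on the Galois group of $g(T)-a$ over $\Qq(a)$. We plan to first establish the CLT for the pushforward (i.e.\ $N_g$-weighted) measure on $A_p$, which is more tractable because averaging a function $F$ against it reduces to $\tfrac{1}{p}\sum_{t\in\Fp}F(g(t))$. After expansion, we must estimate sums of the shape
$$
\frac{1}{p}\sum_{t\in\Fp}\prod_{i}K_f\bigl(\overline{g(t)}\,n_i;p\bigr)^{\sigma_i},
$$
a sum of products attached to the pulled-back sheaves $(\gamma_i\circ g)^*\HYPK_N$. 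Because $g$ is non-constant and $\Autz(\HYPK_N)=1$ (Proposition~\ref{pr-aut-hyper}), the tuple of these pullbacks, taken modulo the natural equivalence relation, is $U$-generous on a suitable dense open $U\subset\Aa^1$, and Theorems~\ref{th-diag} and~\ref{th-diag-2} then identify the diagonal main term and give square-root cancellation elsewhere. The passage from the $N_g$-weighted measure back to the uniform measure on $A_p$ is handled by expressing $\charfun_{N_g\geq 1}$ as a polynomial in $N_g$---equivalently, in trace functions of the sheaf $g_*\bQl$---and re-applying the same machinery. The main obstacle is precisely this last monodromy bookkeeping for the tuple $((\gamma_i\circ g)^*\HYPK_N)$: one must verify that no accidental geometric isomorphism $g^*(\gamma_i^*\HYPK_N)\simeq g^*(\gamma_j^*\HYPK_N)\otimes\sheaf{L}$ is introduced by composition with $g$ for generic choices of $n_i$, which reduces to a Goursat-adaptedness analysis along the finite covering determined by $g$.
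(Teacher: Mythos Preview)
Your treatment of case~(1) is essentially the paper's argument with the two steps interchanged: the paper first expands $\charfun_{A_p}$ in additive characters (equation~(\ref{eq-ft})) and then, for each non-trivial character, invokes Theorem~\ref{th-clt2}, whose proof in turn uses Voronoi and the control-of-diagonal Theorems~\ref{th-control-1} and~\ref{th-control-2}. You instead apply Voronoi first and then Fourier-expand; since for $h\neq 0$ the corollaries give square-root cancellation for \emph{every} tuple, the diagonal bookkeeping is unnecessary on that side, which is a minor simplification. One correction: the bound $\sum_{h\neq 0}|\alpha_p(h)|\ll(\log p)^d$ for $d\geq 2$ is not obtained ``by factoring the sum over the $d$ coordinate progressions''; it is a non-trivial result of Shao~\cite{shao}, as the paper notes.

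Your case~(2) is genuinely different from the paper and has a real gap. The paper never pulls the Kloosterman sheaves back along $g$. Instead it decomposes the indicator function directly as in~(\ref{eq-artin}),
$$
\charfun_{A_p}=\sum_{i\in I}\alpha_{i,p}K_i,
$$
where the $K_i$ are trace functions of geometrically irreducible sheaves $\sheaf{G}_i$ of bounded conductor coming from the Galois representation on the fibres of $g$ (this is~\cite[Prop.~6.7]{FKM2}). The point is that this reduces case~(2) to exactly the same statement as case~(1), namely Theorem~\ref{th-clt2}: one must show that the $K_i$-weighted moments vanish for each non-trivial $K_i$. The Kloosterman sheaves stay untouched; only the ``test sheaf'' $\sheaf{G}$ changes, and Theorems~\ref{th-control-1}--\ref{th-control-2} handle an arbitrary geometrically irreducible non-trivial $\sheaf{G}$ uniformly. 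Your route via $g^*(\gamma_i^*\HYPK_N)$ forces you to check that pullback along $g$ introduces no new isomorphisms of the form $g^*(\gamma_i^*\HYPK_N)\simeq g^*(\gamma_j^*\HYPK_N)\otimes\sheaf{L}$, which you correctly identify as the obstacle but do not resolve; and your proposed passage from the $N_g$-weighted to the uniform measure by writing $\charfun_{N_g\geq 1}$ as ``a polynomial in $N_g$'' is not quite the right mechanism (the correct decomposition is into irreducible constituents of $g_*\bQl$, which is precisely the Artin-type expansion the paper uses). The paper's approach sidesteps both difficulties entirely.
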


We prove this by first writing the characteristic function of $A_p$ as
a ``short'' linear combination of trace functions, precisely either by
Fourier transform
\begin{equation}\label{eq-ft}
\mathbf{1}_{A_p}(x)=\sum_{h\in\Fp}\alpha_p(h)e\Bigl(\frac{hx}{p}\Bigr)
\end{equation}
with
$$
K_0(x)=1,\quad\quad \alpha_p(0)=\frac{|A_p|}{p}
$$
and
$$
\sum_{h\not=0}|\alpha_p(h)|\ll (\log p)^d
$$
in the first case (this bound is classical for $d=1$, and the case
$d\geq 2$ was proved by Shao~\cite{shao}), or by decomposition in
Artin-like trace functions
\begin{equation}\label{eq-artin}
\mathbf{1}_{A_p}(x)=\sum_{i\in I}\alpha_{i,p}K_i(x)
\end{equation}
in the second case, where $I$ is a finite set depending only on the
polynomial $g$, $0\in I$ with
$$
\alpha_{0,p}=\frac{|A_p|}{p}+O(p^{-1/2}),
$$
and
$$
\sum_{i\in I}|\alpha_{0,p}|\ll 1,
$$
and the $K_i$ are trace functions of pairwise geometrically
non-isomorphic sheaves $\sheaf{G}_i$ of weight $\leq 0$ modulo $p$,
with $\sheaf{G}_0$ trivial (see~\cite[Prop. 6.7]{FKM2}) and
$$
\cond(\sheaf{G}_i)\ll 1.
$$
\par
Using the method of moments, it follows easily that
Theorem~\ref{th-clt} follows from the following general result:

\begin{theorem}\label{th-clt2}
  With notation as above, let $K_p$ be trace functions modulo $p$
  which are geometrically irreducible and geometrically non-trivial,
  with conductor $\cond(K_p)\ll 1$.
\par
Let $\kappa$ and $\lambda\geq 0$ be integers. We have
$$
\lim_{p\ra +\infty} \frac{1}{p-1}\sum_{a\in\Fpt}E_f(X;p,a)^{\kappa}
\overline{E_f(X;p,a)}^{\lambda}K(a)=0.
$$
\end{theorem}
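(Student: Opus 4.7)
The plan is to reduce the estimation to the sum-of-products machinery developed above, following the strategy of \cite{FGKM,kr} but exploiting the extra twist by $K(a)$ in the averaging. First, applying $\GL_N$ Voronoi summation to $E_f(X;p,a)$ --- writing the congruence $n\equiv a\mods{p}$ by additive characters and invoking the Voronoi formula for the cusp form $f$ --- one obtains, for any fixed $A>0$, an approximation
$$
E_f(X;p,a)=\sum_{m\leq M}\beta_m\,\hypk_N(\bar a m;p)+O(p^{-A}),
$$
where $\bar a$ is the inverse of $a$ modulo $p$, the length is $M=\Phi(p)p^{\eps}$, and the coefficients $\beta_m=\beta_{m,p}$, built from Hecke eigenvalues of the dual cusp form and a smooth weight at scale $\Phi(p)$, satisfy Rankin--Selberg type bounds
$$
\sum_m|\beta_m|^2\ll p^\eps,\qquad \sum_m|\beta_m|\ll \Phi(p)^{1/2+\eps}.
$$

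Second, I would expand the $\kappa$-th power times conjugate $\lambda$-th power, interchange summations, and apply the bijection $a\mapsto 1/a$ of $\Fpt$, reducing the moment (up to a negligible error) to combinations of
$$
S(\uple{m},\uple{n};p)=\frac{1}{p-1}\sum_{a\in\Fpt}\prod_i\hypk_N(am_i;p)\prod_j\overline{\hypk_N(an_j;p)}\,K(1/a),
$$
weighted by $\prod_i\beta_{m_i}\prod_j\overline{\beta_{n_j}}$. Each $S(\uple{m},\uple{n};p)$ fits exactly in the framework of Section \ref{sec-general}: the sheaves involved are $[\times m_i]^*\HYPK_N$, $[\times n_j]^*\dual(\HYPK_N)$, and $\iota^*\sheaf{G}$, where $\iota(x)=1/x$ and $\sheaf{G}$ is the sheaf underlying $K_p$, the last being geometrically irreducible, geometrically nontrivial, and of bounded conductor.

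Third, I would invoke Theorem \ref{th-diag-2} together with Proposition \ref{pr-rh}. The Kloosterman sheaves $\HYPK_N$ are bountiful (Section \ref{sec-examples}), and after passing to representatives modulo the equivalence $m\sim\pm m$ (when $N$ is odd) or $m\sim m$ (when $N$ is even), the pullbacks $[\times m_i]^*\HYPK_N$ form a $\Gg_m$-generous tuple. Theorem \ref{th-diag-2} then shows that the relevant $H^2_c$ vanishes unless $\iota^*\sheaf{G}$ is geometrically isomorphic to an irreducible constituent of the Kloosterman tensor product, of the form $\bigotimes_\gamma\Lambda_\gamma(\gamma^*\HYPK_N)$ with each $\Lambda_\gamma$ an irreducible subrepresentation of a tensor power of the standard representation of $\SL_N$. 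For tuples $(\uple{m},\uple{n})$ where this is not the case --- the ``generic'' situation --- Proposition \ref{pr-rh} yields $|S(\uple{m},\uple{n};p)|\ll p^{-1/2}$, and the total contribution of such tuples is
$$
\ll p^{-1/2}\Bigl(\sum_m|\beta_m|\Bigr)^{\kappa+\lambda}\ll p^{-1/2}\Phi(p)^{(\kappa+\lambda)/2+\eps}=o(1).
$$

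The hardest part will be the ``exceptional'' tuples for which $\iota^*\sheaf{G}$ does embed into the Kloosterman tensor product: by rank comparison and the classification of irreducible representations of products of $\SL_N$, only specific pairings among the $m_i,n_j$ are allowed, with the rank of $\sheaf{G}$ constraining which constituents can occur. For such tuples I would separate out the ``pure diagonal'' contribution, where the Kloosterman part contains the trivial representation (matched $\uple{m}$ with $\uple{n}$): this contribution is proportional to $(p-1)^{-1}\sum_a K(1/a)$, which is $O(p^{-1/2})$ by Deligne's theorem applied to the geometrically irreducible nontrivial sheaf $\iota^*\sheaf{G}$. The remaining ``secondary'' exceptional contributions involve strictly fewer free parameters among the $m_i,n_j$, so their cardinality is smaller by a power of $\Phi(p)$, which, combined with $\sum_m|\beta_m|^2\ll p^\eps$ and Cauchy--Schwarz, keeps them also $o(1)$. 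The combinatorial bookkeeping for these exceptional tuples is essentially the same mechanism as in the moment computations of \cite{FGKM,kr}, with the extra twist by $K(a)$ supplying the missing cancellation on the diagonal via Weil's bound on $\sum_a K(a)$.
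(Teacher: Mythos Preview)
Your overall strategy matches the paper's exactly: Voronoi summation reduces $E_f(X;p,a)$ to a short sum of hyper-Kloosterman values, one expands the moment, and the resulting inner sums over $a$ are precisely the sums of products to which Section~\ref{sec-general} applies. The paper states this reduction in one sentence, citing \cite[\S 3]{FGKM} and \cite[\S 6.2, \S 7]{kr}, and then isolates the purely algebraic input as a separate theorem: square-root cancellation in
\[
\sum_{x\in\Fpt}\hypk_N(a_1x)\cdots\hypk_N(a_\kappa x)\,\overline{\hypk_N(b_1x)\cdots\hypk_N(b_\lambda x)}\,K(x)
\]
for all $(\uple{a},\uple{b})$ \emph{except} at most $C(\kappa,\lambda)p^{(\kappa+\lambda-1)/2}$ tuples. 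This exceptional-set bound is exactly what Theorems~\ref{th-control-1} and~\ref{th-control-2} supply, and the improvement from $p^{(\kappa+\lambda)/2}$ to $p^{(\kappa+\lambda-1)/2}$ uses precisely the hypothesis that $\sheaf{G}$ is geometrically non-trivial.

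Where your write-up wobbles is in the treatment of the exceptional tuples. You place the ``pure diagonal'' tuples (matched $\uple{m}$ with $\uple{n}$, so that the Kloosterman tensor product contains the trivial representation) inside the exceptional locus and then argue that their contribution collapses to $(p-1)^{-1}\sum_a K(1/a)$. But for non-trivial $\sheaf{G}$, these matched tuples are \emph{not} exceptional in your own sense: $\iota^*\sheaf{G}$ does not embed as the trivial constituent, so $H^2_c$ vanishes and they already fall under your ``generic'' case with the $p^{-1/2}$ bound. The genuinely exceptional tuples are those where $\iota^*\sheaf{G}$ embeds as a \emph{non-trivial} constituent of the Kloosterman tensor product; for those the sum is \emph{not} proportional to $\sum_a K(1/a)$, and one needs a count rather than a pointwise estimate. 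The paper's packaging via Theorems~\ref{th-control-1}/\ref{th-control-2} handles this cleanly: the structural argument you allude to (fewer free parameters forced by the embedding condition) is exactly what is proved there, and invoking those theorems directly avoids the slip. Your ``secondary exceptional'' paragraph is pointing at the right mechanism; it just needs to be the \emph{entire} treatment of the exceptional set, not a leftover after a diagonal step that is already subsumed in the generic case.
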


In turn, the method in~\cite[\S 3]{FGKM} and~\cite[\S 6.2, \S 7]{kr}
(based on the Voronoi summation formula) reduces this statement to the
following case of sums of products (where we again abbreviate
$\hypk_N(x)=\hypk_N(x;p)$):

\begin{theorem}
  Let $N\geq 2$ be an integer, and let $\kappa$, $\lambda\geq 0$ be
  integers, with $\lambda=0$ if $N$ is even. Let $p$ be a prime number
  and $K$ the trace function of a geometrically irreducible, not
  geometrically trivial, $\ell$-adic sheaf modulo $p$. We have
$$
\sum_{x\in\Fpt} \hypk_N(a_1x)\cdots \hypk_N(a_{\kappa}x)
\overline{\hypk_N(b_1x)\cdots \hypk_N(b_{\lambda}x)} K(x)\ll p^{1/2}
$$
with an implied constant depending only on $(\kappa,\lambda)$, for all
tuples $(a_i,b_j)$ in $(\Fpt)^{\kappa+\lambda}$ with at most
$$
C(\kappa,\lambda)p^{(\kappa+\lambda-1)/2}
$$
exceptions for some constant $C(\kappa,\lambda)\geq 0$ independent of
$p$.
\end{theorem}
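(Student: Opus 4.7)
The strategy is to interpret the sum as an instance of Theorem~\ref{th-diag-2} applied to dilates of $\HYPK_N$, convert the resulting vanishing of $H^2_c$ into the bound via Proposition~\ref{pr-rh}, and carefully bound the set of ``bad'' tuples where the vanishing fails.

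First I would unify the treatment by using the special involution. When $N$ is odd, a direct change of variables (or Proposition~\ref{pr-aut-hyper}(3)) gives $\overline{\hypk_N(b_jx)}=\hypk_N(-b_jx)$, so the sum takes the form
$$
T(\uple{c};K)=\sum_{x\in\Fpt}\prod_{i=1}^{\kappa+\lambda}\hypk_N(c_ix)\cdot K(x),
$$
with $c_i=a_i$ for $i\leq\kappa$ and $c_i=-b_{i-\kappa}$ otherwise (and $c_i=a_i$ throughout when $N$ is even). Let $\sheaf{K}$ denote the middle-extension sheaf of weight $0$ with trace function $K$. Letting $\uple{c}^*$ consist of one representative from each equivalence class of the $c_i$ under $c\sim c'\iff c=c'$ (for $N$ even) or $c=\pm c'$ (for $N$ odd), the tuple $\uple{\sheaf{F}}=([\times c]^*\HYPK_N)_{c\in\uple{c}^*}$ is strictly $\Gm$-generous by Examples~\ref{ex-old}(2)--(3), since $\HYPK_N$ is bountiful of $\Sp_N$- or $\SL_N$-type (with special involution $x\mapsto -x$ in the second case) as recorded in Section~\ref{sec-examples}.

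Second, Proposition~\ref{pr-rh} yields $T(\uple{c};K)\ll p^{1/2}$ as soon as the space $H^2_c(\Gm\times\bFp,\bigotimes_i[\times c_i]^*\HYPK_N\otimes\dual(\sheaf{K}))$ vanishes, and Theorem~\ref{th-diag-2} applied to $\uple{\sheaf{F}}$ with the corresponding multiplicities shows that this $H^2_c$ vanishes unless $\sheaf{K}$ is geometrically isomorphic to $\bigotimes_{c\in\uple{c}^*}\Lambda_c([\times c]^*\HYPK_N)$ for some family $(\Lambda_c)_c$ of irreducible subrepresentations of the appropriate tensor powers of $\std_N$ and $\dual(\std_N)$. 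Call a tuple $\uple{c}$ \emph{bad} if such a matching exists. Since $\sheaf{K}$ is fixed, geometrically irreducible, non-trivial and of bounded conductor, and each $\Lambda_c$ has bounded highest weight (depending only on $\kappa,\lambda,N$), only finitely many shapes $(\Lambda_c)$ are compatible with $\sheaf{K}$, and for each shape the set of bad tuples is cut out by explicit algebraic conditions coming from local and global matchings.

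Third, bounding the number of bad tuples by $C(\kappa,\lambda)p^{(\kappa+\lambda-1)/2}$ is the key step, and the main obstacle of the proof. One exploits the rigid local structure of $\HYPK_N$: its wild ramification at $\infty$ is an $N$-dimensional representation induced from the character $\sheaf{L}_{\psi(Nc^{1/N}X)}$ on an $N$-fold cover, so the isomorphism class of $\bigotimes_c\Lambda_c([\times c]^*\HYPK_N)$ at $\infty$, together with its tame behaviour at $0$, rigidly determines the multiset $\{c_i^{1/N}\}$ up to an explicit action of a finite group (roots of unity and symmetric groups permuting $c$'s with equal $\Lambda_c$). Comparing with the fixed local data of $\sheaf{K}$ constrains $(c_i)$ to an algebraic subvariety $V_{\sheaf{K}}\subset\Gm^{\kappa+\lambda}$ defined over $\bFp$. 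The delicate point is to verify that $\dim V_{\sheaf{K}}\le \lfloor(\kappa+\lambda-1)/2\rfloor$ — heuristically because a non-trivial matching of $\sheaf{K}$ against the tensor product couples the $c_i$'s into pairs, halving the degrees of freedom — so that the Lang--Weil estimate yields the claimed bound on $\Fp$-points. The combinatorial heart of this last step is the analysis of which highest weights of $\SL_N$ or $\Sp_N$ can occur in $\std^{\otimes m_c}\otimes\dual(\std)^{\otimes n_c}$ with a given total multiplicity pattern, together with tracking of the rank-one twists absorbed in the reduction of the first paragraph.
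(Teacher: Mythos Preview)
Your first two steps are correct and align with the paper: reduce to products of dilates of $\HYPK_N$, apply Theorem~\ref{th-diag-2} to the strictly $\Gm$-generous tuple of distinct dilates, and invoke Proposition~\ref{pr-rh} whenever the relevant $H^2_c$ vanishes. The paper does exactly this, citing Example~\ref{ex-old} and then Theorems~\ref{th-control-1} and~\ref{th-control-2}.

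The gap is in your third step. Your proposed route---matching wild ramification at $\infty$ to pin down $\{c_i^{1/N}\}$, cutting out a variety $V_{\sheaf{K}}$, and then applying Lang--Weil---does not, as written, yield the bound $\dim V_{\sheaf{K}}\leq \lfloor(\kappa+\lambda-1)/2\rfloor$. You assert this only heuristically (``couples the $c_i$'s into pairs, halving the degrees of freedom''), but no such pairing is forced by local data alone: the wild monodromy of $\bigotimes_c\Lambda_c([\times c]^*\HYPK_N)$ at $\infty$ is a large direct sum of characters whose structure depends intricately on the $\Lambda_c$, and matching it to the fixed local data of $\sheaf{K}$ gives a system of equations whose dimension you have not computed. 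Local isomorphism also does not imply global isomorphism in general, so even a perfect local match would not by itself classify the bad tuples.

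The paper's argument for this step is quite different and avoids ramification entirely. One fixes a single bad tuple $\uple{a}$; for any other bad tuple $\uple{b}$, Lemma~\ref{lm-intersect} (applied to the generous union of the two families of dilates) forces $\tilde{\Lambda}_b=1$ for every component $b$ of $\uple{b}$ not already among the components of $\uple{a}$. Since the trivial representation occurs in $\std^{\otimes n}$ of $\Sp_N$ only for $n$ even (resp.\ in $\std^{\otimes m}\otimes\dual(\std)^{\otimes n}$ of $\SL_N$ only when $N\mid m-n$), every such ``new'' component of $\uple{b}$ has multiplicity $\geq 2$, so there are at most $(\kappa+\lambda)/2$ of them; the components shared with $\uple{a}$ contribute only boundedly many choices. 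This gives $\ll p^{(\kappa+\lambda)/2}$ bad tuples, and a short extra argument (still using only representation theory of $G$, not ramification) sharpens this to $p^{(\kappa+\lambda-1)/2}$ when $\sheaf{K}$ is geometrically non-trivial. This is the content of Theorems~\ref{th-control-1} and~\ref{th-control-2}, and it is what you should invoke in place of your variety-counting sketch.
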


Because of Examples~\ref{ex-old} (1) (for $N$ even) and~\ref{ex-old}
(2) (for $N$ odd), this statement follows immediately from
Theorems~\ref{th-control-1} and~\ref{th-control-2} in the next
section combined with Proposition~\ref{pr-rh}.

\section{A case of control of the diagonal}\label{sec-control}

The classification of diagonal cases of the previous section is
usually accompanied in applications by results dealing with these
diagonal cases.  Here is one typical instance, in the situation of
Example~\ref{ex-old}(2), which is the type of results used
in~\cite{FGKM} and~\cite{kr} (as explained in the previous section):

\begin{theorem}\label{th-control-1}
  Let $\sheaf{F}_0$ be a lisse $\ell$-adic sheaf on $\Gg_{m}$ over
  $\Fp$, which is pointwise pure of weight $0$ and self-dual with
  geometric monodromy group $G$ such that $G^0=\Sp_{r}$, and such that
$$
\Autz(\sheaf{F}_0)\cap \Tt=1,
$$
where $\Tt$ is the diagonal torus in $\PGL_2$.
\par
Fix a geometrically irreducible sheaf $\sheaf{G}$ lisse on a dense
open subset $U\subset \Gg_{m,\Fp}$ and a positive integer $k\geq
1$. The number of $k$-tuples $\uple{a}$ of elements of $\Fpt$ such
that
$$
H^2_c(U\times\bFp,\bigotimes_{a\in\uple{a}}[\times
a]^*\sheaf{F}_0\otimes\dual(\sheaf{G}))\not=0
$$
is bounded by $Cp^{k/2}$, where $C\geq 0$ is a constant depending only
on $k$. If $\sheaf{G}$ is geometrically non-trivial, the bound can be
improved to $Cp^{(k-1)/2}$.
\end{theorem}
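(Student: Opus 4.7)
The plan is to feed the non-vanishing of $H^2_c$ into the diagonal classification of Theorem~\ref{th-diag}, extract a rigid structural constraint on the tuple $\uple{a}$, and then count the tuples satisfying it. Let $\uple{a}^{*}$ be the set of distinct values appearing in $\uple{a}$, with multiplicity $n_a\geq 1$ for $a\in\uple{a}^{*}$, so that $\sum_{a\in\uple{a}^{*}}n_a=k$. Because $\Autz(\sheaf{F}_0)\cap\Tt=1$, no element $[\times a/a']\in\Tt$ with $a\neq a'$ lies in $\Autz(\sheaf{F}_0)$, so the sheaves $[\times a]^{*}\sheaf{F}_0$ for $a\in\uple{a}^{*}$ form a strictly $\Gg_m$-generous tuple essentially as in Example~\ref{ex-old}(2). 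Applying Theorem~\ref{th-diag} on a dense open $U_0\subset U$ on which all the relevant sheaves are lisse, the non-vanishing of $H^2_c$ forces the existence of a geometric isomorphism
$$
\sheaf{G}_{|U_0}\simeq \bigotimes_{a\in\uple{a}^{*}}\Lambda_a([\times a]^{*}\sheaf{F}_0),
$$
where each $\Lambda_a$ is an irreducible representation of $\Sp_r$ occurring as a subrepresentation of $\std^{\otimes n_a}$.

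Next, I partition $\uple{a}^{*}=S\sqcup T$ with $S=\{a\,\mid\,\Lambda_a\neq 1\}$. Since, for $\Sp_r$, the trivial representation appears in $\std^{\otimes n}$ if and only if $n$ is even, each $a\in T$ must satisfy $n_a$ even, and the isomorphism collapses to
$$
\sheaf{G}_{|U_0}\simeq \bigotimes_{a\in S}\Lambda_a([\times a]^{*}\sheaf{F}_0).
$$
In particular $\sheaf{G}$ lies in the Tannakian category generated by $([\times a]^{*}\sheaf{F}_0)_{a\in S}$. Applying Lemma~\ref{lm-intersect} to any two valid tuples shows that $(S,(\Lambda_a)_{a\in S})$ is in fact an invariant of $\sheaf{G}$: the uniqueness of the tensor decomposition in the Tannakian category of the strictly generous union tuple pins down both the support $S$ and the representations $\Lambda_a$. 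In particular, $s:=|S|$ is well-defined from $\sheaf{G}$, and trivially $s\leq k$ whenever some valid tuple exists.

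Now I count. A valid $\uple{a}$ is specified by: (i) multiplicities $(n_a)_{a\in S}$ of the correct parity for $\Lambda_a\subset\std^{\otimes n_a}$, with total $k_1:=\sum_{a\in S}n_a\leq k$; (ii) an allocation of the $k$ positions of $\uple{a}$ to the $s$ values in $S$ and the remaining $k_2=k-k_1$ positions to $T$; (iii) a choice of values in $\Fpt\setminus S$ for the $T$-positions, each appearing with even multiplicity. Steps (i) and (ii) contribute an $O_k(1)$ factor. For (iii), since each value in $T$ occupies at least $2$ positions, at most $k_2/2$ distinct values appear in $T$, yielding at most $O_k(p^{k_2/2})$ tuples. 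Hence the total number of valid $\uple{a}$ is $\ll p^{k_2/2}\leq p^{(k-s)/2}$.

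If $\sheaf{G}$ is geometrically trivial then $s=0$ and we obtain $Cp^{k/2}$. If $\sheaf{G}$ is geometrically non-trivial, any valid decomposition involves at least one non-trivial $\Lambda_a$, forcing $s\geq 1$ and the sharper bound $Cp^{(k-1)/2}$. The main obstacle to flesh out is the uniformity of the implied constant: one must verify that the set of irreducible subrepresentations of $\std^{\otimes n}$ of $\Sp_r$ is finite with cardinality bounded in terms of $n$ and $r$, and that the Tannakian invariance step genuinely singles out a unique $(S,(\Lambda_a)_{a\in S})$ across all valid tuples --- both are routine but require the strict-generosity setup to be handled cleanly.
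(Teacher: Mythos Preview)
Your argument is correct and follows essentially the same strategy as the paper: apply Theorem~\ref{th-diag}, use Lemma~\ref{lm-intersect} to compare the decompositions coming from different valid tuples, and count via the even-multiplicity constraint on the factors where $\Lambda_a$ is trivial. The paper fixes one valid tuple $\uple{a}$ as a reference and compares every other $\uple{b}$ to it, whereas you extract an intrinsic invariant $(S,(\Lambda_a)_{a\in S})$ of $\sheaf{G}$ up front; this is a mild reorganization of the same idea.

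One point you gloss over: you assert that the tuple $([\times a]^{*}\sheaf{F}_0)_{a\in\uple{a}^{*}}$ is \emph{strictly} generous by citing Example~\ref{ex-old}(2), but that example assumes $G=\Sp_r$, while the theorem only assumes $G^0=\Sp_r$. In fact self-duality forces $G=G^0$ here---the geometric isomorphism $\sheaf{F}_0\simeq\dual(\sheaf{F}_0)$ yields a $G$-invariant nondegenerate bilinear form, whose restriction to $G^0=\Sp_r$ is, by Schur, a scalar multiple of the symplectic form, so $G\subset\Sp_r$ and hence $G=\Sp_r$---but you should say so explicitly. The paper does not use this observation; it works only with generosity and, for the refined bound when $\sheaf{G}$ is nontrivial, must separately dispose of the possibility that $\pi^{*}\sheaf{G}$ is trivial by arguing that $\End\bigl(\bigotimes_i[\times b_i]^{*}\sheaf{F}_0\bigr)$ contains no nontrivial one-dimensional subrepresentation. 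Your route avoids this extra step entirely once the connectivity of $G$ is justified.
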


\begin{proof}[Proof of Theorem~\ref{th-control-1}]
  We assume that there is at least one such $k$-tuple $\uple{a}$,
  since otherwise the bound is obvious. We then \emph{fix} such a
  tuple. 
\par
Then, let $\uple{a}^*$ denote the primitive tuple of distinct elements
of $\uple{a}$, and consider the tuple of sheaves
$\uple{\sheaf{F}}=([\times a]^*\sheaf{F}_0)_{a\in\uple{a}^*}$
restricted to $U$. By Example~\ref{ex-old}(2), it is
$U$-generous. Moreover, if $n_a\geq 1$ denotes the multiplicity of
$a\in\uple{a}^*$ in the tuple $\uple{a}$, we have
$$
\bigotimes_{a\in\uple{a}}[\times a]^*\sheaf{F}_0=
\bigotimes_{a\in\uple{a}^*}([\times a]^*\sheaf{F}_0)^{\otimes n_a}
=\sheaf{F}_{\uple{n}}
$$
with the notation of Theorem~\ref{th-diag}.
\par
By this theorem, the assumption that
$$
H^2_c(U\times\bFp,\bigotimes_{a\in\uple{a}}[\times
a]^*\sheaf{F}_0\otimes\dual(\sheaf{G}))\not=0
$$
therefore implies that there is a geometric isomorphism 
$$
\pi_{\uple{a}}^*\sheaf{G}\simeq \bigotimes_{a\in
  \uple{a}^*}\Lambda_a\Bigl(\pi_{\uple{a}}^*[\times a]^*\sheaf{F}_0\Bigr),
$$
of lisse sheaves, where $V\fleche{\pi_{\uple{a}}} U$ is a finite
abelian \'etale covering and where $\Lambda_a$ is some irreducible
representation of the group $G^0=\Sp_r$ such that $\Lambda_a$ is an
irreducible subrepresentation of the representation $\std^{\otimes
  n_a}$ of $G^0$.
\par
Now let $\uple{b}\not=\uple{a}$ be any $k$-tuple such that
$$
H^2_c(U\times\bFp,\bigotimes_{b\in\uple{b}}[\times b]^*\sheaf{F}_0
\otimes\dual(\sheaf{G}))\not=0,
$$
and let $\uple{b}^*$ denote the tuple of distinct elements of
$\uple{b}$. We then also have
$$
\pi_{\uple{b}}^*\sheaf{G}\simeq \bigotimes_{b\in
  \uple{b}^*}\tilde{\Lambda}_b\Bigl(\pi_{\uple{b}}^*[\times
b]^*\sheaf{F}_0\Bigr)
$$
for some representations $\tilde{\Lambda}_b$ of $G^0$ such that
$\tilde{\Lambda}_b$ is an irreducible subrepresentation of the
representation $\std^{\otimes n_b}$ of $G^0$.
By Lemma~\ref{lm-intersect} (after pulling back to the union of
$\uple{a}^*$ and $\uple{b}^*$), it follows that if we partition
$\uple{b}^*\sim (\uple{c},\uple{d})$ where $\uple{c}$ is the primitive
tuple of elements common to $\uple{a}$ and $\uple{b}$, and $\uple{d}$
is the rest, then we have
\begin{equation}\label{eq-condition}
\tilde{\Lambda}_b=1\text{ for } b\in\uple{d}. 
\end{equation}
\par
We can partition any tuple $\uple{b}$ uniquely (up to order) as
$\uple{b}\sim (\uple{c}',\uple{d}')$ where $\uple{c}'$ has an
associated primitive tuple $\uple{c}$ which is a subtuple of
$\uple{a}^*$. We will count the number of possibilities for $\uple{b}$
to satisfy the non-vanishing condition by estimating the possibilities
for $\uple{c}'$ and $\uple{d}'$ separately.
\par
We first claim that the number of possible $\uple{c}'$ is bounded in
terms of $k$ only. Indeed, the number of possible primitive $\uple{c}$
is so bounded, simply because it is a subtuple of $\uple{a}^*$, and
for each fixed $\uple{c}$, the multiplicities allowed in $\uple{c}'$
for the components $c\in\uple{c}$ are at most $k$, so that the number
of $\uple{c}'$ is also bounded in terms of $k$ only.
\par
Now consider the potential $k$-tuples $\uple{b}=(\uple{c}',\uple{d}')$
where $\uple{c}$ is a fixed subtuple of
$\uple{a}^*$. From~(\ref{eq-condition}), the multiplicity $n_b\geq 1$
of any $b\in\uple{d}'$ is constrained by the condition that the
trivial representation is a subrepresentation of $\std^{\otimes
  n_b}$. In other words, since $G^0=\Sp_r$, the multiplicity must be
even, hence $\geq 2$. In particular, the size of the associated
primitive tuple $\uple{d}$ is at most $k/2$, and the number of
possibilities for $\uple{d}'$ is at most $p^{k/2}$ for any given
$\uple{c}'$.
\par
Combining these two bounds, we conclude, as claimed, that the number
of possible tuples $\uple{b}$ is $\leq C(k)p^{k/2}$. For the more
precise estimate when $\sheaf{G}$ is geometrically non-trivial, note
first that if the monodromy group $G$ is connected, then the tuple
$\uple{c}$ must be of size $\geq 1$ if $\sheaf{G}$ is geometrically
non-trivial, so that the bound for the size of $\uple{d}$
becomes $\leq (k-1)/2$ instead of $\leq k/2$. Thus only cases where
$G\not=G^0$ need be considered.
\par
Similarly, we are done unless $\uple{c}$ is empty, which means unless
$\pi_{\uple{a}}^*\sheaf{G}$ is trivial. This can only happen if the
rank of $\sheaf{G}$ is one. By the above, the tuples $\uple{b}$ that
may occur must have even multiplicity (in particular, $k$ is
even). The number of these where the associated primitive tuple has
size $<k/2$ is $\ll p^{(k-1)/2}$, so there only remains to estimate
the number of those of the form
\begin{equation}\label{eq-b-mirror}
\uple{b}=(b_1,b_1,b_2,b_2,\ldots,b_{k/2},b_{k/2})
\end{equation}
where the $b_i$ are distinct.  Then
$$
\bigotimes_{b\in\uple{b}}[\times b]^*\sheaf{F}_0\simeq 
\End\Bigl(\bigotimes_{i}[\times b_i]^*\sheaf{F}_0\Bigr).
$$
\par
By Lemma~\ref{lm-gkr}, the sheaf
$$
\bigotimes_{i}[\times b_i]^*\sheaf{F}_0
$$
is geometrically irreducible. In fact, if $G_{\uple{b}}$ denotes its
geometric monodromy group, the restriction of the corresponding
representation $\rho_{\uple{b}}$ to $G_{\uple{b}}^0$ is
irreducible. It follows that $\End(\rho_{\uple{b}})$ does not contain
any non-trivial one-dimensional character: indeed, each such character
is trivial on $G_{\uple{b}}^0$ (because the latter is semisimple), and
therefore the number of one-dimensional subrepresentations of
$\End(\rho_{\uple{b}})$ (with multiplicity) is at most equal to the
number of trivial subrepresentations of its restriction to
$G_{\uple{b}}^0$, which is equal to $1$ by Schur's Lemma. Since the
trivial representation occurs in $\End(\rho_{\uple{b}})$, there can be
no other character.
\par
This argument shows that, if $\sheaf{G}$ is a geometrically
non-trivial character, then no $\uple{b}$ of the
form~(\ref{eq-b-mirror}) with distinct $b_i$'s has the property that
$$
H^2_c(U\times\bFp,\bigotimes_{b\in\uple{b}}[\times
b]^*\sheaf{F}_0\otimes\dual(\sheaf{G}))\not=0
$$
and this concludes the proof.
\end{proof}

\begin{remark}
  (1) This bound is in general best possible, as the following example
  shows: take $k$ odd, and $\sheaf{G}=\sheaf{F}_0$ where $\sheaf{F}_0$
  has monodromy equal to $\Sp_r$. Then all
$$
\uple{a}=(1,a_2,a_2,\ldots, a_{(k-1)/2},a_{(k-1)/2})
$$
with $(a_2,\ldots, a_{(k-1)/2})$ taken in $U(\Fp)$ satisfy the desired
non-vanishing. The number of such tuples is $\sim p^{(k-1)/2}$ for $p$
large (provided $\Pp^1-U$ has bounded size).  However, as we will see,
the $k$-tuples that arise can be classified to some extent, and in
many cases, better bounds can be obtained.
\par
(2) The result contrasts strongly with some cases where a tuple of sheaves
is constructed from a sheaf $\sheaf{F}_0$ in such a way that it is not
generous: for instance, take $\sheaf{F}_0=\sheaf{L}_{\psi(X^{-1})}$ on
$\Gg_m$; then for any $k$-tuple $\uple{a}$, we have
$$
\bigotimes_{i}[\times a_i]^*\sheaf{F}_0\simeq
\sheaf{L}_{\psi(f_{\uple{a}}(X))},
$$
where
$$
f_{\uple{a}}(X)=\Bigl(\sum_{i}{\frac{1}{a_i}}\Bigr)\frac{1}{X},
$$
and if we take simply $\sheaf{G}=1$, we find that all $k$-tuples with
$$
\frac{1}{a_1}+\cdots+\frac{1}{a_k}=0
$$
satisfy
$$
H^2_c(\Gg_m\times\bFp,\bigotimes_{i}[\times a_i]^*\sheaf{F}_0)\not=0.
$$
\par
Obviously, the number of these tuples is about $p^{k-1}$, which is
larger (for $k\geq 3$) than in the generous case.
\end{remark}

Another case which is proved in a similar manner is:

\begin{theorem}\label{th-control-2}
  Let $\sheaf{F}_0$ be a lisse $\ell$-adic sheaf on $\Gg_{m}$ over
  $\Fp$, which is pointwise pure of weight $0$ and with geometric
  monodromy group $G$ such that $G^0=\SL_{r}$ with $r\geq 3$, and such
  that the projective automorphism group of $\sheaf{F}_0$ is trivial.
\par
Fix a geometrically irreducible sheaf $\sheaf{G}$ lisse on a dense
open subset $U\subset \Gg_{m,\Fp}$ and positive integers $k\geq 0$ and
$l\geq 0$ with $k+l\geq 1$. The number of pairs $(\uple{a},\uple{b})$
of $k$-tuples $\uple{a}$ and $l$-tuples $\uple{b}$ of elements of
$\Fpt$ such that
$$
H^2_c(U\times\bFp,\bigotimes_{a\in\uple{a}}[\times
a]^*\sheaf{F}_0\otimes\bigotimes_{b\in\uple{b}}[\times
b]^*\dual(\sheaf{F}_0) \otimes\dual(\sheaf{G}))\not=0
$$
is bounded by $C(k,l)p^{(k+l)/2}$, where $C(k,l)\geq 0$ is a constant
depending only on $k$ and $l$ only. If $\sheaf{G}$ is geometrically
non-trivial, the bound can be improved to $C(k,l)p^{(k+l-1)/2}$.
\end{theorem}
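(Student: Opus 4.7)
The plan is to adapt the proof of Theorem~\ref{th-control-1} to the $\SL_r$-type setting. Assume at least one pair $(\uple{a}_0,\uple{b}_0)$ satisfies the non-vanishing condition (otherwise the bound is trivial), and fix it. Form the relevant generous tuple from the distinct sheaves appearing in $\{[\times a]^*\sheaf{F}_0\}_{a\in\uple{a}_0^*}\cup\{[\times b]^*\dual(\sheaf{F}_0)\}_{b\in\uple{b}_0^*}$. If $\sheaf{F}_0$ has no special involution, Example~\ref{ex-old}(3) shows that these sheaves are pairwise inequivalent up to rank~$1$ twists (including ruling out duality relations), so this collection is $U$-generous. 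If a special involution $\xi$ exists, then $[\times b]^*\dual(\sheaf{F}_0)\simeq ([\times b]\circ\xi)^*\sheaf{F}_0\otimes\sheaf{M}_b$ for some rank~$1$ sheaf $\sheaf{M}_b$; I group each $[\times a]^*\sheaf{F}_0$-factor with the $[\times b]^*\dual(\sheaf{F}_0)$-factors satisfying $a=\xi(b)$, extract the total rank~$1$ twist into an auxiliary sheaf $\sheaf{L}_0$, and obtain a $U$-generous tuple indexed by equivalence classes of translates modulo the involution.

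Now let $(\uple{a},\uple{b})$ be any pair with non-vanishing $H^2_c$. After the same reduction, invoking Theorem~\ref{th-diag-2} with $\dual(\sheaf{G})$ replaced by $\dual(\sheaf{G}\otimes\sheaf{L}_0)$ yields a geometric isomorphism
\begin{equation*}
\pi^*(\sheaf{G}\otimes\sheaf{L}_0) \simeq \bigotimes_c \Lambda_c\bigl(\pi^*[\times c]^*\sheaf{F}_0\bigr),
\end{equation*}
where $c$ runs over distinct representatives of $\uple{a}^*\cup\uple{b}^*$ modulo the involution (if present), and each $\Lambda_c$ is an irreducible subrepresentation of $\std^{\otimes m_c}\otimes\dual(\std)^{\otimes n_c}$, with $m_c$ (resp.~$n_c$) the total multiplicity with which $c$ appears as an $\sheaf{F}_0$-factor (resp.~a $\dual(\sheaf{F}_0)$-factor) in $(\uple{a},\uple{b})$. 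Applying Lemma~\ref{lm-intersect} to the fixed and varying pairs forces $\Lambda_c$ to be the trivial representation for every $c$ not appearing in $\uple{a}_0\cup\uple{b}_0$. Since the trivial representation embeds in $\std^{\otimes m}\otimes\dual(\std)^{\otimes n}$ of $\SL_r$ if and only if $r\mid m-n$, and since $r\geq 3$ combined with $m+n\geq 1$ forbids $m+n=1$, every new $c$ must satisfy $m_c+n_c\geq 2$.

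The counting is then immediate: at most $(k+l)/2$ distinct new representatives can occur, each taking one of at most $p$ values in $\Fpt$, giving $\leq C(k,l)p^{(k+l)/2}$ configurations; the combinatorial factor $C(k,l)$ absorbs both the distribution of multiplicities and the matching of remaining components of $(\uple{a},\uple{b})$ with elements of $\uple{a}_0\cup\uple{b}_0$.

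For the refinement when $\sheaf{G}$ is geometrically non-trivial, I follow the end of the proof of Theorem~\ref{th-control-1}. If the monodromy group $G$ is connected, non-triviality of $\sheaf{G}$ forces at least one $c$ to lie in $\uple{a}_0\cup\uple{b}_0$, dropping the count by one dimension. When $G$ is disconnected, only the case where $\pi^*\sheaf{G}$ is trivial (so $\sheaf{G}$ has rank~$1$) remains, and the ``balanced'' configurations with $m_c=n_c$ on all new~$c$ make the tensor product over new representatives geometrically isomorphic (up to $\sheaf{L}_0$) to $\bigotimes_c\End([\times c]^*\sheaf{F}_0)$, which by Schur's lemma applied to the $G^0$-irreducible restriction contains no non-trivial one-dimensional subrepresentation, precluding a non-trivial rank~$1$ $\sheaf{G}$. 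The main obstacle is the careful bookkeeping of the rank~$1$ twist $\sheaf{L}_0$ produced by the special-involution folding and its compatibility with Lemma~\ref{lm-intersect}, together with verifying that each ``new'' index really contributes total multiplicity at least~$2$ once the involution-pairing is taken into account.
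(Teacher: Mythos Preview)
Your overall strategy---fix a base pair, invoke Theorem~\ref{th-diag-2}, apply Lemma~\ref{lm-intersect} to force $\Lambda_c$ trivial on new indices, then use the $\SL_r$ criterion $r\mid m_c-n_c$ to get $m_c+n_c\geq 2$---is exactly the paper's approach, which is simply Theorem~\ref{th-control-1} with the invariant criterion for $\Sp_r$ replaced by that for $\SL_r$.

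However, your construction of the generous tuple is both over-complicated and slightly wrong. You propose to take as basic sheaves the collection $\{[\times a]^*\sheaf{F}_0\}\cup\{[\times b]^*\dual(\sheaf{F}_0)\}$, and then spend effort folding along a possible special involution. But if some $c$ appears in both $\uple{a}$ and $\uple{b}$, then $[\times c]^*\sheaf{F}_0$ and $[\times c]^*\dual(\sheaf{F}_0)$ are literally dual to each other, so condition~(4) of the generous definition fails for that pair regardless of any special involution. Thus your tuple is not generous as stated.

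The clean fix, and what the paper intends, is to take the generous tuple to be $([\times c]^*\sheaf{F}_0)_c$ indexed by the \emph{distinct} elements $c$ of $\uple{a}\cup\uple{b}$, and then apply Theorem~\ref{th-diag-2} directly with $m_c$ the multiplicity of $c$ in $\uple{a}$ and $n_c$ its multiplicity in $\uple{b}$. Example~\ref{ex-old}(3) already shows this tuple is $U$-generous under the sole hypothesis $\Autz(\sheaf{F}_0)=1$: the duality clause of condition~(4) is handled there by the observation that the standard representation of $\SL_r$ is not self-dual, so no special-involution bookkeeping (and no auxiliary $\sheaf{L}_0$) is needed at all. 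With this setup your counting argument and the refinement for non-trivial $\sheaf{G}$ go through unchanged.
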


In the proof, the main difference with the previous case is that the
condition that the trivial representation be a subrepresentation of
$\std^{\otimes n}\otimes \dual(\std^{\otimes m})$ of $\SL_r$ is that
$r\mid n-m$, as recalled in the proof of Theorem~\ref{th-main1}.

\section{How to use the results}\label{sec-howto}

We explain here quite informally how an analytic number theorist might
go about using the results of this paper concretely.  In particular,
we will attribute to trace functions $K$ some properties which
properly are only defined for sheaves (e.g., irreducibility).
\par
We assume that a concrete problem gives rise to a sum
$$
\sum_{x\in\Fp} K_1(x)^{\sigma_1}K_2(x)^{\sigma_2}\cdots
K_k(x)^{\sigma_k}\overline{M(x)}
$$
where the $K_i$ and $M$ are some functions defined on $\Fp$ and
$K_i(x)^{\sigma_i}$ is either $K_i(x)$ or $\overline{K_i(x)}$. The
question is to estimate this sum, and the main variable should be $p$,
which will tend to infinity.
\par
To handle this sum, one should first check whether it is of the type
described in the introduction, that is, whether
$K_i(x)=K(\gamma_i\cdot x)$ for some elements of $\PGL_2(\Fp)$ and
some fixed function $K$.  If this is the case, we suggest steps in the
next subsection, and otherwise in the following one.
\par
This ``howto'' may lead to a proof that the sum under investigation
has square-root cancellation; it may also simply suggest whether this
is the case or not, leaving some algebraic confirmations for a
rigorous proof. In any case, it should help clarify the situation.

\subsection{Sums of products with fractional linear transformations}

We assume here that $K_i(x)=K(\gamma_i\cdot x)$.  The following steps
may then help, where any negative answer to the questions means that
one should look at the more general case of the next subsection:
\begin{enumerate}
\item Is the function $K$ a trace function of weight $0$ over $\Fp$,
  and is $M(x)=e(hx/p)$ for some $h\in\Fp$?  To answer this, one can
  very often just refer to lists of examples of trace functions, and
  to their formal stability properties to construct new ones from
  known trace functions; the weight $0$ condition can often be
  obtained by normalization.
\item Assuming a positive answer to the previous question, one should
  then estimate the conductors of $K$ and $M$; this is often an easy
  matter, and the most relevant issue is that the conductor should be
  bounded independently of $p$ in order to get a good estimate from
  the Riemann Hypothesis.
\item What is the geometric monodromy group $G$ of $K$? This will
  usually be the most delicate part, and one should rely mostly on the
  examples accumulated in the many works of Katz (for
  instance~\cite{katz-gkm,katz-esde,katz-sarnak}). If $G$ is neither
  $\SL_r$ nor $\Sp_r$, one should go to the general setting of the
  next subsection.
\item Assuming that $G$ is either $\SL_r$ or $\Sp_r$, what is the
  projective automorphism group $\Gamma$ of $K$ (defined
  in~(\ref{eq-autz}))? Concretely, even if this is not entirely
  equivalent, what are the elements $\gamma\in\PGL_2(\Fp)$ such that
$$
K(\gamma\cdot x)=\lambda(x) K(x)
$$
for all $x\in \Fp$?  Is $\Gamma$ trivial?  Although this computation
is usually much easier than that of $G$, it may not be easy to find an
answer in the literature because this group has not been computed as
systematically as the geometric monodromy group.
\item Assuming $\Gamma$ is trivial, and $G$ is $\SL_r$, does $K$ have
  a special involution, i.e., roughly speaking, does there exist an
  involution $\xi$ such that 
$$
K(\xi\cdot x)=\lambda(x)\overline{K(x)}
$$
with $|\lambda(x)|=1$ for all $x$?  (For instance, $\xi\cdot x=1/x$ or
$\xi\cdot x=-x$ are the most common).
\item If one knows the answer to these questions, then
  Corollary~\ref{cor-concrete} gives (almost) a characterization of
  when the sum has square-root cancellation, uniformly in $p$, since
  $K$ is then the trace function of a bountiful sheaf (up to maybe
  tweaking $K$ at a bounded number of points to reduce to a
  middle-extension sheaf).
\end{enumerate}

\subsection{General sums of products}

We assume here that the sum to handle is not of the type
$K_i(x)=K(\gamma_i\cdot x)$ with $M(x)=e(hx/p)$.  The following may
then help to apply our general results:
\begin{enumerate}
\item Are the functions $K_i$ trace functions over $\Fp$?  To answer
  this, one can very often just refer to lists of examples of trace
  functions, and to their formal stability properties to construct new
  ones from known trace functions.
\item Is $M$ a trace function?  If yes is it geometrically
  irreducible?  If the answer is ``no'', can one decompose $M$ as a
  combination of geometrically irreducible trace functions (as
  in~(\ref{eq-ft}) or~(\ref{eq-artin})) $M_j$?  If yes, then the sums
  with each $M_j$ should be studied;
\item Assuming $K_i$ and $M$ are trace functions, $M$ geometrically
  irreducible, one should then estimate the conductors of these trace
  functions; this is often an easy matter, and the most relevant issue
  is that the conductor should be bounded independently of $p$ in
  order to get a good estimate from the Riemann Hypothesis.
\item What are the geometric monodromy groups of the $K_i$, and their
  connected component of the identity? Are they ``big''? As already
  indicated, this is often delicate, because on the one hand rather
  precise information is needed, and on the other hand, determining
  this group in a ``new'' case is most often rather deep and difficult
  to handle by hand if one does not find the result in the works of
  Katz. If one knows the geometric monodromy groups, then one should
  check whether the connected component of the identity belongs to the
  list of groups in Section~\ref{ssec-general}.  If not (especially
  for $\SO_{2r}$), then some new argument is probably needed.
\item Assuming all geometric monodromy groups fit the list, do there
  exist $i\not=j$ such that~(\ref{eq-gkr-cond}) holds? In practice,
  this means, does there exist $i\not=j$ such that
\begin{equation}\label{eq-repeat}
K_i(x)=\lambda(x)K_j(x),\text{ or } \overline{K_i(x)}=\lambda(x)K_j(x)
\end{equation}
for all $x$, where $|\lambda(x)|=1$? This might be a delicate matter
to settle, but usually such identities are either obvious or do not
exist (it is also often possible to investigate this possibility
experimentally).
\item If one finds such a pair, say $(i_0,j_0)$, then one should
  replace $K_{i_0}(x)$ by $K_{j_0}(x)$ or $\overline{K_{j_0}(x)}$ and
  increase the multiplicity of $K_{j_0}$ or its dual; then one repeats
  the last two steps until the sum is expressed as
$$
\sum_{x} \prod_{i\in I}K_i(x)^{m_i}\overline{K_i(x)}^{\, n_i}\overline{M(x)}
$$
where $m_i+n_i\geq 1$ and, among the $K_i$ for $i\in I$, no
``repetition'' as in~(\ref{eq-repeat}) occurs.
\item At this point, the result of Section~\ref{sec-general} apply to
  the family $(K_i)_{i\in I}$; thus Theorems~\ref{th-diag} (when all
  $n_i=0$) or~\ref{th-diag-2} are applicable, and give a sufficient
  condition for square-root cancellation, in terms of $M$.  This
  criterion may be difficult to exploit, but if all geometric
  monodromy groups are connected, it means that $M$ splits as a
  product
$$
M(x)=\prod_{i\in I}M_i(x)
$$
such that \emph{all} the sums
$$
\sum_x K_i(x)^{m_i}\overline{K_i(x)}^{\, n_i}\overline{M_i(x)}
$$
are large. This might again be somewhat delicate to exclude without
algebraic tools, but should help get an intuitive understanding of
what is true about the original sum.
\end{enumerate}

\end{document}